\newcommand{\defi}[1]{\textsf{#1}} 
\newcommand{\cc}{\mathbb{C}}
\newcommand{\PP}{\mathbb{P}}
\newcommand{\pp}{\mathbb{P}}
\newcommand{\kbar}{{\bar{k}}}
\newcommand{\calO}{\mathcal{O}}
\renewcommand{\O}{\mathcal{O}}
\newcommand{\EE}{{\mathscr{E}}}
\DeclareMathOperator{\Char}{char}
\DeclareMathOperator{\Gal}{Gal}
\DeclareMathOperator{\Pic}{Pic}
\DeclareMathOperator{\supp}{supp}
\DeclareMathOperator{\Sym}{Sym}
\newcommand{\gon}{\operatorname{gon}}
\newcommand{\Span}{\operatorname{Span}}
\newcommand{\ev}{\operatorname{ev}}
\newcommand{\Ueno}{\operatorname{Ueno}}
\newcommand{\dendegs}{\delta}
\newcommand{\potdendegs}{\wp}
\newtheorem{theorem}{Theorem}[section]
\newtheorem{lemma}[theorem]{Lemma}
\newtheorem{corollary}[theorem]{Corollary}
\newtheorem{proposition}[theorem]{Proposition}
\theoremstyle{definition}
\newtheorem{definition}[theorem]{Definition}
\newtheorem{question}[theorem]{Question}
\newtheorem{mproblem}[theorem]{Main Problem}
\newtheorem{problem}[theorem]{Problem}
\newtheorem{setup}[theorem]{Setup}
\newtheorem{conjecture}[theorem]{Conjecture}
\theoremstyle{remark}
\newtheorem{remark}[theorem]{Remark}
\title{Subspace configurations and low degree points on curves}
\author{Borys Kadets}
\address{Borys Kadets, Einstein Institute of Mathematics\\ Hebrew University of Jerusalem\\ Israel}
\email{kadets.math@gmail.com}
\urladdr{\url{http://bkadets.github.io}}
\author{Isabel Vogt}
\address{Isabel Vogt, Department of Mathematics \\ Brown University \\ Providence, RI 02906}
\email{ivogt.math@gmail.com}
\urladdr{\url{https://www.math.brown.edu/ivogt}}
\date{\today}
\begin{document}
\begin{abstract}
    This paper is devoted to understanding curves $X$ over a number field $k$ that possess infinitely many solutions in extensions of $k$ of degree at most $d$; such solutions are the titular low degree points. For $d=2,3$ it is known (\cite{Harris-Silverman1991}, \cite{Abramovich-Harris1991}) that such curves, after a base change to $\overline{k},$ admit a map of degree at most $d$ onto $\PP^1$ or an elliptic curve. For $d \geqslant 4$ the analogous statement was shown to be false \cite{Debarre-Fahlaoui1993}. We prove that once the genus of $X$ is high enough, the low degree points still have geometric origin: they can be obtained as pullbacks of low degree points from a lower genus curve. We introduce a discrete-geometric invariant attached to such curves: a family of subspace configurations, with many interesting properties. This structure gives a natural alternative construction of curves from \cite{Debarre-Fahlaoui1993}. As an application of our methods, we obtain a classification of such curves over $k$ for $d=2,3$, and a classification over $\kbar$ for $d=4,5$.
\end{abstract}
\maketitle

\section{Introduction}

Suppose $k$ is a number field and $X/k$ is a \defi{nice} curve (smooth, projective, and geometrically integral).  
The \defi{density degree set} \(\dendegs(X/k)\) of \(X/k\) is the set of integers \(d\) for which the collection of closed points of degree \(d\) on \(X\) are Zariski dense.  Since \(X\) is a curve, this is equivalent to asking that the degree \(d\) points be infinite, yet the definition in terms of Zariski density is natural for a variety of any dimension.  

In this paper we are concerned with the most basic such piece of information: the \defi{minimum density degree}\footnote{In previous work \cite{Smith-Vogt2020}, the second author called this invariant the \defi{arithmetic degree of irrationality}.  We prefer to switch to terminology that better generalizes to higher dimensional \(X\).} \(\min(\dendegs(X/k))\) is the smallest positive integer in \(\dendegs(X/k)\).
There is also a geometric version of the minimal density degree that is stable under finite extensions of the ground field.  Let \(\potdendegs(X/k)\) be the union of \(\dendegs(X/L)\) as \(L\) ranges over all finite extensions of \(k\). 
The \defi{minimum potential density degree} is \(\min(\potdendegs(X/k))\).  For more on the structure of \(\dendegs(X/k)\) and \(\potdendegs(X/k)\) see \cite{isolated_expository}.
Our motivating problem is:

\begin{mproblem}\label{main-prob}
Classify curves $X$ with $\min(\dendegs(X/k)) = d$, and those with \(\min(\potdendegs(X/k)) = d\).
\end{mproblem}

Faltings' theorem classifies curves \(X\) with \(\min(\potdendegs(X/k)) =1\).  Main problem \ref{main-prob} can therefore be viewed as a generalization of this fundamental problem.

There are two natural geometric sources of Zariski dense degree \(d\) points: if $X$ is a degree $d$ cover of $\PP^1$ or an elliptic curve $E$ of positive rank, then pulling back rational points on $\PP^1$ or $E$ gives an infinite family of degree $d$ (or less) points on $X$.
Previous work on Main Problem~\ref{main-prob} has focused on the geometric invariant \(\min(\potdendegs)\).
Harris--Silverman (for \(d=2\)) and Abramovich--Harris (for \(d=3\)) showed that the above two natural geometric sources of low degree points characterize when \(\min(\potdendegs)\) takes the value \(2\) or \(3\). More precisely, \(\min(\potdendegs(X/k)) = 2\) or \(3\) if and only if \(X_\kbar\) is a degree \(2\) or \(3\) cover of \(\pp^1\) or an elliptic curve.  Based on this evidence, Abramovich--Harris conjectured that the same should hold for all values of \(d\).  However, Debarre and Fahlaoui \cite{Debarre-Fahlaoui1993} showed that more obscure constructions of infinite families of degree \(d \geqslant 4\) points exist by cleverly constructing certain curves on the symmetric square of an elliptic curve.  A full classification for any larger values of \(d \geqslant 4\) has remained stubbornly out of reach, and there have been essentially no classification results for the arithmetic invariant \(\min(\dendegs)\).

In the present paper we refocus on Main Problem \ref{main-prob}, including the thornier arithmetic classification.  As a result, we obtain the following new classification.

\begin{theorem}\label{main-low-degrees}
Suppose $X/k$ is a nice curve. Then the following statements hold:
\begin{enumerate}
    \item If $\min(\dendegs(X/k)) = 2$, then $X$ is a double cover of $\PP^1$ or an elliptic curve of positive rank;
    \item If $\min(\dendegs(X/k)) =3$, then one of the following three cases holds:
    \begin{enumerate}
        \item  $X$ is a triple cover of $\PP^1$ or an elliptic curve of positive rank;
        \item\label{intro:case:quartic} $X$ is a smooth plane quartic with no rational points, positive rank Jacobian, and at least one cubic point;
        \item $X$ is a genus $4$  Debarre--Fahlaoui curve  (see Section \ref{sec:Debarre-Fahlaoui} for a precise definition);
\end{enumerate}
    \item If $\min(\potdendegs(X/k)) =d\leqslant 3$, then $X_\kbar$ is a degree $d$ cover of $\PP^1$ or an elliptic curve; 
    \item If $\min(\potdendegs(X/k)) = d=4,5$, then either $X_\kbar$ is a Debarre-Fahlaoui curve, or $X_{\kbar}$ is a degree $d$ cover of $\PP^1$ or an elliptic curve.
\end{enumerate}
\end{theorem}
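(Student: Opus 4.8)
The plan is to convert the statement about degree $d$ closed points into one about rational points on the symmetric power and then apply the Mordell--Weil theorem together with Faltings' theorem on subvarieties of abelian varieties. A closed point of degree $d$ on $X$ is exactly a $k$-rational point of $\Sym^d X$, so $\airr_k X = d$ forces $\Sym^d X(k)$ to be infinite, while minimality of $d$ guarantees only finitely many points of degree $< d$ (which controls how the degree $d$ points may decompose). Fixing a base point to define the Abel--Jacobi map $\Sym^d X \to \Pic^d X \cong \Jac X$, the images of these infinitely many points lie in the finitely generated group $\Jac X(k)$, so by Faltings their Zariski closure is a finite union of translates of abelian subvarieties. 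Pulling back, I obtain a positive-dimensional subvariety $W \subseteq \Sym^d X$ sweeping out the degree $d$ points, and I would split into two cases according to whether Abel--Jacobi contracts $W$.

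If $W$ is contracted to a point it lies in a single fiber, i.e.\ in a linear system $|L|$ with $\dim|L| \geq 1$ and $\deg L \leq d$; after removing base points this yields a $g^1_e$ with $e \leq d$, hence a degree $e$ map $X \to \PP^1$, and minimality upgrades $e$ to $d$. This is the $\PP^1$-cover source. If $W$ is not contracted, its image is a translate $A + c$ of a positive-dimensional abelian subvariety $A \subseteq \Jac X$, and the degree $d$ points are the effective lifts of the infinitely many rational points of $A + c$. Here I would invoke the paper's ``geometric origin'' structural result to produce a map $\phi\colon X \to Y$ to a lower genus curve, together with the subspace configuration recording how the fibers of $\phi$ distribute the $d$ points; the elliptic-curve source is the sub-case $\dim A = 1$ with $Y = E$ of positive rank, and every other possibility is governed by the combinatorics of the configuration.

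For the quantitative classification I would feed small $d$ into the subspace-configuration machinery, which bounds the genus of any curve carrying an \emph{exotic} family of degree $d$ points (one that is neither a cover of $\PP^1$ nor of an elliptic curve), reducing matters to finitely many low-genus cases classified by hand. For $\airr_{\kbar} X = d \leq 3$ the bound leaves no room for exotic behavior, recovering Harris--Silverman and Abramovich--Harris and giving part (3); for $d = 4,5$ the only surviving configurations are exactly those realized by Debarre--Fahlaoui curves, matched against the explicit construction, giving part (4). For the arithmetic statements (1) and (2) I would run the same dichotomy over $k$ itself while tracking fields of definition: the elliptic sub-case forces $E$ to have positive rank to supply infinitely many rational points, and for $d = 3$ the genus-$3$ analysis of $\Pic^3 X$ yields the two genuinely arithmetic exceptions. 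The smooth plane quartic with no rational point arises when $\Jac X$ has positive rank and a positive-rank coset in $\Pic^3 X$ pulls back through the (birational, since $g = 3$) Abel--Jacobi map $\Sym^3 X \to \Pic^3 X$ to infinitely many effective cubic divisors, which are irreducible closed points precisely because $X(k) = \emptyset$; indeed, a rational point would give a $g^1_3$ by projection and land in case (a). The genus $4$ Debarre--Fahlaoui curve is then the residual non-contracted configuration.

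The main obstacle is the non-contracted case for $d = 4,5$: proving that the only exotic families are the Debarre--Fahlaoui ones, with no sporadic higher-genus curves escaping the classification. This is exactly where the genus bound coming from the subspace-configuration invariant must do the essential work, ruling out hypothetical abelian-variety-origin families on curves of large genus. The remainder is careful but routine bookkeeping of fields of definition, Mordell--Weil ranks, and the finitely many combinatorial possibilities for how $d$ points can be spread across the fibers of $\phi$.
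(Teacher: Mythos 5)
Your proposal follows the same overall architecture as the paper --- Mordell--Lang applied to the image of $\Sym^d X(k)$ in $\Pic^d X$, a dichotomy between a contracted family (a $g^1_d$, hence a $\PP^1$-cover) and a positive-dimensional abelian translate $A$, then subspace-configuration genus bounds and a small-genus classification --- but two of the black boxes you invoke conceal genuine gaps, at exactly the points where the paper's proof does its real work. The first: you have no mechanism for excluding exotic curves at $d=2$, which is the entire content of statement (1) and is also needed for statement (4), since for $d=4$ a non-minimal curve could a priori be a degree $2$ cover of a curve $Y$ with $\airr_k Y = 2$, and one must know what such $Y$ look like. The paper settles both with Theorem \ref{thm:no_2_min} (there are no $2$-minimal curves), and this is \emph{not} a formal consequence of the genus bound plus ``classification by hand'': for $d=2$, Castelnuovo applied to the birational degree-$4$ system $|2D|$ gives only $g\leqslant 3$, leaving smooth plane quartics, which the paper excludes by noting that a smooth quartic image would force $2D=K_X$ for infinitely many distinct classes $D\in A(k)$ --- an argument absent from your outline. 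Relatedly, your description of the non-contracted case as producing ``a map $\phi\colon X\to Y$ to a lower genus curve'' inverts the logic: for $d$-minimal curves no such map exists, and those are precisely the curves demanding classification.

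Second, the step you yourself flag as ``the main obstacle'' genuinely fails if the structural results are invoked at the strength stated in the introduction. For parts (3)--(4) one must show that a $d$-minimal curve that is not Debarre--Fahlaoui has genus small enough that $\gon_{\kbar}X \leqslant \lfloor (g+3)/2 \rfloor \leqslant d$, which forces a degree-$d$ map to $\PP^1$ (degree $<d$ being impossible, as it would lower $\airr_{\kbar}$). For $d=5$, Theorem \ref{main-finer}(4) gives only $g\leqslant \max(8,10)=10$, which permits geometric gonality $6$ and does not close the case; the paper needs the sharper Lemma \ref{genus-fix-2} (if \eqref{dagger} fails then $r'(4)\geqslant 12$, whence $g\leqslant \pi(20,12)=8$) together with Theorem \ref{thm:genus-non-DF}. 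Moreover, identifying the surviving exotic configurations with Debarre--Fahlaoui curves --- including the genus-$4$ case in statement (2) --- is Theorem \ref{thm:main-debarre-fahlaoui}, a substantial theorem showing that curves with $r(2)=2$ map birationally onto curves in $\Sym^2 A$; it is not a matching exercise against the known construction. (Your plane-quartic analysis for $d=3$, $g=3$ agrees with the paper's and is correct.)
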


Suprisingly, the seemingly clever construction by Debarre--Fahlaoui of counterexamples to the conjecture of Abramovich--Harris arises perfectly naturally from our perspective.  The next open case is to classify curves of genus \(11\) with \(\min(\potdendegs(X/k)) = 6\), see Section \ref{sec:geometric-questions}. As can be seen from Case \ref{intro:case:quartic} of Theorem \ref{main-low-degrees}, there are certain arithmetic subtleties involved in the classification; some open questions concerning these subtleties are described in Section \ref{sec:arithmetic-questions}.

\medskip

The classification in Theorem \ref{main-low-degrees} is obtained from a systematic study of the possible infinite collections of degree \(d\) points.
Our guiding philosophy is that when \(d\) is small compared to the genus of \(X\), such infinite collections \emph{still} occur for good geometric reasons.  The first step in our analysis is thus to make this precise with the following genus bound, which reduces Main Problem \ref{main-prob} to finitely many genera for each value of \(d\).

\begin{theorem}\label{genus-bound}
Suppose $X/k$ is a nice curve of genus $g$ and $\min(\dendegs(X/k))= d$. Let $m \colonequals \lceil d/2\rceil -1$ and let $\varepsilon \colonequals 3d-1-6m<6$. Then one of the following holds:
\begin{enumerate}
    \item\label{case:d-nonminimal} There exists a nonconstant morphism of curves $\phi\colon X \to Y$ of degree at least $2$ such that $d  =\min(\dendegs(Y/k)) \cdot \deg \phi ;$
   \item\label{eq-genus-bound} The genus of $X$ is bounded \[g \leqslant \max \left(\frac{d(d-1)}{2}+1,\ 3m(m-1)+m\varepsilon \right).\]
\end{enumerate} 
\end{theorem}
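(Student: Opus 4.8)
The plan is to translate the existence of infinitely many degree-$d$ points into a statement about subvarieties of the Jacobian and then invoke Faltings' theorem. A degree-$d$ closed point is a $k$-rational point of the symmetric power $\Sym^d X$, so the hypothesis $\airr_k X = d$ provides an infinite set of $k$-points on $\Sym^d X$. Composing with the Abel--Jacobi map $u\colon \Sym^d X \to \Pic^d_X$ places infinitely many $k$-points inside the image $W_d \colonequals u(\Sym^d X) \subseteq \Pic^d_X$, which is defined over $k$. By Faltings' theorem on rational points of subvarieties of abelian varieties (applied to the $\Jac X$-torsor $\Pic^d_X$), the Zariski closure of these points is a finite union of translates of abelian subvarieties of $\Jac X$. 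I would fix one component $x + A \subseteq W_d$ defined over $k$ that carries a dense set of $k$-points and has $\dim A \geqslant 1$, taking $x$ to be the class of an effective degree-$d$ divisor; in particular $A$ has positive Mordell--Weil rank.

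Next I would run the dichotomy governed by the fibers of $u$ over $x+A$. Setting $V \colonequals u^{-1}(x+A)$, the fibers of $u|_V$ are the complete linear systems $|D_t|$ of the divisors $D_t$ in the family. If the general such fiber is positive-dimensional, the $D_t$ move in a pencil and produce a base-point-free linear series of degree $e \leqslant d$, hence a degree-$e$ morphism $X \to \PP^1$; minimality of $d$ forces $e = d$, and taking $Y = \PP^1$ (so $\airr_k Y = 1$ and $\deg\phi = d$) lands in case~\ref{case:d-nonminimal}. Here one must check that the pencil, a priori only a $k$-rational linear system, descends to a morphism defined over $k$, which follows from the rationality of $x$. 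Otherwise the general fiber of $u|_V$ is finite, the classes in $x+A$ are represented by essentially unique effective divisors, and by the geometric Riemann--Roch theorem the $d$ points of a general $D_t$ lie in linearly general position in the canonical embedding $X \hookrightarrow \PP^{g-1}$.

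In the finite-fiber case I would attempt to exhibit the family as pulled back along a cover. Writing $B$ for the complementary abelian subvariety of $A$ in $\Jac X$ and $A' \colonequals \Jac X / B$, the Abel--Jacobi map composed with the quotient gives $f\colon X \to A'$ with $\dim A' = \dim A$ and image a curve $X'$ generating $A'$, where $A'$ inherits positive rank from $A$. If $f\colon X \to X'$ has degree $\geqslant 2$, I would show that the moving divisors $D_t$ are, up to their finite fibers, pullbacks of a moving family on the normalization $Y$ of $X'$, yielding the bookkeeping $d = \airr_k Y \cdot \deg\phi$ of case~\ref{case:d-nonminimal}. Verifying that the $D_t$ are genuinely $\phi$-pullbacks — rather than merely compatible with the projection to $A'$ — is the delicate point of this step.

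It remains to treat the \emph{primitive} case, in which no such cover exists and which must force case~\ref{eq-genus-bound}; this is where I expect the main obstacle, and where the subspace configuration invariant does the work. Because the classes of the $D_t$ sweep out a full $\dim A$-dimensional abelian variety while the fibers of $u$ stay finite, the linear spans $\langle D_t\rangle \subseteq \PP^{g-1}$ and their mutual intersections satisfy strong incidence constraints; recording these spans is exactly the associated subspace configuration. A Castelnuovo-type count on the linearly-general-position divisors accounts for the term $d(d-1)/2 + 1$, while a finer estimate grouping the points into the $m = \lceil d/2\rceil - 1$ pairs governed by the Clifford index, with error term $\varepsilon = 3d - 1 - 6m$, accounts for $3m(m-1) + m\varepsilon$; taking the worse of the two gives the stated maximum. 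The crux is to make this incidence analysis uniform over the family and to extract the sharp constants, which is precisely what the structure theory of the subspace configurations is designed to supply.
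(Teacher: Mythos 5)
Your first step (Mordell--Lang applied to the image of $\Sym^dX(k)$ in $\Pic^d X$, then splitting on whether the general fiber of Abel--Jacobi over the translate is positive-dimensional) matches the paper's Setup and is fine. But the two steps that carry the actual content of the theorem both have genuine problems. First, your route to case~(1) in the finite-fiber case --- passing to the complementary abelian subvariety $B$, forming $A' = \Jac X/B$, and claiming that if $X \to X' \subseteq A'$ has degree $\geqslant 2$ then the moving divisors are pullbacks from the normalization of $X'$ --- is not merely ``delicate''; it is false. The Debarre--Fahlaoui curves are exact counterexamples: for $X \subset \Sym^2A$ in class $(d+m)H - mF$, the induced map $X \to A$ has degree $d+m \geqslant 2$, yet the moving degree-$d$ divisors $H_x \cdot X$ are \emph{not} fibers of any map (if $H_x\cdot X$ contains $[x+x']$ then so does $H_{x'}\cdot X$, so these divisors share points and cannot be pulled back), and no cover with $d = \airr_k Y \cdot \deg\phi$ exists --- these curves are $d$-minimal. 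The correct dichotomy, which the paper proves as Theorem \ref{2D-is-birational}, is about birationality of the map given by the linear system $|2D|$, not about the degree of an Abel--Jacobi-type projection; its proof requires Kani's de Franchis theorem and a substantial case analysis (the hardest case being degree-$d$ covers of pointless conics), none of which your outline supplies or can avoid.

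Second, in the primitive case you assert the genus bound rather than prove it: you appeal to ``strong incidence constraints'' and ``the structure theory of the subspace configurations'' to produce the constants, which is precisely the content that must be established. Your attribution of the two terms is also off. In the paper the dichotomy is condition \eqref{dagger} (whether a general point lies on exactly two divisors from $A$ meeting only there). When \eqref{dagger} holds, the growth estimate $s'(n) \geqslant \min(s(n-1)+1,\, d-1)$ (Corollary \ref{cor:s(n)-grows}, resting on Proposition \ref{extra-intersections}) yields $\dim |dD| \geqslant d(d+1)/2 - 1$, and Castelnuovo or Riemann--Roch applied to the degree-$d^2$ system $|dD|$ gives $g \leqslant d(d-1)/2+1$. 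When \eqref{dagger} fails, Lemma \ref{genus-fix-1} gives $r'(3) \geqslant 7$, and the second term is literally Castelnuovo's bound $\pi(3d,7) = 3m(m-1)+m\varepsilon$ for a degree-$3d$ curve in $\PP^7$ --- there is no ``Clifford index pairing'' involved. Note also that the paper runs the whole analysis in the auxiliary systems $|nD|$, not in the canonical embedding $\PP^{g-1}$ as you propose; linear general position of $D$ in the canonical system is neither used nor sufficient for these counts.
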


In case \eqref{case:d-nonminimal} there is a clear source of low degree points on $X$: they can be obtained as pullbacks of low degree point on $Y$ under $\phi$.  There is a long history of genus bounds in problems related to Main Problem \ref{main-prob}; see  \cite{Vojta1991quadratic},  \cite{Vojta1992generalization},  \cite{Abramovich-Harris1991}, \cite{song-tucker2001}.  Theorem \ref{genus-bound} is indirectly claimed in \cite{Abramovich-Harris1991} by combining  \cite[Lemma 3]{Abramovich-Harris1991} with \cite[Theorem 2]{Abramovich-Harris1991}; however, the statement of \cite[Lemma 3]{Abramovich-Harris1991} has an error, and the proof of \cite[Theorem 2]{Abramovich-Harris1991} contains a gap. See the discussion in Section \ref{relation}.

Following the ideas introduced by Abramovich and Harris, we study the geometry of curves with $\min(\dendegs(X/k)) = d$ by studying the geometry of linear systems of the form $|nD|$ for degree \(d\) points \(D\). The Mordell--Lang conjecture ensures that these linear systems have positive dimension. An important step in proving Theorem \ref{genus-bound} is Theorem \ref{2D-is-birational}, which states that unless case \eqref{case:d-nonminimal} holds, the linear systems $|nD|$ are birational for most $D$ and $n \geqslant 2$.

With birationality proved, we can investigate finer questions concerning the geometry of the linear systems $|nD|$. We do so by equipping these linear systems with a discrete-geometric structure: there is an infinite family of multisecant planes within each of the projective spaces $|nD|$, which form an combinatorially interesting configuration. The presence of this additional structure allows us to prove the following finer classification of curves \(X\) with \(\min(\dendegs(X/k)) = d\). To formally state this classification we require a notion of a sufficiently general degree $d$ point $D$; this is rigorously defined in Section \ref{sec:Mordell-Lang}. 

\begin{theorem}\label{main-finer}
Suppose $X$ is a curve with $\min(\dendegs(X/k)) =d$. Let $m \colonequals \lceil d/2\rceil -1$ and let $\varepsilon \colonequals 3d-1-6m<6$. Then for a sufficiently general degree $d$ point $D$ one of the following holds:
\begin{enumerate}
    \item $\dim |2D|=1$, and $X$ is a degree $d$ cover of an elliptic curve of positive rank;
    \item $\dim |2D|\geqslant 2$, the associated map $X \to \PP^{|2D|}$ is not birational onto its image, and there exists a covering of curves $\phi \colon X \to Y$ of degree at least $2$ such that $d  =\min(\dendegs(Y/k)) \cdot \deg \phi ;$
    \item $\dim |2D|=2$, the associated map $X \to \PP^2$ is birational onto its image, and $X$ is one of the Debarre-Fahlaoui curves (see Section \ref{sec:Debarre-Fahlaoui} for the precise definition);
    \item $\dim |2D|>2$,  the associated map $X \to \PP^{|2D|}$ is birational onto its image, and the genus $g$ of $X$ satisfies \[g \leqslant \max\left(\frac{(d-1)(d-2)}{2} + 2, \ 3m(m-1)+m\varepsilon \right)\]
\end{enumerate}

\end{theorem}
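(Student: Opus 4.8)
The plan is to run a case analysis on the integer $\dim|2D|$ together with the birationality of the associated map $\phi_{|2D|}\colon X \to \PP^{|2D|}$, after first extracting the relevant geometric input from the Mordell--Lang setup of Section \ref{sec:Mordell-Lang}. Since $\airr_k X = d$, a sufficiently general degree $d$ point $D$ has class lying in a coset $t + A \subseteq \Pic^d X$ of a positive-dimensional abelian subvariety $A \subseteq \Jac X$ whose rational points are Zariski dense. The first step is a ``doubling'' estimate: writing $\mathcal{F}$ for the family of effective degree $d$ divisors with class in $t+A$, the incidence variety of pairs $(E,E')$ with $[E]+[E'] = 2[D]$ maps generically finitely to $|2D|$ (an effective divisor of degree $2d$ has only finitely many splittings into two degree $d$ effectives), which yields $\dim|2D| \geqslant \dim A \geqslant 1$. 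In particular $\dim|2D| = 1$ forces $\dim|D| = 0$ and $\dim A = 1$.

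When $\dim|2D| = 1$, the abelian variety $A$ is an elliptic curve $E_0 \subseteq \Jac X$. The differences of the (infinitely many) degree $d$ classes in $t+E_0$ are rational points of $E_0$, so $E_0$ has positive rank. Dualizing the inclusion $E_0 \hookrightarrow \Jac X$ via the principal polarization produces a quotient elliptic curve $E$ and a nonconstant morphism $f\colon X \to E$; I would then show that $D$ is linearly equivalent to a fiber of $f$, so that $|2D| = f^*|2\cdot O_E|$ is exactly the pencil pulled back from the degree $2$ map $E \to \PP^1$, and that $\deg f = d$ (it cannot be smaller since $\airr_k X = d$, and $\dim|2D| = 1$ pins it to $d$). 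This is conclusion (1).

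In the cases $\dim|2D| \geqslant 2$, if $\phi_{|2D|}$ is not birational onto its image, then the contrapositive of Theorem \ref{2D-is-birational} places us in case \eqref{case:d-nonminimal} of Theorem \ref{genus-bound}, producing a cover $\phi\colon X \to Y$ of degree at least $2$ with $d = \airr_k Y \cdot \deg \phi$; this is conclusion (2). When $\phi_{|2D|}$ is birational, $X$ is birational to a nondegenerate curve of degree $2d$ in $\PP^{|2D|}$. For $\dim|2D| = 2$ this is a plane model, and the family of $d$-secant lines cut out by the degree $d$ points endows it with precisely the subspace configuration of a Debarre--Fahlaoui curve; matching this intrinsic configuration with the construction recalled in Section \ref{sec:Debarre-Fahlaoui} gives conclusion (3). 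For $\dim|2D| > 2$, the image is a nondegenerate degree $2d$ curve in $\PP^r$ with $r \geqslant 3$, and a Castelnuovo-type bound sharpened using the multisecant configuration improves the estimate of Theorem \ref{genus-bound} to the stated bound, which is conclusion (4).

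The main obstacle is the Debarre--Fahlaoui identification in the birational $\dim|2D|=2$ case: one must show that the intrinsic multisecant configuration on the plane model forces $X$ to lie on a symmetric square of an elliptic curve in exactly the manner of \cite{Debarre-Fahlaoui1993}, rather than merely bounding its invariants. The sharpened genus bound in the $\dim|2D|>2$ case is a secondary difficulty, since squeezing the constant from $\tfrac{d(d-1)}{2}+1$ down to $\tfrac{(d-1)(d-2)}{2}+2$ requires exploiting birationality and the configuration beyond the raw Castelnuovo inequality.
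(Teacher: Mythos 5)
Your case decomposition on $\dim|2D|$ and birationality is the right skeleton, and your handling of case (2) via Theorem \ref{2D-is-birational} is exactly what the paper does. But the proposal has genuine gaps in the three remaining cases, where all the real work lies. In case (1), your construction of $f\colon X \to E$ by dualizing $E_0 \hookrightarrow \Jac X$ does not obviously produce a degree $d$ map: the degree of that map is governed by the exponent of the polarization induced on $E_0$ by the theta divisor, not by $d$, and the assertion that ``$D$ is linearly equivalent to a fiber of $f$'' is precisely the statement that needs proof, not a step one can defer. Your fallback (``it cannot be smaller since $\airr_k X = d$'') only rules out $\deg f < d$, not $\deg f > d$. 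The paper instead proves this via Lemma \ref{lem:ramification_argument}: the degree $2d$ map $\pi$ given by $|2D|$ contracts every divisor of $A$, the induced map $\psi \colon A \to \pp^1$ is shown by a Riemann--Hurwitz count (using total ramification over the four classes $D'$ with $2D' = 2D$) to have degree exactly $2$, and hence a general fiber of $\pi$ splits as a disjoint union of two divisors from $A$, which yields the degree $d$ morphism $X \to A$ directly.

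In cases (3) and (4) you state the goals rather than arguments. The identification with Debarre--Fahlaoui curves is the entire content of Theorem \ref{thm:main-debarre-fahlaoui}, which requires proving $\dim A = 1$ (Lemma \ref{lem:dimg1}), $r'(3) = 5$ (Lemma \ref{lem:rprime_3}), and then a delicate trichotomy on the degree of the map $\eta \colon A \to \Span D_1$ (using tetrahedral line configurations and Sylvester--Gallai classifications) to conclude that exactly two divisors of $A$ pass through a general point of $X$; only then does one get the map $\mu \colon X \to \Sym^2 A$ and compute its numerical class $(d+m)H - mF$. Likewise, in case (4) you never explain why the bound is a \emph{maximum} of two quantities: this reflects the paper's dichotomy on condition \eqref{dagger}. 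When \eqref{dagger} holds, Proposition \ref{extra-intersections} forces $s(n)$ to grow, giving $r(n) \geqslant (n+1)(n+2)/2 - 3$ and the Castelnuovo bound $(d-1)(d-2)/2+2$ (Theorem \ref{thm:genus-non-DF}); when \eqref{dagger} fails, one instead uses $r'(3) \geqslant 7$ (Lemma \ref{genus-fix-1}) and Castelnuovo for a degree $3d$ curve in $\pp^7$, which is where $3m(m-1)+m\varepsilon = \pi(3d,7)$ comes from. A sketch that does not engage with \eqref{dagger} cannot produce the stated bound.
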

The proof of Theorem \ref{main-finer} involves a detailed analysis of the configuration geometry in $|3D|$; it shows how the geometry of the linear systems $|nD|$ naturally gives rise to the Debarre-Fahlaoui examples. 

\begin{remark}
The results of \cite{Abramovich-Harris1991} imply that the gonality of a curve with $\min(\dendegs(X/k)) = d$ is at most $2d$; this fact was also independently observed by Frey \cite{frey1994}. One corollary of Theorem \ref{main-finer} is that the geometric gonality can equal $2d$ only for (geometric) degree $d$ covers of elliptic curves; and can equal $2d-1$ only for (geometrically) Debarre-Fahlaoui curves. See Remarks \ref{rem:gonality-1} and \ref{rem:gonality-2}. 
\end{remark}

The method of examining multisecant configurations can be applied to study the low degree points on special families of curves.  As a demonstration, in Section \ref{sec:special-curves} we prove that projective curves of large genus have finitely many sufficiently low degree points. The statement of this estimate uses the Castelnuovo function $\pi(d,r)$; we recall its definition in Section \ref{sec:special-curves}. The number $\pi(d,r)$ is an upper bound for the genus of a nondegenrate degree $d$ curve in $\PP^r$. When $r$ is fixed and $d$ is growing, $\pi(d,r) \sim d^2/(2r-2)$.

\begin{theorem}\label{thms:intro-special-curves}
Suppose $X \subset \PP^r$ is an irreducible (possibly singular) curve of degree $e$ and genus $g$. Suppose $X$ has infinitely many points of degree $d$ not contained in hyperplanes of $\PP^r$. Then 
\[g \leqslant \pi(e+2d, 2r+1).\]
\end{theorem}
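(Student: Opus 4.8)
The plan is to map $X$ into a projective space of dimension at least $2r+1$ by a complete linear system of degree $e+2d$ that is birational onto its image, and then to invoke the Castelnuovo genus bound. Write $M = \O_X(1)$ for the hyperplane bundle, so $\deg M = e$ and $h^0(M)\geq r+1$ because $X$ is nondegenerate. The first step is to reinterpret the hypothesis: a degree $d$ point $D$ fails to lie on a hyperplane of $\PP^r$ exactly when no section of $M$ vanishes along $D$, i.e. $h^0(M(-D))=0$. By Riemann--Roch this also pins down $h^0(K_X - M + D) = g + d - e - 1$ for every such $D$, a relation I expect to use when counting sections.

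Next I would feed in the infinitude of the family through Faltings/Mordell--Lang: the classes of the degree $d$ points accumulate in a translate of a positive-dimensional abelian subvariety of $\Jac X$, producing a positive-dimensional family of spanning degree $d$ divisors (in particular $\dim|2D|\geq 1$, as in the discussion preceding Theorem \ref{2D-is-birational}). Choosing two sufficiently general members $D_1,D_2$ of this family, set $L = M(D_1+D_2)$, a line bundle of degree $e+2d$.

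The heart of the argument, and the step I expect to be the main obstacle, is to show that $h^0(L)\geq 2r+2$ and that the associated morphism $\phi_L\colon X \to \PP^{h^0(L)-1}$ is birational onto a nondegenerate image. Here the two inputs must be combined: the span conditions $h^0(M(-D_i))=0$ together with the inclusion--exclusion estimate $h^0(M(D_1+D_2)) \geq h^0(M(D_1)) + h^0(M(D_2)) - h^0(M)$ (valid when $D_1,D_2$ have disjoint support), and the fact that genericity within the positive-dimensional family forces enough speciality of $L$ that Riemann--Roch exceeds the naive estimate. Birationality and nondegeneracy of $\phi_L$ should again follow from a sufficiently general choice of $D_1,D_2$ in the family, in the spirit of Theorem \ref{2D-is-birational}; the delicate point is ruling out that $\phi_L$ factors through a map of lower degree or that its image lies in a proper linear subspace.

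Finally, granting $h^0(L)\geq 2r+2$ and birationality, the image $C=\phi_L(X)$ is an irreducible nondegenerate curve of degree $e+2d$ spanning some $\PP^N$ with $N\geq 2r+1$, and $\phi_L\colon X\to C$ is birational, so the geometric genus of $C$ equals $g$. The Castelnuovo bound then gives $g \leq \pi(e+2d,N)$, and since $\pi(\delta,\cdot)$ is nonincreasing in its second argument we conclude $g \leq \pi(e+2d,2r+1)$. Equivalently, one may project $C$ generically to $\PP^{2r+1}$, which preserves degree, irreducibility, and birationality, and apply the Castelnuovo bound there directly.
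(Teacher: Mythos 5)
Your overall architecture --- produce a complete linear system of degree $e+2d$ that maps $X$ birationally to a nondegenerate curve in $\PP^N$ with $N\geqslant 2r+1$, then quote Castelnuovo and monotonicity of $\pi(\delta,\cdot)$ --- is the same as the paper's, and the parts you treat as delicate are actually the easy ones: birationality of $\phi_L$ is automatic, since $|L|$ contains the subsystem $D_1+D_2+\{\text{hyperplane sections}\}$, which already defines the original birational map onto $X\subset\PP^r$, and nondegeneracy is automatic for a complete system. The genuine gap is exactly the step you flag as ``the main obstacle'': $h^0(L)\geqslant 2r+2$, and the tools you propose cannot close it. Inclusion--exclusion gives $h^0(M(D_1+D_2))\geqslant h^0(M(D_1))+h^0(M(D_2))-h^0(M)$, but you have no lower bound on $h^0(M(D_i))$ beyond the trivial $h^0(M(D_i))\geqslant h^0(M)$: when $M$ is special, $D_i$ may impose independent conditions on $|K_X - M|$, in which case $h^0(M(D_i))=h^0(M)$ and inclusion--exclusion yields only $h^0(L)\geqslant h^0(M)$, far short of $2r+2$. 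The hypothesis $h^0(M(-D_i))=0$ gives an \emph{upper} bound $h^0(M)\leqslant d$, not a lower bound on anything. Finally, the appeal to speciality is backwards and circular: genericity cannot force $h^1(L)>0$; and if instead $L$ is nonspecial, then $h^0(L)\geqslant 2r+2$ is equivalent to $g\leqslant e+2d-2r-1$, a bound linear in the degree which is much stronger than the quadratic Castelnuovo bound being proved, and which fails precisely for the extremal curves --- i.e.\ the case where $L$ is special is the whole content of the theorem, and there your argument says nothing.

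The missing idea, which is how the paper proves this step, is a span argument rather than a section count. Work with $|2D+H|$ for a single degree $d$ point $D$ such that $|2D|$ is basepoint free (Mordell--Lang plus the abelian translate property, or the pencil case). If $\Span_{2D+H}(D)$ had dimension $s<r$, choose $s+1\leqslant r$ points $S\subset D$ with $\Span_{2D+H}(S)=\Span_{2D+H}(D)$ and a hyperplane section $H'$ containing $S$; basepoint-freeness of $|2D+H-H'|=|2D|$ forces $X\cap\Span_{2D+H}(H')=H'$, and then $D\subset\Span_{2D+H}(S)\subset\Span_{2D+H}(H')$ gives $D\subset H'$, contradicting the hypothesis that $D$ lies on no hyperplane. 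Hence $\dim\Span_{2D+H}(D)\geqslant r$, and projecting from this span, $\dim|2D+H|\geqslant (r+1)+\dim|D+H|\geqslant (r+1)+r=2r+1$. Note where the gain occurs: the $r+1$ extra sections appear between $|D+H|$ and $|2D+H|$, not between $|H|$ and $|H+D_i|$; so the bound genuinely cannot be assembled from the two separate increments that your inclusion--exclusion decomposition requires, since each of those increments can a priori be zero.
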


There are many open questions concerning the geometry of curves with abundant low degree points, both of arithmetic and purely geometric nature. We survey these questions in Section \ref{sec:questions}.

\subsection{Relation to previous work} \label{relation}
 The first results on low degree points were obtained by Hindry \cite{Hindry1987}, who studied quadratic points on modular curves $X_0(p)$ and asked if in general a curve with infinitely many quadratic points is either hyperelliptic or bielliptic. Later, Faltings \cite{Faltings1991}, and Vojta \cite{Vojta1991quadratic} used diophantine approximation techniques to describe low degree points on curves of small gonality. The strongest of these results was obtained by Vojta, who showed that a degree \(s\) cover of \(\pp^1\) with infinitely many degree \(d\) points not contracted by the map to \(\pp^1\) has genus at most \(s(d-1) +1\) \cite{Vojta1992generalization}.

The resulting genus bound is sharp: if $E$ is an elliptic curve of positive rank, then a $(d,s)$-curve on $E \times \PP^1$ satisfies the conditions of the theorem and has genus $g=s(d-1) +1.$
The general question of describing curves with minimum potential density degree $d$ was first addressed in \cite{Harris-Silverman1991} in the case $d=2$ and in \cite{Abramovich-Harris1991} for $d=3$. Based on these results, Abramovich and Harris proposed the following conjecture, which was soon disproved by Debarre and Fahlaoui.
\begin{conjecture}[\cite{Abramovich-Harris1991}; proved for \(d=2\) \cite{Harris-Silverman1991}; proved for \(d=3\) \cite{Abramovich-Harris1991}; disproved for all \(d\geqslant 4\) \cite{Debarre-Fahlaoui1993}]
Suppose $\min(\potdendegs(X/k)) = d$. Then $X_\kbar$ has a degree $d$ map to $\PP^1$ or an elliptic curve.
\end{conjecture}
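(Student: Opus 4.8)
The plan is to establish the statement only in the range where it is actually true, namely $d \leqslant 3$ (the box records that it is false for $d \geqslant 4$), following the strategy of Harris--Silverman and Abramovich--Harris; pinpointing the step that collapses for larger $d$ is the real content. First I would translate the arithmetic hypothesis into a statement about rational points on a symmetric power. A closed point of degree $d$ on $X$ is the same datum as a $k$-rational point of the symmetric power $\Sym^d X$, and minimality of $d = \airr_k X$ guarantees that infinitely many of these rational points are \emph{genuine}: they do not lie in the image of $\Sym^a X \times \Sym^{d-a} X \to \Sym^d X$ for $0 < a < d$, since otherwise $X$ would have infinitely many points of strictly smaller degree. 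Composing with the Abel--Jacobi morphism $\Sym^d X \to \Pic^d X$, whose image is the effective locus $W_d \subseteq \Pic^d X$, produces infinitely many rational points on $W_d$; because a degree $d$ point already makes $\Pic^d X$ a trivial torsor under $J = \Jac X$, I may fix a rational base point and work inside $J$ itself.

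Next I would apply the Mordell--Lang statement (Faltings' theorem on rational points of subvarieties of abelian varieties): the rational points of $W_d$ lie in a finite union of cosets $z_i + B_i \subseteq W_d$, where each $B_i \subseteq J$ is an abelian subvariety. Infinitude of the genuine degree $d$ points forces some $B \colonequals B_i$ to be positive-dimensional and of positive Mordell--Weil rank. Pulling the coset $z + B$ back to $\Sym^d X$ yields a positive-dimensional family $V$ of degree $d$ effective divisors on $X$, and the geometry of $X$ should now be read off from this family.

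This produces the required dichotomy. If the Abel--Jacobi map contracts a positive-dimensional subfamily inside $V$, then the divisors there are linearly equivalent and span a linear system of dimension $\geqslant 1$; for $d \leqslant 3$ any base point would split off a lower-degree moving divisor, contradicting minimality of $d$, so the system is a base-point-free $g^1_d$ and gives a degree $d$ morphism $X_\kbar \to \PP^1$. Otherwise the Abel--Jacobi map is generically finite on $V$, so $\dim B = \dim V \geqslant 1$; the low dimension $\dim \Sym^d X = d \leqslant 3$ forces $\dim B = 1$, hence $B$ is an elliptic curve, and composing $X \injects \Pic^1 X \cong J$ with a quotient $J \surjects E'$ isogenous to $B$ exhibits $X_\kbar$ as a degree $d$ cover of the elliptic curve $E'$; a trace/fiber computation confirms nonconstancy and that the degree equals $d$.

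The main obstacle is precisely the control of $\dim B$ and of the degree of the map extracted from the coset $z + B$. One must bound the dimension and position of abelian-subvariety cosets inside $W_d$ to exclude a higher-dimensional $B$ that would yield a map to an abelian variety of dimension $\geqslant 2$ rather than to $\PP^1$ or an elliptic curve, and one must run a Castelnuovo-type argument to ensure the pencil is genuinely degree $d$ and the elliptic map is neither contracted nor of the wrong degree. For $d \leqslant 3$ these are low-dimensional and handled directly, but for $d \geqslant 4$ exactly this step fails: $\Sym^d X$ can carry positive-dimensional families of non-linearly-equivalent degree $d$ divisors, parametrized by higher-dimensional abelian subvarieties, that are neither pencils nor pullbacks along a degree $d$ map to $\PP^1$ or $E$ --- the Debarre--Fahlaoui curves. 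Taming these families is what the multisecant subspace configurations of the present paper are built to do.
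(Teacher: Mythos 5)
Your first reduction (degree $d$ points $\Rightarrow$ rational points on $\Sym^dX$ $\Rightarrow$ Mordell--Lang $\Rightarrow$ a positive-dimensional abelian translate $z+B\subseteq W_dX$ with dense rational points) is exactly the paper's starting point in Section \ref{sec:Mordell-Lang}, but both prongs of your concluding dichotomy rest on assertions that are unjustified and, as stated, false. First, the claim that ``$\dim \Sym^dX = d\leqslant 3$ forces $\dim B = 1$'' has no proof and no way to get one by dimension-counting alone: when $g=3$, Riemann--Roch makes every degree $3$ class effective, so $W_3X=\Pic^3X$, and for a smooth plane quartic with no rational point, a cubic point, and simple Jacobian of positive rank (precisely case (2)(b) of Theorem \ref{main-low-degrees}, whose existence the paper expects), the closure of the rational points forces $B=\Jac X$, of dimension $3$. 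For such a curve neither branch of your dichotomy applies --- the divisors do not move in $k$-rational pencils and $B$ is not an elliptic curve --- yet the conjecture is still true, because a genus $3$ curve has \emph{geometric} gonality $3$ for reasons invisible to the family $B$. This is the structural point your outline misses: the conclusion is geometric (over $\kbar$), and it cannot in general be manufactured directly from the arithmetic family. The paper's actual route is different: reduce to $d$-minimal curves, prove birationality of $|2D|$ (Theorem \ref{2D-is-birational}), use the subspace-configuration machinery to get a genus bound (Theorem \ref{thm:weak-genus-bound}: genus $\leqslant 4$ when $d=3$; and no $2$-minimal curves at all, Theorem \ref{thm:no_2_min}), and then conclude by the classical fact that curves of genus $\leqslant 4$ have geometric gonality $\leqslant 3$ (Corollary \ref{cor:3-min}).

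Second, even granting $\dim B=1$, the step ``compose $X\hookrightarrow \Jac X$ with a quotient $\Jac X\twoheadrightarrow E'$ isogenous to $B$; a trace/fiber computation confirms the degree equals $d$'' is not a formal consequence of having an elliptic translate in $W_dX$ --- and your diagnosis of why things fail for $d\geqslant 4$ is off target. The Debarre--Fahlaoui counterexamples are \emph{not} produced by higher-dimensional abelian subvarieties: a Debarre--Fahlaoui curve $X\subset \Sym^2A$ in class $(d+1)H-F$ carries a one-dimensional elliptic translate $x\mapsto [H_x\cdot X]$ in $W_dX$, yet the associated subvariety of $\Jac X$ is pulled back along the sum map $X\to A$, so the composite $X\to\Jac X\to E'$ has degree $[X]\cdot F=d+1$ rather than $d$, and a general such $X$ admits no degree $\leqslant d$ map to any curve of positive genus (Proposition \ref{prop:DFs-are-minimalish}); as the paper notes, the divisors $H_x\cdot X$ overlap pairwise and so cannot be fibers of any morphism. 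Thus the implication ``elliptic translate in $W_dX$ $\Rightarrow$ degree $d$ cover of an elliptic curve'' is simply false in general, and nothing in your sketch invokes $d\leqslant 3$ at this point. What is actually needed is to prove that the divisors parametrized by $B$ \emph{are} the fibers of a morphism $X\to B$; in the paper this is the content of Lemma \ref{lem:ramification_argument}, a Riemann--Hurwitz argument showing that the fibers of an associated $g^1_{2d}$ split into pairs of divisors from $A$, and it becomes available only after the dimension control of Proposition \ref{prop:elliptic-exceptions}. In short, at both forks the decisive step is asserted rather than proved, and in both cases the assertion fails without the machinery the paper builds.
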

The presence of counterexamples makes it hard to analyze the minimum density degree for arbitrary curves; however, the methods used in \cite{Abramovich-Harris1991} can still be applied to certain classes of special curves. For example, Debarre and Klassen \cite{Debarre-Klassen1994} showed that a smooth plane curve $X$ of degree $d \geqslant 8$ has minimum density degree $d$ or $d-1$ corresponding to the cases $X(k)=\emptyset$ and $X(k) \neq \emptyset$ respectively. For a generalization to curves on other surfaces see \cite{Smith-Vogt2020}.

A word of warning is warranted concerning the work of Abramovich and Harris \cite{Abramovich-Harris1991}: as detailed in \cite{Debarre-Fahlaoui1993}, the paper contains several errors (including in Lemma 3, Lemma 6, Lemma 8, and Corollary 1), which, while not severe enough to make the main results false, can be misleading.
 Some corrections are described in \cite{Debarre-Fahlaoui1993}, however one of the main results -- \cite[Theorem 2]{Abramovich-Harris1991} --  should be considered unproved. For this reason, we give full proofs of several simple lemmas appearing in \cite{Abramovich-Harris1991} when we need them. 

Other related work includes the study of integral points of low degree \cite{levin2016}, generalization of the work of Vojta to covers of curves \cite{song-tucker2001}, results on low degree points for curves on product surfaces \cite{levin2007}.

\subsection{Structure of the paper}

In Section \ref{sec:Mordell-Lang} we describe how the Mordell--Lang conjecture gives geometric restrictions on the curves with $\min(\dendegs(X/k)) = d$; this observation was also used in \cite{Harris-Silverman1991}, \cite{Abramovich-Harris1991}.

In Section \ref{sec:Birationality} we prove a key technical result: the birationality of the linear systems $|nD|$ for $n \geqslant 2$ and for sufficiently general degree $d$ points $D$ on $X$. More precisely, we show that these linear systems are birational if there does not exist a cover $\phi \colon X \to Y$ of degree at least $2$ with $d=\min(\dendegs(Y/k)) \cdot \deg \phi.$

In Section \ref{sec:subspace-configurations} we enrich the linear systems $|nD|$ with additional discrete-geometric structure of a configuration of multisecant subspaces to $X$. The main properties of this structure are summarized in Section \ref{sec:construction-summary}. As an application, we prove Theorem \ref{genus-bound}.

In Section \ref{sec:Debarre-Fahlaoui} we use subspace configruations to relate geometry of special linear systems to the construction of Debarre--Fahlaoui; in particular we prove most of Theorem \ref{main-finer}.

In Section \ref{sec:Applications} we collect geometric corollaries of the results obtained so far. We prove Theorem \ref{thms:intro-special-curves}, finish the proof of Theorem \ref{main-finer}, and summarize what we know about curves with minimum density degree at most $5$ to prove Theorem \ref{main-low-degrees}.

In Section \ref{sec:questions} we collect some open questions on the geometry of curves with $\min(\dendegs(X/k)) = d$.

\subsection{Notation}

Throughout the paper $k$ will denote a fixed number field and $X/k$ will denote a nice curve.  Let \(\bar{k}\) be an algebraic closure of \(k\).  We write \(\bar{X} = X_{\bar{k}}\) for the base-change of \(X\) to \(\bar{k}\).  By a \defi{degree \(d\)} point on \(X\) we mean a closed point with residue field a degree \(d\) extension of \(k\).  Write \(\Sym^d X = X^d/\!\!/S_d\) for the \(d\)th symmetric power of the curve \(X\), and \(\Pic^d_X\) for the degree \(d\) component of the Picard scheme of \(X/k\).  We write \(\Pic^dX\) for the group of isomorphism classes of degree \(d\) line bundles on \(X/k\).  There is an inclusion \(\Pic^dX \subset \Pic^d_X(k) = \Pic^d_X(\kbar)^{\Gal(\kbar/k)}\), which need not be an equality if \(X(k)  = \emptyset\).

We write \(W_dX = W^0_dX\) for the image of the Abel--Jacobi map \(\Sym^d X \to \Pic^d_X\) sending an effective divisor of degree \(d\) to its linear equivalence class.  This is a Brill--Noether locus of \(\Pic^d_X\) (see \cite[Chapter IV \(\S\)3]{ACGH} for more details on Brill--Noether loci). The fiber of the Abel--Jacobi map over a line bundle \(L \in \Pic^d X\) is isomorphic to the \defi{complete linear system} \(|L| \simeq \pp H^0(X, L)\) of \(L\).  When \(\dim |L| = r\), such a linear system is called a \defi{\(g^r_d\)}.  The minimal value of \(d\) for which \(X\) has a (\(k\)-rational) \(g^1_d\) is called the \defi{gonality} of \(X\), denoted \(\gon(X)\).

\subsection*{Acknowledgements}
We thank Dan Abramovich, Juliette Bruce, Tangli Ge, Brendan Hassett, Eric Larson, Daniel Litt,  Alexander Petrov, and Bianca Viray for helpful conversations and communications.
We also thank the anonymous referees for a careful reading of our paper and helpful comments.
Finally, we express our admiration for the foundational papers of Harris--Silverman \cite{Harris-Silverman1991}, Abramovich--Harris \cite{Abramovich-Harris1991}, Debarre--Fahlaoui \cite{Debarre-Fahlaoui1993}, and Frey \cite{frey1994} that inspired our work on this subject.
This material is partly based upon work supported by the National Science Foundation under Grant No. DMS-1928930 while B.K. participated in a program hosted by the Mathematical Sciences Research Institute in Berkeley, California, during the Fall 2020 semester.  I.V.~was partially supported by a NSF MSPRF under DMS-1902743 and by DMS-2200655.

\section{Corollaries of Mordell--Lang}\label{sec:Mordell-Lang}

In this section we collect some corollaries of the Mordell--Lang conjecture, now a theorem of Faltings \cite[Theorem 1]{Faltings1991}, for the structure of rational points on subvarieties of the Picard scheme of a curve over \(k\).

Suppose that $\min(\dendegs(X/k)) =d$ and that the gonality $\gon(X)$ is strictly greater than $d$. Since $\min(\dendegs(X/k))=d$, the set $\Sym^d X (k)$ is infinite. Since $\gon(X) > d$, the Abel--Jacobi map $\Sym^d X (k) \to \Pic^d_X(k)$ is injective. Consequently, the rational points of the image $W_dX(k)$ are an infinite set.  Since \(W_dX\) is a subvariety of a torsor under an abelian variety, the Mordell Lang conjecture \cite[Theorem 1]{Faltings1991} implies that there exists a translate $A \subset W_d X$ of a positive-dimensional abelian subvariety $A^0 \subset \Pic^0_X$, such that $A(k)$ is Zariski dense in $A$.

By the semicontinuity theorem, there is an open dense locus in \(A\) consisting of points for which the corresponding (isomorophism class of) line bundle \([L]\) has the minimal achieved value of \(h^0(X, L)\).  (By virtue of the fact that \(A \subset W^0_dX\), this minimal value is positive.)  The fibers of \(\Sym^d X \to \Pic^d_X\) are Severi--Brauer varieties of dimension \(h^0(X, L)-1\).  Since \(\gon(X) > d \), any fiber of \(\Sym^d X \to \Pic^d_X\) having a rational point is necessarily a \(\pp_k^0\).  Since \(\Sym^d X(k) \neq \emptyset\), the minimal achieved value of \(h^0(X, L)\) must be \(1\).  Hence there is an open dense locus \(U \subset A\) over which \(\Sym^d X \to \Pic^d_X\) is an isomorphism.
In the case \(\dim A = 1\), applying the curve-to-projective extension theorem to the map \(U \to \Sym^d X\), we see that there is a canonical effective divisor associated to every point of \(A\) (and in particular \(U(k) = A(k)\)).  Moreover, even when \(\dim A > 1\), resolving the indeterminacy of the inverse \(U \to \Sym^d X\) (in a similar manner to the proof the Lang--Nishimura lemma as in \cite[Proposition A.6]{RY00}) proves that \(U(k) = A(k)\).

Since we will focus on this setup for the majority of the paper, we codify it in the following:

\begin{setup}\label{main_setup}
Let \(X\) be a nice curve with \(\min(\dendegs(X/k))  = d\) and \(\gon (X) > d\).  Write \(A \subset W_dX\) for an associated positive-dimensional abelian translate and \(U \subset A\) for the open dense locus over which \(\Sym^d X \to \Pic^d_X\) is an isomorphism.
\end{setup}

Since the rational points of \(A\) are Zariski dense, any nonempty open in \(A\) also has Zariski dense rational points.  
By a \defi{general} rational point of \(A\), we mean a rational point in an open dense subvariety of \(A\).  By a \defi{general effective divisor} \(D\) in \(A(k)=U(k)\), we mean a rational point in an open dense subvariety of \(U \subset A\) where there exists a unique effective divisor representing each isomorphism class of line bundle.  
A general effective divisor \(D\) in \(A(k)\) is a degree \(d\) point on \(X\).

Consider the incidence correspondence 
\begin{equation}\label{eqtn:incidence}
     I \colonequals \{(p, [D]) \in X \times U : p \in D\}.
\end{equation}
Then \(I\) necessarily dominates \(X\) via the first projection, and is a degree \(d\) cover of \(U\) via the second projection.  The dimension of \(I\) is therefore equal to the dimension of \(A\).  The (arithmetic) monodromy of this degree \(d\) cover is transitive, since a general effective divisor in \(U\) corresponds to a degree \(d\) point on \(X\).  In particular, we have that
\begin{equation}\label{eq:disjoint_support}
\text{a general pair of effective divisors in \(U\) have disjoint support. }
\end{equation} 
The following lemma shows that at the same time, there are many pairs of divisors in \(U\) that share points on \(X\).

\begin{lemma}\label{two-divisors-through-a-point}
Suppose we are in Setup \ref{main_setup} and that if \(\dim A = 1\), then \(X\) is not a degree \(d\) cover of \(A\). Then for any open subset $V \subset U$ and a general point $P \in X(\kbar)$ there exists a pair of distinct divisors $D_1, D_2 \in V$ containing $P$.
\end{lemma}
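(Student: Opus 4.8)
The plan is to analyze the fibers of the first projection $\pi_1 \colon I \to X$ of the incidence correspondence, since the existence of two distinct divisors through a point $P$ is exactly the statement that the fiber $\pi_1^{-1}(P)$ contains at least two points. Because $V \subset U$ is dense (as $U$ is irreducible) and the second projection $\pi_2 \colon I \to U$ is finite, the preimage $\pi_2^{-1}(V)$ is a dense open subset of $I$; restricting to it changes neither the dimension nor the generic fiber cardinality of $\pi_1$. It therefore suffices to understand the generic fiber of $\pi_1$ itself, and then to observe that for general $P$ this fiber meets the dense open $\pi_2^{-1}(V)$. Recall from the discussion preceding the lemma that $I$ is irreducible (the monodromy of $\pi_2$ is transitive), that $\dim I = \dim A$, and that $\pi_1$ is dominant.

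When $\dim A \geq 2$, the generic fiber of the dominant map $\pi_1$ has pure dimension $\dim I - \dim X = \dim A - 1 \geq 1$, hence is infinite. For general $P$ the closed set $\pi_2^{-1}(U \smallsetminus V)$, of dimension $< \dim I$, cannot contain this positive-dimensional fiber, so $\pi_1^{-1}(P) \cap \pi_2^{-1}(V)$ is a nonempty open subset of a pure $\geq 1$-dimensional variety and thus infinite. This yields infinitely many (in particular two distinct) divisors in $V$ through $P$, and uses none of the extra hypothesis.

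The main case, and the only place where the hypothesis enters, is $\dim A = 1$. Here $I$ is an irreducible curve, $\pi_1$ has finite generic fiber, and the desired conclusion is equivalent to $\deg \pi_1 \geq 2$. Suppose instead $\deg \pi_1 = 1$, i.e.\ $\pi_1$ is birational. Then on a dense open locus of $X$ each point $P$ lies on a unique divisor of $U$, and sending $P$ to this divisor defines a rational map $X \dashrightarrow A$; since $X$ is a smooth curve and $A$ is projective, this extends to a morphism $g \colon X \to A$. Concretely $g = \pi_2 \circ \pi_1^{-1}$, so $g$ is dominant, and comparing function fields gives $\deg g = \deg \pi_2 / \deg \pi_1 = d$. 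Thus $g$ exhibits $X$ as a degree $d$ cover of the genus-one curve $A$, contradicting the standing assumption that $X$ is not a degree $d$ cover of $A$ when $\dim A = 1$. Hence $\deg \pi_1 \geq 2$, which completes the argument.

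The only genuine obstacle is this $\dim A = 1$ case: the dimension count is vacuous there, so one must extract geometric structure from the hypothetical failure rather than from numerology. The crux is recognizing that birationality of $\pi_1$ permits inverting it and transporting the degree-$d$ map $\pi_2$ into a degree-$d$ morphism $X \to A$, whose fibers are precisely the divisors parametrized by $U$ — exactly the structure the hypothesis rules out. The points demanding care are the bookkeeping of degrees under the birational inversion (to pin the degree of $g$ to $d$ rather than merely $\geq d$) and the verification that the rational assignment $P \mapsto [D]$ extends to an honest morphism to $A$.
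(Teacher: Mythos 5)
Your proposal is correct and takes essentially the same route as the paper: the paper also argues that if a general point lay on only one divisor, the resulting rational map $X \dashrightarrow A$ would be dominant, forcing $\dim A = 1$, and would then extend to a degree $d$ covering $X \to A$, contradicting the hypothesis; your incidence-correspondence phrasing and case split are just a more explicit rendering of the same argument, with the degree bookkeeping $\deg \pi_2 = \deg g \cdot \deg \pi_1$ spelled out. One small point of rigor: in the $\dim A \geqslant 2$ case, $\pi_2^{-1}(U \smallsetminus V)$ can have dimension exactly $\dim I - 1$, which equals the generic fiber dimension of $\pi_1$, so for a \emph{fixed} $P$ the containment of the fiber is not excluded by dimension alone; the correct justification is that if it held for all $P$ in a dense open $X_0 \subset X$, then the closed set $\pi_2^{-1}(U \smallsetminus V)$ would contain the dense subset $\pi_1^{-1}(X_0)$ of $I$ and hence equal $I$, contradicting $\dim \pi_2^{-1}(U \smallsetminus V) < \dim I$.
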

\begin{proof}

Suppose to the contrary, that for some open $V \subset U$ a general point $P \in X$ is contained in a unique divisor from $V$. The rational map $\phi: X \dashrightarrow V$ that sends a point $P \in X$ to the unique divisor in $V$ that contains $P$ is dominant, and so $\dim A = 1$. In this case, the map $\phi$ extends to a degree $d$ covering $X \to A$ (by \eqref{eq:disjoint_support}).
\end{proof}

\subsection{The abelian translate property}\label{sec:abelian-translate}
The abelian translate \(A\) is a torsor under an abelian variety \(A^0 \subset \Pic^0_X\).  The group law on \(A^0\) has the following consequence, which we term the \defi{abelian translate property}: for any three points \(L_1\), \(L_2\) and \(L_3\) of \(A\), the line bundle \(L_1 \otimes L_2 \otimes L_3^{-1}\) is again a point of \(A\).  Moreover, on rational points, for effective divisors \(D_1\), \(D_2\) and \(D_3\) in \(U(k)\), 
the divisor \(D_1 + D_2 - D_3\) is again in \(U(k)\).

Fix an effective divisor $D$ in $U(k)$.  The abelian translate property implies that for any $n-1$ divisors $D_1, \dots, D_{n-1}$ in $U(k)$, there exists a $n$th divisor $D_n$ in $U(k)$ that is linearly equivalent to
\[nD - D_1 - \cdots - D_{n-1}.\]

In the next section, we will prove that Setup \ref{main_setup} implies that the linear systems \(|\O(nD)|\) for \(D\) a general effective divisor in \(A(k)\) are birational unless there is a natural geometric source of degree \(d\) points on \(X\).  We will then interpret the abelian translate property geometrically in terms of spans of divisors in \(|\O(nD)|\).

\section{Birationality}\label{sec:Birationality}
The main result of this section is Theorem \ref{2D-is-birational} which shows that in Setup \ref{main_setup}, unless an infinite collection of degree \(d\) points on \(X\) is obtained by pullback from a lower genus curve, the linear system $|2D|$ is birational for a general $D \in A(k)$. In particular, this immediately implies that the genera of such curves are bounded by \((d-1)(2d-1)\). These results are closely related to \cite{Abramovich-Harris1991}. Our main Theorem \ref{2D-is-birational} is  similar to \cite[Lemma 3]{Abramovich-Harris1991}; note, however, that the statement of \cite[Lemma 3]{Abramovich-Harris1991} has an error (the last formula of the statement is false), and more importantly the proof does not go through for curves which are degree $d$ covers of pointless conics -- the case that requires most work in our Theorem \ref{2D-is-birational}. 

We will use the following version of de Franchis theorem due to Kani.

\begin{theorem}\label{thm:kani}
Suppose $k$ is a field of characteristic zero and $X/k$ is a nice curve. Then
\begin{enumerate}
    \item There exist at most finitely many surjective morphisms $X \to Y$ to curves $Y/k$ of genus at least $2$;
    \item For any integer $d$ there exists at most finitely many surjective morphisms $X \to C$ of degree less than $d$ to curves of genus $1$, up to translations on the target.
\end{enumerate}
\end{theorem}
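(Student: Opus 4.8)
The plan is to prove this version of de Franchis--Kani by reducing to the classical de Franchis theorem over $\C$ and then descending the finiteness statement back to $k$. The key structural input is the Mordell--Lang philosophy already invoked in the paper: maps from $X$ to curves $Y$ of genus $\geqslant 2$ (up to the relevant equivalence) are controlled by homomorphisms of Jacobians, and the Jacobian $\Jac X$ has only finitely many abelian subvarieties up to isogeny with bounded-degree polarization data. I would organize the argument around the induced pullback map on Jacobians and the Poincaré reducibility theorem.

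\medskip

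First I would establish part (1). A surjective morphism $\phi \colon X \to Y$ with $g(Y) \geqslant 2$ induces, via pullback of divisor classes, an injection $\phi^* \colon \Jac Y \injects \Jac X$ of abelian varieties (injectivity up to finite kernel follows since $\phi^*$ composed with the norm map is multiplication by $\deg \phi$). By Poincaré reducibility, $\Jac X$ has only finitely many abelian subvarieties up to isogeny, and in fact the images $\phi^*(\Jac Y)$ are \emph{honest} abelian subvarieties, not just isogeny classes. The essential finiteness statement is that an abelian variety has only finitely many abelian subvarieties of each fixed dimension that arise as images of such pullbacks; combined with the fact that $Y$ (and $\phi$) can be reconstructed from the subvariety $\phi^*(\Jac Y) \subset \Jac X$ together with the principal polarization, this bounds the number of maps. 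Crucially, I would note that two distinct maps $\phi_1, \phi_2$ to the same $Y$ that agree on Jacobians differ by an automorphism of $Y$, and a curve of genus $\geqslant 2$ has finitely many automorphisms (Hurwitz), closing the count.

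\medskip

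For part (2), the target has genus $1$, so $\Jac Y \isom Y$ is an elliptic curve (after choosing a base point), and "up to translation on the target" is precisely what lets us ignore the torsor ambiguity and work with the elliptic curve $\phi^* E \subset \Jac X$. The degree bound $\deg \phi < d$ is what makes the set finite: without it there are infinitely many maps (e.g. post-composing with multiplication-by-$n$ isogenies), so the bounded degree restricts us to finitely many elliptic subvarieties $E \subset \Jac X$ equipped with a polarization of bounded degree, and for each the map $\phi$ is determined up to the translation we have quotiented out. I would phrase this as: the pair $(E, \phi)$ is recovered from the sub-elliptic-curve and the restriction of the theta divisor, and bounded degree bounds the possible polarization type.

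\medskip

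\textbf{The main obstacle} will be the descent to a non-algebraically-closed field $k$ of characteristic zero, rather than the geometry over $\C$. The cleanest route is to first prove finiteness of the \emph{geometric} maps $\bar X \to \bar Y$ over $\kbar$ (where the classical de Franchis--Severi theorem and the Jacobian argument above apply directly), and then observe that the $k$-rational maps form a subset of the Galois-fixed points of this finite geometric set; a finite set has a finite Galois-fixed subset, so finiteness over $k$ is immediate once finiteness over $\kbar$ is known. The subtlety to handle carefully is that a geometric map $\bar X \to \bar Y$ descending to $k$ requires $Y$ itself to descend, but since we are merely \emph{counting} $k$-morphisms and each such morphism base-changes to a distinct geometric morphism, the inclusion of $k$-maps into geometric maps is all that is needed for the finiteness conclusion. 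I would therefore spend the bulk of the write-up on the $\kbar$ statement and dispatch the descent in a sentence, citing Kani's original theorem for the precise bookkeeping.
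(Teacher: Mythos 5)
The paper offers no argument for this theorem at all: its ``proof'' is the citation to Kani's 1986 paper (Theorem~3 and the Corollary after Theorem~4). So your sketch has to stand on its own, and it does not: the central step of part (1) is asserted rather than proved. You claim that Poincar\'e reducibility gives finiteness of abelian subvarieties, and then posit as ``the essential finiteness statement'' that only finitely many abelian subvarieties of $\Jac X$ arise as images $\phi^*(\Jac Y)$. Poincar\'e reducibility only gives finitely many abelian subvarieties \emph{up to isogeny}; the number of honest abelian subvarieties of a fixed dimension can be infinite --- in $E \times E$ the graphs of the multiplication maps $[n]\colon E \to E$ are pairwise distinct elliptic subvarieties. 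Worse, your ``essential finiteness statement,'' as phrased (images of pullbacks of a fixed dimension), is actually \emph{false} without a degree bound: a genus-$2$ curve $X$ with $\Jac X$ isogenous to $E \times E$ admits surjections to $E$ of unbounded degree whose pullback images are infinitely many distinct elliptic subvarieties. What saves part (1), where no degree bound is hypothesized, is Riemann--Hurwitz: a surjection onto a curve of genus $\geqslant 2$ satisfies $2g_X - 2 \geqslant \deg\phi\,(2g_Y - 2) \geqslant 2\deg\phi$, so $\deg\phi \leqslant g_X - 1$ automatically. With the degree bounded, the polarization induced on $\phi^*(\Jac Y)$ by the theta divisor of $\Jac X$ has bounded degree (since $(\phi^*)^\vee \circ \lambda_{\Theta_X} \circ \phi^*$ is $\deg\phi$ times the principal polarization of $\Jac Y$), and one then needs the genuinely nontrivial theorem that a polarized abelian variety contains only finitely many abelian subvarieties whose induced polarization has bounded degree. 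This pair --- the Riemann--Hurwitz bound plus the bounded-polarization finiteness theorem --- is the engine of Kani's proof, and neither appears in your sketch; Poincar\'e reducibility cannot substitute for them.

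Two smaller points. First, the reconstruction step needs care: the restricted polarization on $\phi^*(\Jac Y)$ is not principal and the subvariety only determines $\Jac Y$ up to isogeny, so ``$Y$ is recovered by Torelli'' is not immediate; and in part (2) your descent needs the (easy, but absent) argument that two $k$-morphisms differing by a $\kbar$-translation in fact differ by a $k$-translation --- apply a Galois element $\sigma$ to the identity $\phi_2 = t_c \circ \phi_1$ and use surjectivity of $\phi_1$ to conclude $\sigma(c) = c$. Second, your closing move of ``citing Kani's original theorem for the precise bookkeeping'' is circular as a proof strategy, because the bookkeeping in question --- the finiteness of suitably bounded abelian subvarieties and the count of maps attached to each --- \emph{is} the theorem; at that point you have simply reproduced the paper's citation rather than supplied a proof.
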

\begin{proof}
See \cite{Kani1986}{ Theorem~3} and \cite{Kani1986}{ Corollary~after~Theorem~4}.
\end{proof}

One way of obtaining infinitely many degree \(d\) points on \(X\) is via pullback from an elliptic curve of positive rank.  The following lemma describes a situation in which this is the case.

\begin{lemma}\label{lem:ramification_argument}
Assume that we are in Setup \ref{main_setup} and \(\dim A = 1\).  Let \(D \in A\) be a general divisor.  For every effective divisor $E \in A$, the divisor $E' \colonequals 2D-E$ belongs to $A$ by the abelian translate property. Suppose that there exists a map
\[\pi \colon X \to \pp^1\]
of degree \(2d\) such that all \(E \in A\) are contracted to a point \(\psi(E)\) by \(\pi\) and further \(\psi(E) = \psi(E')\).  Then there exists a degree \(d\) map \(\pi'\colon X \to A\) factoring the map \(\pi = \psi \circ \pi'\).
\end{lemma}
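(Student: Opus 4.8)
The plan is to read off the fibres of $\pi$ from the abelian–translate structure, and then to extract $\pi'$ from the \emph{ramification} of $\pi$ over the fixed points of the involution $E \mapsto E'$. First I would record the geometry of that involution. Since $A(k)$ is Zariski dense, $A$ has a rational point and is an elliptic curve; taking $D$ as origin, the map $\iota\colon E \mapsto E' = 2D - E$ is the elliptic involution, with exactly four fixed points $E_0^{(1)}, \dots, E_0^{(4)}$, namely the divisors with $[E_0^{(j)}] = [D] + \eta_j$ for $\eta_j \in A^0[2]$. For $D$ general these $E_0^{(j)}$ are general points of $A$, hence reduced of degree $d$.

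The key computation is the fibre identity $\pi^{-1}(\psi(E)) = E + E'$ for general $E$. Indeed $E$ and $E'$ are both contracted to $\psi(E) = \psi(E')$, so $E + E'$ is an effective divisor of degree $2d$ supported in the fibre $\pi^{-1}(\psi(E))$; by \eqref{eq:disjoint_support} a general such pair is reduced with disjoint support, and since $\deg \pi = 2d$ this forces equality. As $[E+E'] = 2[D]$ by the abelian translate property, this gives $\O(2D) \isom \pi^*\O_{\PP^1}(1)$. Letting $E \to E_0^{(j)}$ (so that $E' \to E_0^{(j)}$) and using flatness of $\pi$, the fibre over $v_j \colonequals \psi(E_0^{(j)})$ equals $2E_0^{(j)}$; thus $\pi$ is ramified to order exactly two at each of the $d$ points of $E_0^{(j)}$. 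Note the $v_j$ are distinct, since the distinct divisors $2E_0^{(j)} = \pi^{-1}(v_j)$ are fibres of $\pi$.

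Now comes the ramification argument proper. The branch divisor $v_1 + \dots + v_4$ has degree $4$, and its $\pi$-preimage is $2\big(E_0^{(1)} + \dots + E_0^{(4)}\big)$, twice an effective divisor. Moreover
\[
    \big[\,E_0^{(1)} + \dots + E_0^{(4)}\,\big] = 4[D] + \sum_{j} \eta_j = 4[D],
\]
because the four $2$-torsion points of an elliptic curve sum to zero; hence this divisor lies in $|4D|$ and the double cover of $X$ obtained by pulling back $\nu\colon Y \to \PP^1$ (the double cover branched exactly along $\{v_1, \dots, v_4\}$) splits. Therefore $\pi$ lifts through $\nu$. Since a double cover of $\PP^1$ branched at four points is an elliptic curve, $Y$ is elliptic, and the lift $\pi'\colon X \to Y$ satisfies $\nu\circ\pi' = \pi$ and $\deg\pi' = \deg\pi/\deg\nu = d$. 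This already produces a degree $d$ map to an elliptic curve factoring $\pi$.

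The final step, which I expect to be the main obstacle, is to identify $(Y,\nu)$ with $(A,\psi)$ — equivalently, to show $\deg\psi = 2$. The cover $\nu$ induces a splitting $\pi^{-1}(t) = (\pi')^{-1}(y) + (\pi')^{-1}(\bar y)$ of each fibre, where $\nu^{-1}(t) = \{y,\bar y\}$, and I must match it with the splitting $E + E'$. Since a general $E \in A$ is contracted by $\pi$, its image under $\pi'$ lies in $\nu^{-1}(\psi(E)) = \{y_E,\bar y_E\}$; if this image is the single point $y_E$, then $E \leqslant (\pi')^{-1}(y_E)$ and equality of degrees gives $E = (\pi')^{-1}(y_E)$, so the members of $A$ are exactly the fibres of $\pi'$. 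Their classes then sweep out the elliptic curve $\{[(\pi')^{-1}(y)] : y \in Y\}$, which must equal $A$ (both are irreducible curves in $\Pic^d X$), giving $Y \isom A$, $\nu = \psi$, and $\deg\psi = 2$. The one remaining possibility is that a general $E$ is split between the two conjugate fibres of $\pi'$; excluding this is exactly, via Lemma \ref{two-divisors-through-a-point}, the statement that a general point of $X$ lies on a unique member of $A$, and it is where the genericity of $D$ and the equality $h^0(X,\O(E)) = 1$ for general $E \in A$ must be used decisively.
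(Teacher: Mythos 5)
Your first three steps are sound and essentially reproduce the groundwork of the paper's own proof: the fibre identity $\pi^{-1}(\psi(E)) = E + E'$ for general $E$, the consequence $\pi^*\O_{\pp^1}(1) \cong \O_X(2D)$, and the computation $\pi^{-1}(v_j) = 2E_0^{(j)}$ at the four fixed members (the paper phrases this as: the points of the two-torsion translates $D'$ of $D$ are ramification points of $\pi$ and $\pi^{-1}(\psi(D')) = \supp D'$). The double-cover construction of $Y$ is a legitimate further device, modulo two small points: over $k$ one must take the correct quadratic twist of $z^2 = f$, since $\pi^*f$ is a priori only a square up to a constant in $k^*$; and $Y$ is then only a genus-one curve, not yet an elliptic curve, until it is identified with $A$. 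Granting this, you do obtain \emph{some} degree $d$ map $\pi'\colon X \to Y$ to a genus-one curve factoring $\pi$.

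The genuine gap is your final step, which you correctly flag as the main obstacle but do not close. The lemma demands a map to $A$ factoring $\pi$ through $\psi$, so you must show the fibres of $\pi'$ are the members of $A$, i.e.\ exclude the ``split'' scenario. This is not a routine genericity check: a degree $d$ map to a genus-one curve factoring $\pi$ can perfectly well have fibres transverse to a given family of contracted divisors (for a curve $X$ inside a fibre product $A_1 \times_{\pp^1} A_2$, the fibres of one projection are split evenly between the fibres of the other, and both factor the same map to $\pp^1$), and nothing in your argument rules this out for your $Y$. Moreover, the appeal to Lemma \ref{two-divisors-through-a-point} is backwards: under the hypothesis that $X$ is \emph{not} a degree $d$ cover of $A$, that lemma asserts a general point of $X$ lies on \emph{at least two} members of $A$; it can never prove uniqueness, and indeed ``a general point lies on a unique member of $A$'' is, via the proof of that very lemma, essentially equivalent to the conclusion you are trying to establish --- so as written this step is circular. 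The missing ingredient is exactly what the paper supplies at this juncture: since $D$ is general, no non-reduced member of $A$ meets $\supp E_0^{(j)}$, so the only member of $A$ supported on $\pi^{-1}(v_j) = \supp E_0^{(j)}$ is $E_0^{(j)}$ itself; hence $\psi\colon A \to \pp^1$ is \emph{totally ramified} over the four points $v_j$, and Riemann--Hurwitz on the genus-one curve $A$ forces $\deg\psi = 2$. Only with this bound does one know that $\psi^{-1}(t) = \{E, E'\}$ for general $t$, so that a general point of $X$ lies on a unique member of $A$ and the assignment $P \mapsto (\text{unique member through } P)$ is the desired $\pi'$ with $\pi = \psi\circ\pi'$. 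Your proposal never bounds $\deg\psi$, and without that (or an equivalent monodromy statement pinning the partition $\{E, E'\}$ to an intermediate cover), the identification $Y \cong A$ cannot be completed.
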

\begin{proof}
The morphism $\psi \colon A  \to \pp^1$ that sends a divisor $E \in A$ to the point $\pi(E)$ evidently factors through the quotient by the involution sending \(E\) to \(2D - E\).  By computing the ramification of \(\psi\), we will show that \(\psi\) has degree \(2\) and is hence equal to this quotient map.  As a result, the original map \(\pi\) factors \(X \xrightarrow{d:1} A \xrightarrow{\psi} \pp^1\), and \(X\) is a degree \(d\) cover of the elliptic curve \(A\).

Since \(\dim A = 1\), we can extend an inverse of the Abel--Jacobi map to a regular map \(A \to \Sym^d X\).  Since the effective divisor corresponding to a general point of \(A\) is reduced, the union of the supports of all nonreduced divisors from \(A\) is finite.  In particular, we may assume that \(D\), and all of its translates \(D'\) by the finitely many \(2\)-torsion points of \(A^0\), are disjoint from this finite set. 
The points of $D'$ are ramification points of $\pi$, and in particular we have the equality of sets $\pi^{-1}(\psi(D'))=\supp D'$. Since no nonreduced divisor intersects $\supp D' = \pi^{-1}(\psi(D'))$, any divisor from \(A\) supported on the fiber $\pi^{-1}(\psi(D'))$ is equal to $D'$. Therefore the map $\psi \colon A \to \pp^1$ is totally ramified over the $4$ points of the form $\psi(D')$ satisfying $2D'=2D$. Since $A$ has genus $1$ and $\psi$ is totally ramified over at least $4$ points, the Riemann--Hurwitz formula gives
    \[0=-2 \deg \psi + \sum_P (e_P -1) \geqslant -2 \deg \psi + 4(\deg \psi -1)=2 \deg \psi - 4.\]
    Therefore $\deg \psi =2$. This means that there are exactly two divisors from $A$ supported on a general fiber of $\pi$. 
    Recall the incidence correspondence $I\subset X \times U$ given by formula \eqref{eqtn:incidence} that represents the relation ``point belongs to a divisor''. Since we just saw that a general point of $X$ belongs to a unique divisor from $U$, the correspondence $I$ is a graph of a rational map $\pi':X \to A$. The map $\pi'$ represents the association $P \mapsto (\text{unique } D \in A\text{ with } P\in D)$ defined on an open dense subset of $X$, and gives the desired factorization $\pi=\psi \circ \pi'$.
\end{proof}

\begin{proposition}\label{prop:elliptic-exceptions} 
Suppose we are in Setup \ref{main_setup} and additionally that \(X\) is not a degree \(d\) cover of an elliptic curve.  Then for any divisor class $D \in A(k)$ the linear system \(|2D|\) is basepoint free and $\dim |2D| \geqslant \max(2, \dim A)$. 
\end{proposition}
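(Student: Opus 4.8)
The plan is to exhibit inside $|2D|$ the explicit family of divisors supplied by the abelian translate property, and to extract all three conclusions from the geometry of this family. For a fixed $D \in A(k)$ write $\iota \colon A \to A$ for the involution $[D_1] \mapsto 2[D] - [D_1]$, which is well-defined by the abelian translate property and squares to the identity since $2[D]-(2[D]-[D_1]) = [D_1]$. For $[D_1]$ in a dense open of $A$ the class $2D - D_1$ has a unique effective representative $\iota(D_1) \in A$, and $D_1 + \iota(D_1)$ is then an effective divisor of degree $2d$ in the class of $2D$; this produces a rational map $\mu \colon A \dashrightarrow |2D|$, $[D_1] \mapsto [D_1 + \iota(D_1)]$, whose image is a family of divisors in $|2D|$.

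First I would prove basepoint-freeness. A base point $P$ of $|2D|$ would lie on every divisor $D_1 + \iota(D_1)$, i.e.\ $P \in \supp D_1$ or $P \in \supp \iota(D_1)$ for all $D_1$. Setting $Z_P \colonequals \{[D_1] \in U : P \in \supp D_1\}$, this says $A = \overline{Z_P} \cup \iota(\overline{Z_P})$. Now $Z_P$ is the image in $A$ of the fiber over $P$ of the incidence correspondence $I \to X$; since $I \to U$ is finite of degree $d$, the variety $I$ has pure dimension $\dim A$, so a fiber of $I \to X$ of dimension $\dim A$ would force a whole component of $I$ to be contracted to $P$. As that component surjects onto $U$, this would make $P$ a common point of all divisors of $A$, which is impossible by \eqref{eq:disjoint_support}. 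Hence $\dim Z_P \leqslant \dim A - 1$, so $\overline{Z_P} \cup \iota(\overline{Z_P})$ is a proper closed subset of $A$. Any $[D_1]$ in the nonempty complement, chosen moreover so that $D_1$ and $\iota(D_1)$ are the unique effective representatives of their classes, yields a member of $|2D|$ missing $P$. Thus $|2D|$ is basepoint free.

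Next, the bound $\dim |2D| \geqslant \dim A$ follows by showing $\mu$ is generically finite onto its image. A fiber $\mu^{-1}([E])$ consists of those $[D_1] \in A$ with $D_1 \leqslant E$; since $E$ has finite support there are only finitely many effective degree-$d$ subdivisors of $E$, so the fibers are finite and $\dim(\im \mu) = \dim A$. This already gives $\dim|2D| \geqslant \max(2,\dim A)$ whenever $\dim A \geqslant 2$.

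The remaining and most delicate case is $\dim A = 1$, where $A$ is an elliptic curve and, by hypothesis, $X$ is not a degree $d$ cover of $A$; here I must improve $\dim|2D| \geqslant 1$ to $\dim|2D| \geqslant 2$. I would first reduce to general $D$: the function $[D] \mapsto h^0(X,\O(2D))$ is upper semicontinuous on $A$, being the pullback along $[D] \mapsto 2[D]$ of the corresponding function on $\Pic^{2d}X$, so once $\dim|2D| \geqslant 2$ is known on a dense subset it holds for every $D \in A(k)$. For general $D$, suppose to the contrary that $\dim|2D| = 1$. Then $|2D|$ is a basepoint-free $g^1_{2d}$ defining a morphism $\pi \colon X \to \PP^1$ of degree $2d$ whose fibers are exactly the members of $|2D|$. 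Each divisor $D_1 + \iota(D_1)$ is such a fiber, so $\pi$ contracts every $E \in A$ to a point $\psi(E)$ and satisfies $\psi(E) = \psi(2D - E)$. This is precisely the hypothesis of Lemma \ref{lem:ramification_argument}, which then forces $X$ to be a degree $d$ cover of the elliptic curve $A$, contradicting our assumption. Hence $\dim|2D| \geqslant 2$ for general, and therefore for all, $D$. The crux of the argument is this last step: recognizing that the failure of $\dim|2D| \geqslant 2$ manifests exactly as the contraction–ramification configuration of Lemma \ref{lem:ramification_argument}, with the semicontinuity reduction needed to promote its general-$D$ statement to the \emph{for any $D$} claim made here.
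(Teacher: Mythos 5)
Your proof is correct and takes essentially the same route as the paper's: you use the family of divisors $D_1 + (2D - D_1)$ supplied by the abelian translate property to get basepoint-freeness and $\dim|2D| \geqslant \dim A$, reduce to general $D$ by semicontinuity, and rule out $\dim|2D| = 1$ in the case $\dim A = 1$ by invoking Lemma \ref{lem:ramification_argument}, exactly as the paper does. The only difference is cosmetic: you give a self-contained dimension-count argument (via the incidence correspondence and \eqref{eq:disjoint_support}) for the basepoint-freeness and generic-finiteness steps that the paper disposes of by citing \cite[Lemma 1]{Abramovich-Harris1991} with a brief parenthetical sketch.
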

\begin{proof}
By \cite[Lemma 1]{Abramovich-Harris1991}, \(|2D|\) is basepoint free and \(\dim |2D| \geqslant\dim A\).  (Indeed, by the abelian translate property, for all \(E \in A\), the class \(2D - E\) is effective.  The association \(E \mapsto E + |2D - E|\) defines a \((\geqslant \dim A)\)-dimensional family of effective divisors in class \(2D\). Since the divisors $E \in U$ don't have a shared point, the family of divisors of the form $E \cup [2D-E]$ do not have any common points either, and so $|2D|$is base point free.)  This completes the proof when \(\dim A > 1\), so we assume for the remainder of the proof that \(\dim A = 1\). 

Since $\dim |2D|$ is upper-semicontinuous, it suffices to prove the result for a general $D \in A$.  
Suppose that $\dim |2D|=1$, and let $\phi: X \to \PP^1$ be the associated map. For every effective divisor $E \in A$, the divisor $2D-E$ belongs to $A$ by the abelian translate property, and therefore is effective. Hence every divisor $E \in A$ is supported on a fiber of $\phi$ and both \(E\) and \(2D-E\) are supported on the same fiber.  Therefore the assumptions of Lemma \ref{lem:ramification_argument} are satisfied, which is a contradiction since we assumed that \(X\) is not a degree \(d\) cover of an elliptic curve.
\end{proof}
\begin{remark}\label{rem:gonality-1}
Abramovich and Harris, and, independently, Frey \cite{frey1994}, observed that the gonality of a curve with $\min(\dendegs(X/k)) = d$ is at most $2d$. An immediate corollary of Proposition \ref{prop:elliptic-exceptions} is that the geometric gonality of a curve with $\min(\dendegs(X/k)) = d$ that is not a degree $d$ cover of an elliptic curve is at most $2d-1$.
\end{remark} 
We now prove the main result of this section.
\begin{theorem}\label{2D-is-birational}
Suppose that we are in Setup \ref{main_setup} and \(D \in U(k)\) is a general divisor. Then one of the following holds:
\begin{enumerate}
    \item\label{non-birational} there exists a covering of curves $\phi \colon X \to Y$ of degree at least \(2\) with $\min(\dendegs(Y/k)) = d/ \deg \phi$;
        \item\label{birational} the associated map $X \to \PP^{\dim |2D|}$ is birational. (The basepoint free line bundle \(|2D|\) is birationally very ample).

\end{enumerate}
\end{theorem}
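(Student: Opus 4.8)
The plan is to build the cover required by case \eqref{non-birational} out of the Stein factorization of the morphism attached to $|2D|$, and to show that the divisors parametrized by $A$ are precisely the fibers of that cover.

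First I would reduce to the basepoint-free case. By Proposition \ref{prop:elliptic-exceptions} we may assume $X$ is not a degree $d$ cover of an elliptic curve; when such a cover exists it already realizes case \eqref{non-birational} (in the relevant situation $\dim A = 1$ the target is $A$ itself, which has positive rank, so $\airr_k A = 1 = d/d$). Thus $|2D|$ is basepoint free and defines a morphism $\phi \colon X \to \PP^{\dim|2D|}$ with $\dim|2D| \geqslant \max(2,\dim A)$. If $\phi$ is birational onto its image we are in case \eqref{birational}, so assume it is not, and take the Stein factorization $X \xrightarrow{\psi} Y \xrightarrow{\bar\iota} \PP^{\dim|2D|}$, where $\psi$ is a morphism of nice curves of degree $m \geqslant 2$ with connected fibers and $\bar\iota$ is birational onto its image; comparing degrees of hyperplane pullbacks gives $\deg \phi(X) = 2d/m$.

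The heart of the matter is the claim that a \emph{general} $E \in A$ is a union of fibers of $\psi$, i.e.\ $E = \psi^*E_Y$ for an effective divisor $E_Y$ of degree $d/m$ on $Y$. Granting this, $E_Y$ is a single degree-$(d/m)$ closed point of $Y$ (because $E$ is a degree-$d$ closed point and $\psi$ has connected fibers), and distinct $E$ yield distinct $E_Y = \psi(\supp E)$; hence $Y$ has infinitely many degree-$(d/m)$ points and $\airr_k Y \leqslant d/m$. Conversely, pulling degree-$e$ points of $Y$ back along $\psi$ produces degree-$(em)$ points of $X$, so $d = \airr_k X \leqslant m\cdot \airr_k Y$. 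Combining the two inequalities gives $\airr_k Y = d/m$ with $\deg \psi = m \geqslant 2$, which is exactly case \eqref{non-birational}.

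To prove the claim I would work one fiber at a time. For $E \in A$ put $E' \colonequals 2D - E \in A$; since $E + E'$ lies in the complete basepoint-free system $|2D|$ it equals $\phi^*H = \psi^*(\bar\iota^*H)$ for a hyperplane $H$, and is therefore a union of $\psi$-fibers, while for general $E$ the divisors $E$ and $E'$ are reduced with disjoint support (the opposite locus is a proper closed subvariety of $A$, as in the proof of Lemma \ref{lem:ramification_argument}). Thus any fiber $F$ meeting $\supp(E+E')$ splits as $F = (F\cap E)\sqcup(F\cap E')$, and I must exclude a proper split. Fix two points $p_1,p_2$ of such a fiber; as $\phi(p_1)=\phi(p_2)$, every divisor $E_0 \in A$ through $p_1$ contains $p_2$ in either $E_0$ or $2D-E_0$. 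Writing $A_p$ for the (codimension-one) locus of divisors of $A$ through $p$ and $\sigma\colon E_0 \mapsto 2D-E_0$ for the natural involution, this reads $A_{p_1} \subseteq A_{p_2}\cup \sigma(A_{p_2})$; on the irreducible component of $A_{p_1}$ meeting the general such fiber (irreducibility coming from the transitive monodromy of the incidence correspondence $I \to U$) we get either $A_{p_1}\subseteq A_{p_2}$, the \emph{synchronized} case in which $p_1\in E \iff p_2\in E$ and the fiber is unsplit, or $A_{p_1}\subseteq \sigma(A_{p_2})$, the \emph{anti-synchronized} case $p_1 \in E \iff p_2 \in 2D-E$.

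The main obstacle is the anti-synchronized case, where the sheet-swap of $\psi$ is intertwined with $\sigma$. Here I would argue that $\psi$ factors through the quotient of $X$ by an involution across which the divisors of $A$ are exchanged, placing us in the setting of Lemma \ref{lem:ramification_argument} and exhibiting $X$ as a degree $d$ cover of the elliptic curve $A$ --- contradicting our standing assumption. The genuinely delicate point, and precisely where the argument of \cite{Abramovich-Harris1991} breaks down, is that this quotient may be a \emph{pointless conic} rather than $\PP^1$: the ramification count of Lemma \ref{lem:ramification_argument} must then be redone with branch points defined over extensions of $k$, and the synchronization dichotomy must be propagated across all $\binom{m}{2}$ pairs of points in a fiber (rather than the transparent case $m=2$) to conclude that no fiber is split. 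I expect this pointless-conic analysis to be the crux of the whole proof.
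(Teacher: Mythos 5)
Your skeleton---factor the non-birational map through its image, show that a general $E \in A$ is a union of fibers of the resulting cover $\psi$, and conclude that $\psi$ itself realizes case \eqref{non-birational}---reproduces the easy half of the paper's argument, and you are right that the degree-$d$ cover of a pointless conic is the crux. But your key claim, that a general $E \in A$ is a union of fibers of $\psi$, is exactly what is \emph{false} in that case, and your proposed remedy for the anti-synchronized configuration cannot work. When $Y$ is a pointless conic and $\deg\psi = d$, every $E \in A$ maps \emph{onto} a degree-two closed point of $Y$: indeed $E + (2D - E) = \psi^*(\text{line section})$, and since $Y(k) = \emptyset$ the image $\psi(E)$ must have degree exactly $2$, so each of the two geometric fibers involved splits evenly between $E$ and $2D-E$. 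This is a consistent geometric configuration, not a contradiction, and it does not force $X$ to be a degree $d$ cover of an elliptic curve. Lemma \ref{lem:ramification_argument} requires a degree $2d$ map to $\pp^1$ that \emph{contracts} every $E \in A$ to a single point; no such map can be manufactured here, since composing $\psi$ with a projection $Y \to \pp^1$ sends $E$ to a degree-two point of $\pp^1$, not to a geometric point. So the hypotheses of that lemma are unattainable---and no contradiction should be attainable, because in this situation case \eqref{non-birational} does hold, but it is realized by a \emph{different} cover than $\psi$.

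That different cover is what the paper constructs and is the genuinely new content of its proof: one takes two general divisors $D_1, D_2$, the two associated conic maps $f_{D_i}\colon X \to Y_{D_i}$, and the product map $\phi = (f_{D_1}, f_{D_2})\colon X \to Z \subset Y_{D_1} \times Y_{D_2}$, whose image has symmetric bidegree $(e,e)$. Showing $e \geqslant 2$ (and $e \geqslant 4$ when $\deg\phi(E) = 4$) gives $\deg \phi \leqslant d/2$ (respectively $d/4$) while $Z$ retains infinitely many degree $2$ (respectively degree $4$) points, yielding case \eqref{non-birational}; the residual possibility $d=4$ with $\phi$ birational onto a $(4,4)$-curve is then excluded by a genus computation together with Mumford's extension of Martens' theorem. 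None of this is recoverable from a synchronization analysis tied to the single map $\psi$, so the gap in your proposal is not a deferred computation but a missing construction. (Two smaller points: ``Stein factorization with connected fibers'' is the wrong notion---a finite cover of curves of degree $m \geqslant 2$ has no connected fibers; you just want the map to the normalization of the image, as in the paper. Also, in the case where the image has genus at least $1$, your device of fixing one $D$ and varying $E$ inside $|2D|$ is a legitimate simplification that avoids the paper's appeal to Kani's theorem, but only once the fiber claim is established, which is the part that fails.)
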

\begin{proof}    
By Theorem \ref{thm:kani}, $X$ has only finitely many nonconstant maps $f_1, ... , f_N$ of degree at most \(d\) to curves of genus $\geqslant 1$ up to automorphisms of the base. Since $D$ is general, we can assume that $D$ does not intersect the preimage of the branch locus of any of the $f_i$. 

Suppose that case \eqref{birational} does not hold, i.e., that the morphism $X \to \PP^{|2D|}$ factors as $X \xrightarrow{f} Y \hookrightarrow \PP^{|2D|}$ with $m\colonequals \deg f \geqslant 2$. Then we will show that case \eqref{non-birational} holds.  Note that it suffices to show that \(\min(\dendegs(Y/k)) \leqslant d/\deg \phi\), since \(X\) has finitely many points of degree less than \(d\). Write $\widetilde{f}: X \to \widetilde{Y}$ for the map to the normalization of \(Y\).  Since the nondegenerate curve $Y \subset \PP^{|2D|}$ has degree at least $\dim |2D|\geqslant 2$, the degree of $\widetilde{f}$ is at most $d$. 

First suppose that the genus of $\widetilde{Y}$ is at least $1$. By assumption, the divisor $D$ has trivial intersection with the preimage of the branch locus of the map \(\widetilde{f}\). Observe that for any curve $C$ and any effective divisor $\Delta$ the following property holds: if for some positive $k$ the linear system $k\Delta$ is base-point free and $\Phi: C \to \PP^N$ is the associated morphism, then we have the equality of sets $\Phi^{-1}(\Phi (\Delta)) = \Delta$. Applying this to $\Delta=D$ and $k=2$ gives $\widetilde{f}^{-1}\left(\widetilde{f}(D)\right)=D$. Since $D$ does not intersect the preimage of the branch locus, we have
\[d = \#D = \#\widetilde{f}^{-1}\left(\widetilde{f}(D)\right)=m\# \widetilde{f}(D).\] 
Hence the image of $D$ in $\widetilde{Y}$ is a point of degree equal to $d/m$.  Since there are infinitely many choices of $D$ and only finitely many choices for the morphism $\widetilde{f}$, by Theorem \ref{thm:kani}, there exists a map $\widetilde{f}:X \to \widetilde{Y}$ of degree $m$  such that for infinitely many $D \in \Sym^d X(k)$ the image $\widetilde{f}(D)$ has degree $d/m$, in which case (\ref{non-birational}) holds. 

Now consider the case that $\widetilde{Y}$ has genus $0$.  Since a genus $0$ curve has infinitely many quadratic points either (\ref{non-birational}) holds, or $m = \deg \widetilde{f} > d/2$, and so $\deg Y=2d/m < 4$. If $\deg Y=3$ and the genus of $\widetilde{Y}$ is zero, then \(Y\) has odd degree points, and thus $Y=\PP^1$; this implies that case (\ref{non-birational}) holds. 
    
It therefore remains to consider the case $\deg Y=2$, $\dim |2D|=2$, $Y$ is a smooth pointless conic and $f$ is a covering of degree $d$. In this way, a general divisor $D \in U$ defines a map to a rational curve $f_D: X \to Y_D$; let $U' \subset U$ be the open subset of divisors that define such maps. For every pair $(D_1, D_2) \in U' \times U'$ we get a map $\phi_{D_1, D_2}=(f_{D_1}, f_{D_2}): X \to Y_{D_1}\times Y_{D_2}$. Let $Z_{D_1, D_2}$ denote its image. By semicontinutity, $Z_{D_1, D_2}$ will have a constant bidegree \((e_1,e_2)\) on an open dense subset of $U' \times U'$, and comparing the degrees of $Z_{D_1, D_2}$ and $Z_{D_2, D_1}$ we see that the bidegree is necessarily symmetric: $e_1=e_2=e$.  
To simplify notation, fix a general pair \((D_1, D_2) \in U' \times U'\) and write \(Y_1 = Y_{D_1}\), \(Y_2 = Y_{D_2}\), \(f_1 = f_{D_1}\), \(f_2 = f_{D_2}\), \(\phi = \phi_{D_1, D_2}\), and \(Z = Z_{D_1, D_2}\).
In what follows, we will show that \(e>1\), so that the map \(\phi \colon X \to Z\) has degree \(d/e \leqslant d/2\), and that the images \(\phi(E)\) of divisors \(E \in U(k)\) have low enough degree to force us to be in case \eqref{non-birational}.

    Since the line bundles \(\O_X(D_1) = f_{1}^*\O_{\pp^1}(1)\) and \(\O_X(D_2) = f_{2}^*\O_{\pp^1}(1)\) are distinct, there does not exist an automorphism of \(\pp^1\) bringing one to the other and thus \(Z\) cannot be a \((1,1)\)-divisor.  Hence, the degree \(e\) of the projection from $Z$ to $Y_{1}$ and $Y_{2}$ is at least $2$. 
    For a general divisor $E\in U(k)$, the divisors \(2D_i - E\) are effective by the abelian translate property.  Therefore $f_i(E)$ is contained in the hyperplane section of a (pointless) conic and hence $\deg f_1(E)=\deg f_2(E)=2$, and so $\deg \phi(E) \leqslant 4$.  Since the map $f_1$ factors through $\phi$ and $\deg  f_1(E) =2$, the degree of $\phi(E)$ is either $2$ or $4$.

\smallskip
    
\noindent
\textbf{\boldmath Case 1: $\deg \phi(E)=2$ for infinitely many \(E \in U(k)\).} Since \(\deg(\phi) \leqslant d/2\), we are in evidently in case \eqref{non-birational}, unless \(d=2\) and the map \(\phi\) is birational onto its image.  However an integral \((2,2)\)-curve (which necessarily has geometric genus \(0\) or \(1\)) with infinitely many degree \(2\) points is always a degree \(2\) cover of \(\pp^1\), so we are again in case \eqref{non-birational}.
    
\smallskip
   
\noindent
\textbf{\boldmath Case 2: $\deg \phi(E)=4$ for general \(E \in U\).} In this case, we will show that \(e\geqslant 4\), and hence we are in case \eqref{non-birational} unless \(d=4\) and the map \(\phi\) is birational onto its image. Consider a general $E \in A$ and the divisors $f_2(E)$ and $f_2(2D_1-E)$ on $Y_2$. Each one is a degree $2$ point on $Y_2$, and as $D_1$ varies the point $f_2(2D_1-E)$ will vary as well. Since $D_1, D_2$ are a general pair, we can assume that the divisors $f_2(E)$ and $f_2(2D_1-E)$ are disjoint for general \(E \in A\), and hence the divisors $\phi(E), \phi(2D_1-E) \in Z$ are necessarily disjoint degree \(4\) divisors. Since $f_1(E)=f_1(2D_1-E)$, the projection
 of the degree \(8\) divisor \(\phi(E)+\phi(2D_1-E)\) to \(Y_1\) is supported on a degree $2$ point $f_1(E)$. Therefore the degree $e$ of the projection $Z \to Y_1$ is at least $4$, and hence the degree of $\phi: X \to Z$ is at most $\deg f_1 / 4 = d/4$. If $\phi$ is not birational onto its image, then since $\deg \phi \leqslant d/4$ and $Z$ has infinitely many degree $4$ points, case (\ref{non-birational}) holds. 
    
    It remains to consider the case when \(d=4\) and $X$ is birational to a $(4,4)$ curve $Z$ on $Y_1 \times Y_2$. If $Z$ is smooth, then $X=Z$ and the projections onto \(Y_1\) and \(Y_2\) give the only two degree \(4\) maps from $\bar{X}$ to \(\pp^1\) \cite[Chapter IV, Exercise F-2]{ACGH}.  This is a contradiction, since the infinite family of \(D_i\) define distinct maps. Therefore $Z$ is singular. Since $Y_1$ is pointless, the singular locus of $Z$ has to have cardinality $2$ or $\geqslant 4$, for otherwise the projection of the singular locus would be a zero-cycle of odd degree on $Y_1$. Therefore the genus of $X$ is either $7$ or at most $5$. Since $\dim |2D|=2$, the Riemann--Roch theorem implies that $g(X)=7$. Consider now the geometric curve $\bar{X}$. By Mumford's extension of Marten's theorem (see \cite[Chapter IV, Theorem 5.2]{ACGH}), since the curve $\bar{X}$ has a positive-dimensional family of \(g^1_4\)'s, it is either hyperelliptic, trigonal, bielliptic, or a smooth plane quintic.  Since \(g(X) = 7\), it is not isomorphic to a smooth plane quintic. If \(\bar{X}\) is hyperellitpic, then \(X\) is a degree \(2\) cover of a conic, which has infinitely many degree \(2\) points, and we are in case (1).  If $\bar{X}$ is trigonal, then the associated $(3,4)$ map onto $\PP^1\times Y_2$ has to be birational onto its image (since $4$ and $3$ are relatively prime), contradicting $g(X)=7$. Therefore $\bar{X}$ is bielliptic, so there is a degree \(2\) covering $\phi: \bar{X} \to C$, for an elliptic curve \(C\). Consider the composite map $\bar{X} \to C \times Y_2$. Since $X$ has genus $7$, it cannot be birational to a $(2,4)$ curve (genus would be at most \(5\)) on $C \times Y_2$, therefore the morphism $\bar{X} \to Y_2$ factors through $\phi$. Similarly, the morphism $\bar{X} \to Y_1$ factors through $\phi$, contradicting the birationality of $X \to Z$.
\end{proof}
Motivated by Theorem \ref{2D-is-birational} we make the following definition.

\begin{definition}
Suppose $X/k$ is a curve with $\min(\dendegs(X/k)) = d$. We say that $X$ is \defi{$d$-minimal} if there does not exist a covering of curves $\pi: X \to Y$ of degree at least $2$ such that $\min(\dendegs(Y/k)) \deg \pi = d.$ 
\end{definition}

The problem of understanding the minimum density degree is reduced, by Theorem \ref{2D-is-birational}, to analyzing the geometry of $d$-minimal curves. Note that Theorem \ref{2D-is-birational} already gives us some control over this geometry: since a $d$-minimal curve $X$ has a birational embedding of degree $2d$, the genus of $X$ is bounded by $(d-1)(2d-1)$.  We will prove a stronger genus bound in Theorem \ref{thm:nD_lower_bound} below. 

In our analysis of $d$-minimal curves $X$ we occasionally need to use hyperbolicity of $X$; the following lemma allows us to do so.
\begin{lemma}\label{lem:genus-at-least-2}
Suppose $X$ is a $d$-minimal curve with $d \geqslant 2$. Then the genus of $X$ is at least $3$. 
\end{lemma}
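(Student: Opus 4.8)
The plan is to argue by contradiction. Assuming $g(X) \leqslant 1$, I will produce a covering $\phi \colon X \to \pp^1$ of degree $d$ defined over $k$; since $\airr_k \pp^1 = 1$ and $d \geqslant 2$, such a map satisfies $\airr_k \pp^1 \cdot \deg \phi = d = \airr_k X$, contradicting the $d$-minimality of $X$. Thus the whole difficulty is to exhibit a degree-$d$ map to $\pp^1$ over $k$ on any curve of genus at most $1$ with $\airr_k X = d \geqslant 2$.

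\smallskip

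First I would extract the divisor. Since $\airr_k X = d$, there is a degree-$d$ closed point, that is, a $k$-rational effective divisor $D$ with $\deg D = d$. Because $g(X) \leqslant 1$ and $d \geqslant 2$, we have $\deg D = d \geqslant 2 \geqslant 2g$, so $\O_X(D)$ is globally generated; Riemann--Roch (using $\deg(K-D) = 2g-2-d < 0$) gives $\dim |D| = d - g \geqslant 1$. So $|D|$ is a base-point-free $k$-rational linear system of positive dimension.

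\smallskip

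The key step is then to pass to a base-point-free pencil inside $|D|$ over $k$. When $\dim|D| = 1$ there is nothing to do: $W \colonequals H^0(X, \O_X(D))$ is itself the pencil and it is base-point free. When $N \colonequals \dim|D| \geqslant 2$, I would consider, for each $p \in X$, the hyperplane $H^0(X,\O_X(D))_p$ of sections vanishing at $p$ (a hyperplane because $\O_X(D)$ is globally generated), and the incidence variety $\{(W,p) \in \Gr(2, H^0(X,\O_X(D))) \times X : W \subseteq H^0(X,\O_X(D))_p\}$. A dimension count shows its total dimension is $2(N-2) + 1 = 2N-3$, which is strictly less than $\dim \Gr(2, H^0(X,\O_X(D))) = 2(N-1) = 2N-2$; hence the incidence does not dominate the Grassmannian and a general pencil $W$ has empty base locus. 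As $k$ is infinite, $k$-rational pencils are Zariski dense in the Grassmannian, so a general $k$-rational pencil $W$ is base-point free. The associated morphism $\phi_W \colon X \to \pp(W^\vee) \cong \pp^1_k$ is then a finite covering of degree $\deg D = d$.

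\smallskip

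Finally, $\phi_W$ is a degree-$d$ covering of $\pp^1$ with $d \geqslant 2$, and $\airr_k \pp^1 \cdot \deg \phi_W = d = \airr_k X$, contradicting $d$-minimality; therefore $g(X) \geqslant 2$. The main obstacle is the middle step: verifying via the dimension count that a general pencil in $|D|$ is base-point free and that such a pencil can be chosen $k$-rational. Everything else (existence of the degree-$d$ divisor, the genus-$\leqslant 1$ Riemann--Roch computation, and the contradiction with the definition of $d$-minimality) is formal.
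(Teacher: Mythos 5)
Your proof is correct, and it reaches the same contradiction as the paper --- exhibiting a degree-$d$ covering $X \to \pp^1$ over $k$, which violates $d$-minimality since $\airr_k \pp^1 \cdot \deg\phi = d$ --- but by a more uniform route. The paper argues by cases: in genus $0$ it uses the plane conic model (projection from a rational point of the plane gives a degree $2$ map when $X(k) = \emptyset$; otherwise $X \cong \pp^1$, forcing $d = 1$), and in genus $1$ it first disposes of the case of infinitely many rational points (again forcing $d=1$) and then invokes Riemann--Roch on a rational degree-$d$ divisor to conclude that $X$ is a degree-$d$ cover of $\pp^1$. Your argument needs no case division: a degree-$d$ point exists because $\airr_k X = d$, global generation plus Riemann--Roch give a positive-dimensional basepoint-free linear system in either genus, and the incidence-variety dimension count together with density of $k$-points in the Grassmannian produces the $k$-rational basepoint-free pencil. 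That last step is exactly the detail the paper leaves implicit in its genus-$1$ case (the phrase ``so $X$ is a degree $d$ cover of $\pp^1$'' requires selecting such a pencil when $\dim |D| \geqslant 2$), so your write-up is, if anything, more complete; what the paper's case analysis buys instead is brevity, since the genus-$0$ geometry (conics and projections) is available for free and no pencil-selection argument is needed there. One small remark: the paper's inequality ``$H^0(X, \O(D)) \geqslant 1$'' is evidently intended as $h^0(X,\O(D)) \geqslant 2$, which is what your computation $\dim|D| = d - g \geqslant 1$ makes precise.
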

\begin{proof}
If $X$ has genus zero, then it is isomorphic to a plane conic. If $X(k)=\emptyset$, then projection from a rational point on the plane defines a degree $2$ map $X \to \PP^1$ and $\min(\dendegs(X/k)) =2$. If $X(k) \neq \emptyset$, then $X = \PP^1$ and $X$ is $1$-minimal.

If $X$ has genus $1$ and infinitely many rational points, then $X$ is $1$-minimal. Otherwise, by Riemann--Roch, if $D$ is a rational degree $d\geqslant 2$ divisor on $X$, then $\dim H^0(X, \calO(D)) \geqslant 2$, and so $X$ is a degree $d$ cover of $\PP^1$.

If \(X\) has genus \(2\), then \(\min(\dendegs(X/k)) \geq 2\) by Faltings' Theorem.  On the other hand, the canonical linear series exhibits that \(X\) is a degree \(2\) cover of \(\pp^1_k\), for which \(\min(\dendegs(\pp^1/k)) = 1\).
\end{proof}

The birationality of $|2D|$ on a $d$-minimal curve implies a classification of $2$-minimal curves. The resulting theorem is an arithmetic strengthening of \cite[Corollary 3]{Harris-Silverman1991}, which proves that any curve with \(\min(\dendegs(C/k)) = 2\) is a degree \(2\) cover of a genus \(0\) or \(1\) curve.

\begin{theorem}\label{thm:no_2_min}
There are no $2$-minimal curves over any number field.
\end{theorem}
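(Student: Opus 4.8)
The plan is to assume that a $2$-minimal curve $X$ exists and derive a contradiction by forcing it into Setup \ref{main_setup} and then destroying the resulting geometry. The first step is to check that $2$-minimality forces $\gon_k X > 2$: since $\airr_k X = 2$ the curve is not $\PP^1$, so $\gon_k X \geq 2$, and if $\gon_k X = 2$ the $k$-rational $g^1_2$ exhibits $X$ as a degree $2$ cover of $\PP^1$, where $\airr_k \PP^1 = 1$, contradicting $2$-minimality. Hence we are in Setup \ref{main_setup} with $d = 2$, yielding a positive-dimensional abelian translate $A \subset W_2X$ with $A(k)$ Zariski dense; moreover $g(X) \geq 2$ by Lemma \ref{lem:genus-at-least-2}. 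Applying Theorem \ref{2D-is-birational} to a general $D \in U(k)$, case \eqref{non-birational} would produce a cover directly contradicting $2$-minimality, so case \eqref{birational} must hold: $\varphi_{2D}\colon X \to \PP^{\dim|2D|}$ is birational onto a nondegenerate curve $C$ of degree $\deg\O(2D) = 4$.

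Next I would pin down $\dim|2D|$ purely from $g(X) \geq 2$. A nondegenerate irreducible curve of degree $4$ in $\PP^r$ satisfies $4 \geq r$; the cases $r = 1$ and $r = 4$ both force $g = 0$ (for $r=4$ the curve is a rational normal quartic, for $r=1$ birationality gives $C = \PP^1$), while $r = 3$ bounds the geometric genus by $\pi(4,3) = 1$. Each of these contradicts $g \geq 2$, so necessarily $\dim|2D| = 2$ and $C$ is a plane quartic. Since $X$ is the normalization of $C$, its genus equals the geometric genus of $C$, giving $2 \leq g \leq 3$.

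The main obstacle is disposing of this plane quartic case, and I expect the decisive input to be the contrast between the two remaining genera together with the positivity of $\dim A$. If $g = 2$ then $X$ is geometrically hyperelliptic; as the $g^1_2$ is unique for $g \geq 2$ it is Galois-stable, hence $k$-rational, so $\gon_k X = 2$, a contradiction. Thus $g = 3$. But a plane quartic of geometric genus $3$ attains its arithmetic genus and is therefore \emph{smooth}, so $\varphi_{2D}$ is an isomorphism onto $C$ and adjunction gives $\O(2D) \isom \O_C(1) \isom K_X$. The key observation is that this identification $2D \sim K_X$ is forced for \emph{every} general $D \in A(k)$, not just one.

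Finally I would extract the contradiction from the abelian translate property of Section \ref{sec:abelian-translate}. Fixing one general $D$ and letting $D'$ range over the (infinitely many, Zariski dense) general points of $A(k)$, the relation $2D \sim K_X \sim 2D'$ yields $2(D - D') \sim 0$, so that $\O(D)\otimes\O(D')^{-1} \in (\Jac X)[2]$. As $D'$ varies, the class $D - D'$ sweeps out a positive-dimensional translate of $A^0 \subseteq \Jac X$, which cannot be contained in the finite group $(\Jac X)[2]$. This contradiction shows that no $2$-minimal curve exists. Notably, this route relies only on Theorem \ref{2D-is-birational} and elementary genus bounds, and in particular sidesteps the ramification computation of Lemma \ref{lem:ramification_argument}.
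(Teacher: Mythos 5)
Your proof is correct and follows essentially the same route as the paper: invoke Theorem \ref{2D-is-birational} to rule out case \eqref{non-birational} and obtain a birational model of degree $4$, use elementary genus bounds to force the image into $\PP^2$, and kill the smooth-quartic case by observing that $\O(2D) \cong K_X$ cannot hold for the positively-varying family of classes $2D$ (the paper phrases this as ``since $D$ was general, we can assume $2D \neq K_X$''; your $2$-torsion formulation — a Zariski-dense subset of the positive-dimensional $A^0$ cannot lie in the finite group $(\Jac X)[2]$ — is the same observation made explicit). The only divergence is the non-smooth case: the paper projects from the unique (hence Galois-fixed, hence rational) singular point of a geometric-genus-$2$ quartic to get a degree $2$ map to $\PP^1$, while you argue via hyperellipticity of genus-$2$ curves. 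There your phrase ``Galois-stable hence $k$-rational'' is slightly glib: in general a Galois-stable $g^1_2$ class can carry a Brauer obstruction and yield only a degree $2$ map to a conic, not to $\PP^1$. For genus $2$, however, the unique $g^1_2$ is the canonical system $|K_X|$, which is automatically defined over $k$ and maps to $\PP^1_k$, so your conclusion $\gon_k X = 2$ stands; it would be worth stating this identification explicitly.
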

\begin{proof}
Suppose to the contrary that we are in Setup \ref{main_setup} and $X$ is a $2$-minimal curve. By Theorem \ref{2D-is-birational}, for a general divisor $D \in A$ the linear system $|2D|$ is birationally very ample. A nondegenerate degree $4$ curve in $\pp^n$ for $n \geqslant 3$ has genus at most $1$. By Lemma \ref{lem:genus-at-least-2} we can assume that for a general $D$ the linear system $2D$ realizes $X$ as a plane quartic $Y_D \subset \PP^2$.

If $Y_D$ is smooth, then $X=Y_D$ is a canonical genus $3$ curve. In particular $2D=K_X$. Since $D$ was general, we can assume $2D \neq K_X$.
If \(Y_D\) is singular, its geometric genus (the genus of \(X\)) is at most \(2\), so \(X\) cannot be \(d\)-minimal for any \(d\geq 2\) by Lemma \ref{lem:genus-at-least-2}.
\end{proof}
\section{Subspace configurations}\label{sec:subspace-configurations}

We will analyze the geometry of $d$-minimal curves by studying structures (``subspace configurations'') associated to the birational linear systems $|nD|$, where $n \geqslant 2$ is an integer and $D$ is a degree $d$ point on $X$. We first establish notation for and basic properties of these objects, building to a proof of Theorem \ref{genus-bound}. We summarize the discrete-geometric structure of these subspace configurations in Section \ref{sec:construction-summary}.

From now on we use notation of Setup \ref{main_setup} and assume additionally that $X$ is $d$-minimal.

\subsection{Geometric considerations}

Given an abelian translate $A \hookrightarrow W_dX$, the tensor product map on line bundles gives a map
\[\underbrace{A \times A \times \cdots \times A}_{n} \to W_{nd}X, \]
whose image $A^{(n)}$ is (noncanonically) isomorphic to $A$ (since we assumed $A$ is a trivial torsor).  Every divisor in $A^{(n)}$ is (geometrically) of the form $nD$ for some $D \in A$.  By Theorem \ref{2D-is-birational} the linear system $|nD|$, for $n \geqslant 2$, is birationally very ample.  By upper-semicontinuity of dimensions of global sections, there is an open subset of $D$ in $A$ with the same (minimal) value of $\dim |nD|$; we denote this minimal value by $r(n)$ (so in fact $A^{(n)} \subset W^{r(n)}_{nd}X$.)

Given any divisor $D'$ on $X$, there is an evaluation map
\[ H^0(X, nD) \xrightarrow{\ev_{D'}} \O(nD)_{D'}, \]
whose kernel is identified with the space of sections vanishing along $D'$.  If we let $D'$ vary among the divisors parameterized by $A$, the dimension of kernel is an upper-semicontinuous and achieves a generic value on an open subset of $A$.  We write $s(n)$ for $r(n)$ minus this generic dimension of $h^0(X, nD - D')$ as $D'$ varies over the divisors parameterized by $A$.

The number $s(n)$ has a geometric interpretation in terms of the map to projective space given by $|nD|$.  We will write $\Span_{|nD|}(D')$ for the linear span of the images of the points of $D'$ under the map $|nD|$.  Then $\Span_{|nD|}(D')$ is a projective space of dimension at most $s(n)$.  For $D' \in A$ general, $\Span_{|nD|}(D')$ has dimension exactly $s(n)$.  (When the linear system $nD$ is unambiguous, we will implicitly write $\Span(D')$.) The abelian translate property from Section \ref{sec:abelian-translate} in this geometric language says that for any collection of $n-1$ divisors $D_1, \dots, D_{n-1}$ from $A$, there exists a divisor $D_n$ such that their spans $\Span_{|nD|}(D_1), \dots, \Span_{|nD|}(D_{n})$ in $|nD|$ are contained in a common hyperplane.

\begin{lemma}\label{lem:spans_meets_divisor}  Let \(X\) be \(d\)-minimal.
Suppose that \(D_1\) and \(D_2\) are general divisors from \(A\) and that \(D\) is an independently general divisor from \(A\).  
\begin{itemize}
\item If \(n \geqslant 3\), then \(X \cap \Span_{|nD|}D_1 = D_1\).
\item If \(n = 2\), then \(X \cap \Span_{|2D|} D_1  = D_1 \sqcup (2D - D_1)\).
\end{itemize}
In particular, \(D_2 \cap \Span_{|nD|} D_1 = D_1 \cap D_2\).
\end{lemma}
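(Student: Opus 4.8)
The plan is to reduce the whole statement to a computation of the base loci of the linear systems $|nD - D_1|$. Since $X$ is $d$-minimal and $n \geqslant 2$, the system $|nD|$ is base-point free (for $n=2$ this is Proposition \ref{prop:elliptic-exceptions}; the general case is produced exactly as in the third paragraph below), so it defines an honest morphism $\phi \colon X \to \PP^{r(n)}$ and $X \cap \Span_{nD}D_1$ means $\{p \in X : \phi(p) \in \Span_{nD}D_1\}$. The sections of $\O(nD)$ vanishing along the (reduced, general) divisor $D_1$ are precisely $s_{D_1}\cdot H^0(X, \O(nD-D_1))$, where $s_{D_1}$ is the canonical section with divisor $D_1$; these are the linear forms cutting out $\Span_{nD}D_1$. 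Hence $\phi(p) \in \Span_{nD}D_1$ exactly when $s_{D_1}(p)=0$ or every section of $\O(nD-D_1)$ vanishes at $p$, which gives the key reformulation
\[ X \cap \Span_{nD}D_1 \;=\; \supp D_1 \,\cup\, \operatorname{Bs}|nD - D_1|. \]

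First I would treat $n=2$. For $D,D_1 \in A$ independently general, the class $2D-D_1$ is again a general divisor of $A$ (the map $(D,D_1)\mapsto 2D-D_1$ dominates $A$), so by the defining property of Setup \ref{main_setup} it satisfies $h^0(X,\O(2D-D_1))=1$. Thus $|2D-D_1|$ consists of the single effective divisor $2D-D_1$, whose base locus is its own support, and the reformulation yields $X\cap\Span_{2D}D_1 = \supp D_1 \cup \supp(2D-D_1)$. The two supports are disjoint by \eqref{eq:disjoint_support}, giving $D_1 \sqcup (2D-D_1)$.

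For $n \geqslant 3$ I claim $|nD - D_1|$ is base-point free, which immediately gives $X\cap \Span_{nD}D_1 = \supp D_1 = D_1$. The abelian translate property produces, for any $G_1, \dots, G_{n-2} \in A$, a divisor $G_{n-1} \in A$ with $G_1 + \cdots + G_{n-1} \sim nD - D_1$; these form a sub-family of $|nD-D_1|$, so it suffices to show that for every $p \in X$ some member avoids $p$. The crucial input is that no point lies on every divisor of $A$: since a general pair of divisors in $A$ has disjoint support by \eqref{eq:disjoint_support}, the set $\{G \in A : p \in \supp G\}$ is a \emph{proper} closed subset of the irreducible variety $A$ for \emph{every} $p$. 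The main obstacle is that $G_{n-1}$ is not free but forced by $G_1,\dots,G_{n-2}$, so one cannot simply choose all $G_i$ generically. I would resolve this by fixing $G_1, \dots, G_{n-3}$ generically avoiding $p$ (possible as $n-3 \geqslant 0$) and then letting $G \colonequals G_{n-2}$ vary over $A$, so that $G_{n-1} = N - G$ with $N \colonequals nD - D_1 - G_1 - \cdots - G_{n-3}$. The assignment $G \mapsto N - G$ is an automorphism of the variety $A$, so both $\{G : p \in \supp G\}$ and $\{G : p \in \supp(N-G)\}$ are proper closed subsets; a general $G$ avoids both, producing a divisor in $|nD-D_1|$ missing $p$, whence $\operatorname{Bs}|nD-D_1| = \emptyset$.

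Finally, the ``in particular'' clause follows by intersecting with $D_2 \subset X$: for $n \geqslant 3$ one gets $D_2 \cap \Span_{nD}D_1 = D_2 \cap (X \cap \Span_{nD}D_1) = D_1 \cap D_2$, while for $n=2$ one additionally uses that $D_2$ and $2D-D_1$ are general members of $A$, hence have disjoint support by \eqref{eq:disjoint_support}, so the $(2D-D_1)$-part contributes nothing. I expect the third paragraph—uniform base-point freeness of $|nD-D_1|$ over all $p$, and in particular controlling the forced divisor $G_{n-1}$ via the involution automorphism—to be the only genuine obstacle; everything else is formal once the reformulation via base loci is established.
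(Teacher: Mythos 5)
Your proposal is correct, and its core observation is the same as the paper's: the paper's proof of the \(n \geqslant 3\) case is precisely your reformulation (a point of \(X\) lying in \(\Span_{nD} D_1\) but not on \(D_1\) is a base point of \(|nD - D_1|\)) combined with base-point freeness of \(|nD - D_1|\). The two places where you diverge are in how the inputs are supplied, and both are defensible. For base-point freeness the paper simply cites Proposition \ref{prop:elliptic-exceptions}; but since \(nD - D_1\) lies in \(A^{(n-1)}\), hence is geometrically of the form \((n-1)E\) with \(E \in A\), that citation is literal only when \(n = 3\), and for larger \(n\) one must rerun the argument hidden in that proposition's proof (the family \(E + |2D - E|\)). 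Your third paragraph does exactly this, uniformly in \(n\): writing \(nD - D_1 \sim G_1 + \cdots + G_{n-1}\) with \(G_i \in A\), fixing all but one \(G_i\) generically, and controlling the forced last divisor via the involution \(G \mapsto N - G\) of \(A\); this fills in a genuine looseness in the paper's one-line citation. For \(n = 2\) the paper argues differently: \(\Span_{2D} D_1\) is a hyperplane (otherwise projection from a codimension-\(2\) subspace containing it gives a degree \(\leqslant d\) map \(X \to \pp^1\), contradicting Setup \ref{main_setup}), and the hyperplane section is then \(D_1 + (2D - D_1)\). You instead use that \(2D - D_1\) is a general point of \(A\), hence lies in \(U\) with \(h^0(X, \O(2D - D_1)) = 1\), so \(\operatorname{Bs}|2D - D_1| = \supp(2D - D_1)\); this is a clean alternative that trades the projection/gonality argument for the defining property of \(U\). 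Finally, note that both your proof and the paper's treat the ``in particular'' clause for \(n=2\) by using that \(2D - D_1\) avoids \(\supp D_2\); strictly speaking \eqref{eq:disjoint_support} concerns general pairs, but this is harmless here exactly because \(D\) is general independently of \((D_1, D_2)\), so \(2D - D_1\) varies generally while \(\supp D_2\) is a fixed finite set.
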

\begin{proof}
First suppose that $n \geqslant 3$. Since \(D\) is general, the line bundle \(nD - D_1\) is basepoint free by Proposition \ref{prop:elliptic-exceptions}.  On the other hand, any point of  \((\Span_{|nD|} D_1)\cap D_2\) that is not a point of \(D_1\) would be a basepoint of \(nD - D_1\).

Now suppose $n=2$. Since $D_1, D$ are a general pair, the space $\Span_{|2D|} D_1$ is a hyperplane, for otherwise the projection from  a codimension $2$ space containing $\Span_{|2D|} D_1$ is a degree $d$ (or less) map from $X$ to $\PP^1$. Since $D_1+(2D-D_1)=2D$, the hyperplane section $X \cap \Span_{|2D|} D_1 $ equals $D_1 \sqcup (2D-D_1).$ 
\end{proof}

By definition, $r(n)-r(n-1)=s(n) + 1$. The difference $s(n)-s(n-1)$ also has a geometric interpretation.

\begin{lemma}\label{lem:s_n_minus_one}
We have \(s(n) - s(n-1) =  \lambda +1\), where \(\lambda= \dim \left(\Span_{|nD|}(D_1) \cap \Span_{|nD|}(D_2)\right)\), for general \(D_1, D_2 \in A\).
\end{lemma}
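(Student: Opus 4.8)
The plan is to compute $\dim \Span_{nD}(D_1 \cup D_2)$ in two ways and compare. Since the span of a union of point sets is the join of the individual spans, and $\dim \Span_{nD}(D_i) = s(n)$ for the general divisors $D_1, D_2$, the projective dimension formula $\dim(P \vee Q) = \dim P + \dim Q - \dim (P \cap Q)$ gives
\[
\dim \Span_{nD}(D_1 \cup D_2) = 2s(n) - \lambda.
\]
On the other hand, applying the standard span-dimension identity $\dim \Span_{nD}(E) = r(n) - h^0(X, nD - E)$ (the same computation underlying the definition of $s(n)$) to the reduced degree $2d$ divisor $E = D_1 + D_2$, whose two halves have disjoint support by \eqref{eq:disjoint_support}, gives
\[
\dim \Span_{nD}(D_1 \cup D_2) = r(n) - h^0(X, nD - D_1 - D_2).
\]

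The crux is therefore to evaluate $h^0(X, nD - D_1 - D_2)$ for general $D, D_1, D_2 \in A$. First I would use the abelian translate property to absorb $D_1$ into a single ``diagonal'' class: since $A^0$ is an abelian variety, multiplication by $n-1$ is surjective on $A^0$, so the degree $(n-1)d$ class $nD - D_1$ is linearly equivalent to $(n-1)\tilde D$ for some $\tilde D \in A$, and as $D_1$ ranges over $A$ (with $D$ fixed general) the point $\tilde D$ sweeps out a dense subset of $A$. Hence I may take $\tilde D$ general and independent of $D_2$, so that
\[
h^0(X, nD - D_1 - D_2) = h^0\bigl(X, (n-1)\tilde D - D_2\bigr) = r(n-1) - s(n-1),
\]
where the final equality is exactly the definition of $s(n-1)$ (the generic value of $h^0\bigl((n-1)D - D'\bigr)$ as $D'$ ranges over $A$). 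I expect the main obstacle to be the genericity bookkeeping in this step: one must ensure that the generality of $D, D_1, D_2$ propagates so that $\tilde D$ and $D_2$ are simultaneously general and independent, and that all the semicontinuous quantities involved attain their generic values on a common dense open subset of $A$. This is the same flavor of argument that justifies the already-recorded relation $r(n) - r(n-1) = s(n) + 1$.

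Finally I would combine the two computations. Substituting and using $r(n) - r(n-1) = s(n) + 1$ yields
\[
2s(n) - \lambda = r(n) - \bigl(r(n-1) - s(n-1)\bigr) = \bigl(r(n) - r(n-1)\bigr) + s(n-1) = s(n) + s(n-1) + 1,
\]
and solving gives $\lambda = s(n) - s(n-1) - 1$, i.e. $s(n) - s(n-1) = \lambda + 1$, as claimed; the convention $\dim \emptyset = -1$ makes this uniform when the two spans are disjoint. For the boundary case $n = 2$ one checks directly that $s(1) = 0$ and $h^0(X, 2D - D_1 - D_2) = 0$ for general divisors, so the same arithmetic goes through.
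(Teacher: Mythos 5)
Your proof is correct, but it takes a genuinely different route from the paper's. The paper's argument is a one-step projection: since $D_1 \cap D_2 = \emptyset$ by \eqref{eq:disjoint_support}, Lemma \ref{lem:spans_meets_divisor} identifies the projection of $\Span_{nD}D_2$ from the center $\Span_{nD}D_1$ with $\Span_{nD-D_1}D_2$, which has dimension $s(n-1)$ because $nD-D_1$ is a general point of $A^{(n-1)}$; the dimension formula for projections, $\dim \pi(\Span_{nD}D_2) = s(n)-\lambda-1$, then gives the claim. You instead double-count $\dim \Span_{nD}(D_1 \cup D_2)$: the Grassmann formula gives $2s(n)-\lambda$, while the span--$h^0$ duality gives $r(n)-h^0(nD-D_1-D_2)$, and the abelian translate property (surjectivity of multiplication by $n-1$ on $A^0$) evaluates this $h^0$ as $r(n-1)-s(n-1)$. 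The two computations are cohomologically dual --- projecting from $\Span_{nD}D_1$ is the geometric counterpart of passing to the subsystem $H^0(nD-D_1)$ --- but your version has the mild advantage of not invoking Lemma \ref{lem:spans_meets_divisor} at all, and hence none of the basepoint-freeness and $d$-minimality input behind it; you only need \eqref{eq:disjoint_support} to know $D_1+D_2$ is reduced, so that its divisor span equals the span of the underlying point set. The genericity bookkeeping you flag is not a gap: the map $(D,D_1,D_2)\mapsto (n[D]-[D_1],[D_2])$ is a surjective morphism $A^3 \to A^{(n-1)}\times A$, so the preimage of the dense open where $h^0$ attains its generic value $r(n-1)-s(n-1)$ is dense open in $A^3$; this is the same bookkeeping the paper silently absorbs into the identity $r(n)-r(n-1)=s(n)+1$ and into the phrase ``Since $D, D_1, D_2$ are general.'' Your $n=2$ boundary check (with $s(1)=0$) is likewise consistent with the paper's conventions.
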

\begin{proof}
Since \(D_1\) and \(D_2\) are general, we have \(D_1 \cap D_2 = \emptyset\) by \eqref{eq:disjoint_support}, and so Lemma \ref{lem:spans_meets_divisor} guarantees that the projection of $\Span_{|nD|} D_2$ from $\Span_{|nD|} D_1$ is $\Span_{|nD-D_1|} D_2$. Since $D, D_1, D_2$ are general, we have $s(n)=\dim\Span_{|nD|} D_2$ and $s(n-1)=\dim\Span_{|nD-D_1|} D_2$. 
\end{proof}

We want to analyze the geometry of the configuration of $\Span D'$ in $|nD|$ for various $n\geqslant 2$. It will be convenient to project from a maximal subspace that is common to $\Span D'$ for almost all $D'$; to formalize this we make the following definition. 

\begin{definition}[Definition/Notation]
For a dense open subset $W \subset A$, let 
\[V_{|nD|, W}\colonequals \bigcap_{D' \in W} \Span_{|nD|}D'.\] 
Let $V_{|nD|}$ be the maximal subspace of the form $V_{|nD|,W}$ as $W$ varies over dense opens in $A$.  Explicitly, \(V_{|nD|} = V_{|nD|, W}\) for \(W\) the locus of \(D'\) where \(\Span D'\) has the maximal dimension \(s(n)\).
\end{definition}

\begin{lemma}\label{elliptic-case}
Suppose $X$ is a $d$-minimal curve and $D \in A(k)$ is a general divisor. Then the codimension of $V = V_{|2D|}$ in $|2D|$ is at least $3$.
\end{lemma}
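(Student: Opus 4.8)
The plan is to bound the dimension of $V = V_{|2D|}$ from above by showing that if $V$ had codimension $\leqslant 2$ in $|2D|$, we could construct a low-degree map from $X$, contradicting $d$-minimality. Recall that $V_{|2D|}$ is contained in $\Span_{2D}(D')$ for $D'$ in a dense open of $A$, and by Lemma~\ref{lem:spans_meets_divisor} the span $\Span_{2D}(D_1)$ is a hyperplane meeting $X$ exactly in $D_1 \sqcup (2D - D_1)$. So $V$ is a fixed linear subspace contained in all these hyperplanes, and projection from $V$ will be the key tool.

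First I would set $c \colonequals \codim_{|2D|} V$ and consider the projection $\pi_V \colon \PP^{|2D|} \dashrightarrow \PP^{c-1}$ away from $V$. Composing with the birational map $X \to \PP^{|2D|}$ (birationally very ample by Theorem~\ref{2D-is-birational}, since $X$ is $d$-minimal) gives a rational map $\psi \colon X \dashrightarrow \PP^{c-1}$. Since every span $\Span_{2D}(D')$ contains $V$, the image $\psi(D')$ of each such divisor spans a subspace of dimension at most $s(2) - (\dim V + 1) = s(2) - \dim V - 1$; using $s(2) \leqslant \dim|2D| - \codim V + \text{(small correction)}$, the point is that the images $\psi(D')$ become degenerate, forcing $\psi$ to have small degree onto its image. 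The cases $c = 1$ and $c = 2$ must be ruled out separately.

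The case $c = 1$ would mean $V$ is a hyperplane, hence $V = \Span_{2D}(D')$ for a dense family of $D'$; but distinct general $D_1, D_2$ have distinct hyperplane spans meeting $X$ in $D_1 \sqcup (2D - D_1)$ versus $D_2 \sqcup (2D - D_2)$, which are different divisors, giving a contradiction directly. For $c = 2$, projection from $V$ lands in $\PP^1$, and I would argue that each $\Span_{2D}(D')$ maps to a \emph{point} of this $\PP^1$ (since $V$ has codimension $2$ and sits in the hyperplane $\Span_{2D}(D')$, the image of that hyperplane under $\pi_V$ is a single point). Thus every $D'$ is contracted to a point by the induced map $X \to \PP^1$, and moreover $D'$ and $2D - D'$ (which share the hyperplane $\Span_{2D}(D')$) map to the same point. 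This is precisely the configuration in the hypothesis of Lemma~\ref{lem:ramification_argument} once one checks the degree of $X \to \PP^1$ is $2d$; that lemma then produces a degree $d$ map $X \to A$ to the elliptic curve $A$, contradicting that $X$ is $d$-minimal (such a cover would witness non-minimality). To invoke Lemma~\ref{lem:ramification_argument} one needs $\dim A = 1$; if $\dim A > 1$, then $s(2) \geqslant \dim A \geqslant 2$ and the projection-degeneracy argument has more room, so the codimension bound is easier and handled by Proposition~\ref{prop:elliptic-exceptions} directly.

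The main obstacle I anticipate is the $c = 2$ case: correctly verifying that the composed map $X \to \PP^1$ has degree exactly $2d$ (so that Lemma~\ref{lem:ramification_argument} applies cleanly), and confirming that the pairing $\psi(D') = \psi(2D - D')$ holds for enough divisors to match the lemma's hypotheses. The subtlety is that $\pi_V$ is only a rational map and one must control basepoints and ensure that the contraction of each $\Span_{2D}(D')$ to a point is genuine rather than an artifact of indeterminacy; here I would lean on Lemma~\ref{lem:spans_meets_divisor}, which pins down $X \cap \Span_{2D}(D') = D' \sqcup (2D - D')$ exactly, to compute the fiber degree and establish the involution structure $E \mapsto 2D - E$ needed to trigger Lemma~\ref{lem:ramification_argument}.
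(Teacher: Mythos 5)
Your main line of attack is essentially the paper's own proof: each $\Span_{2D}(D')$ is a hyperplane containing $V$, so a codimension-$2$ subspace $V$ would make the projection $\pi_V \colon X \to \PP^1$ contract every general divisor $D' \in A$ to a point, with $\pi_V(D') = \pi_V(2D - D')$; the fiber computation via Lemma~\ref{lem:spans_meets_divisor} together with \eqref{eq:disjoint_support} gives $\deg \pi_V = 2d$, and Lemma~\ref{lem:ramification_argument} then produces a degree $d$ map $X \to A$, contradicting $d$-minimality. Your disposal of the case $c=1$ is also fine (the paper treats that case as immediate, since distinct general divisors give distinct hyperplane spans).

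The one genuine flaw is your handling of $\dim A > 1$. You claim this case is ``handled by Proposition~\ref{prop:elliptic-exceptions} directly,'' but that proposition only asserts that $|2D|$ is basepoint free with $\dim |2D| \geqslant \max(2, \dim A)$; it says nothing about $V_{|2D|}$, and a lower bound on $\dim |2D|$ by itself cannot bound the codimension of a common linear subspace of the divisor spans. The fix is already contained in the machinery you set up: in the case $c = 2$, every general divisor from $A$ is contracted by $\pi_V \colon X \to \PP^1$, and each fiber of $\pi_V$ is a finite set of at most $2d$ points, hence supports only finitely many effective degree $d$ divisors. The assignment $D' \mapsto \pi_V(D')$ is therefore finite-to-one on a dense open of $A$, which forces $\dim A = 1$. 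This is exactly how the paper proceeds: $\dim A = 1$ is derived as a \emph{consequence} of the contraction, not split off as a separate hypothesis, and once it is in hand Lemma~\ref{lem:ramification_argument} applies and the contradiction with $d$-minimality goes through in all cases.
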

\begin{proof}
Suppose that to the contrary the codimension of $V$ is equal to $2$. The projection from $V$ defines a morphism $\pi_V: X \to \PP^1$ of degree at most $2d$. Since for a general divisor $D'$, $\Span_{|2D|}D'$ is contained in a hyperplane and contains $V$, a general divisor $D' \in A$ is contracted to a point by $\pi_V$. In particular the divisors from $A$ vary in a one-dimensional family, and so $\dim A = 1$. Moreover, since $D'$ and $2D-D'$ belong to the same hyperplane, $\pi_V(D')=\pi_V(2D-D')$. For general $D'\in A$, the divisors $D'$ and $2D-D'$ don't share points by \eqref{eq:disjoint_support}, and so the degree of $\pi_V$ equals $2d$.  By Lemma \ref{lem:ramification_argument} this implies that \(X\) is a degree \(d\) cover of the elliptic curve \(A\), which contradicts our assumption of \(d\)-minimality. 
\end{proof}

In Lemma \ref{two-divisors-through-a-point} we showed that when \(X\) is \(d\)-minimal, a general point $P$ on $X$ is contained in at least two distinct divisors from $A$. It will be convenient for us to consider separately the case when a general point $P$ in $X$ is the intersection of exactly two divisors from $A$. We refer to this property as condition \eqref{dagger}, formalized as follows:
\begin{equation}\tag{$\dagger$}\label{dagger}
\text{For a general point }P \in X\text{ there exists  a pair of divisors } F, F' \in A \text{ such that } F \cap F'=P
\end{equation}

We do not know of examples in which condition \eqref{dagger} fails for a $d$-minimal curve $X$. We have the following sufficient condition for \eqref{dagger}:

\begin{lemma}\label{lemma:two_divisors_one_point}
Suppose $X$ is $d$-minimal and $r(2)=2$. Suppose $D\in A$ is general, $x \in D$ is a point, and $D'$ is a general divisor containing $x$. Then $D \cap D' = \{x\}$. In particular, condition \eqref{dagger} holds.
\end{lemma}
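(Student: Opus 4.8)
The plan is to use the hypothesis $r(2)=2$ to realize $X$ birationally as a plane curve via the complete linear system $|2D|$, and then to read off the intersection $D\cap D'$ purely from the fact that two distinct lines in $\PP^2$ meet in a single point. So the whole statement should reduce to an elementary incidence computation in the plane, once the correct genericity is in place.

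First I would set up the plane picture. Since $X$ is $d$-minimal and $r(2)=\dim|2D|=2$, case \eqref{non-birational} of Theorem \ref{2D-is-birational} is excluded, so case \eqref{birational} holds: the morphism $\iota\colon X\to\PP^2$ attached to $|2D|$ is birational onto its image $Y$, a plane curve of degree $\deg(2D)=2d$. Next I would compute $s(2)=1$. Indeed $r(1)=0$, because a general $D\in A\subset U$ is the unique effective divisor in its class; combined with the identity $r(2)-r(1)=s(2)+1$ this gives $s(2)=1$. Hence for any $D'$ the span $\Span_{2D}(D')$ has dimension at most $1$, and for $D'$ whose image has at least two distinct points it is exactly a line. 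The divisor $D$ also spans a line: since $\iota^*$ identifies $H^0(\PP^2,\O(1))$ with $H^0(X,2D)$, there is a unique line $\ell_D\subset\PP^2$ with $\iota^*\ell_D=2D$, and as $2D$ is supported on $\supp D$ this line contains the $\geq 2$ distinct points of $\iota(D)$.

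The heart of the argument is then the following. Writing $\ell_D\colonequals\Span_{2D}(D)$ and $\ell_{D'}\colonequals\Span_{2D}(D')$, both lines pass through $\iota(x)$ because $x\in D$ and $x\in D'$. The step I expect to be the main (if mild) obstacle is proving $\ell_D\neq\ell_{D'}$: if they coincided, then $\iota(D')\subseteq\ell_D$, so the reduced divisor $D'$ would be dominated by the hyperplane section $\iota^*\ell_D=2D$, forcing $\supp D'\subseteq\supp D$; but a general divisor through $x$ has its remaining $d-1$ points off $D$, a contradiction. Granting distinctness, the two lines meet in the single point $\iota(x)$, so any $y\in D\cap D'$ satisfies $\iota(y)\in\ell_D\cap\ell_{D'}=\{\iota(x)\}$; since $\iota$ is birational and $\iota(x)$ is a general point of $Y$, this forces $y=x$, whence $D\cap D'=\{x\}$.

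Finally, for condition \eqref{dagger} I would invoke that the first projection of the incidence correspondence $I\to X$ is dominant, so a general point $P\in X$ occurs as a point $x$ of a general divisor $D$; taking $F=D$ and $F'=D'$ as above produces a pair of divisors from $A$ with $F\cap F'=\{P\}$, which is exactly \eqref{dagger}. The only points requiring care are the genericity hypotheses used for $D'$ (that $\iota(D')$ spans a line and that $\supp D'\not\subseteq\supp D$), which hold for $D'$ general among the divisors of $A$ through $x$; when $\dim A=1$ this family is finite, but Lemma \ref{two-divisors-through-a-point} still furnishes some $D'\neq D$ through $x$, and a support/reducedness comparison rules out $\supp D'\subseteq\supp D$, so the argument goes through unchanged.
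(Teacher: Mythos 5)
Your argument is, in outline, the same as the paper's: realize $X$ in $\PP^2$ by a degree-$2d$ linear system coming from $A^{(2)}$, observe that divisor spans are lines, and conclude from the fact that two distinct lines meet in a single point. Most supporting steps are correct: $s(2)=1$ (so spans of divisors from $A$ are at most lines, and in fact exactly lines, since $h^0(2D-D')\geqslant 2$ would produce a $k$-rational $g^1_d$, contradicting $\gon_k X>d$ in Setup \ref{main_setup}); the unique line $\ell_D$ with $\iota^*\ell_D=2D$; and $\ell_D\neq\ell_{D'}$ via the support/reducedness comparison. The gap is the final step: ``since $\iota$ is birational and $\iota(x)$ is a general point of $Y$, this forces $y=x$.'' That generality claim is not available, because the point $x$ and the map $\iota=\iota_{|2D|}$ are correlated: $x\in D$, so the pair $(x,[2D])$ lies in the image of the incidence correspondence $I$ inside $X\times A^{(2)}$, which is a \emph{proper closed} subset. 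General position only shows that the locus of pairs $(x',L)\in X\times A^{(2)}$ for which $x'$ maps to a singular point of $\iota_{|L|}(X)$ is a proper closed subset; nothing in your argument prevents that bad locus from containing the entire incidence locus, which is exactly where your pair $(x,[2D])$ is confined. Concretely, what you must exclude is that $\iota_{|2D|}$ identifies two points $x,y\in D$, and this is numerically consistent with everything you establish: $\iota(x)=\iota(y)$ would be a point of $Y$ with two branches, with $\ell_D$ tangent to both (since $\iota^*\ell_D=2D$ has multiplicity $2$ at $x$ and at $y$), contributing $4$ to $\ell_D\cdot Y$; with the points of $D$ identified in pairs one gets $\ell_D\cdot Y=4(d/2)=2d$, exactly the degree, so Bezout gives no contradiction. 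If such an identification occurred, your argument would only yield $D\cap D'\subseteq\{x,y\}$.

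This is precisely the difficulty the paper's proof is engineered to avoid, and the repair is one extra idea: choose an auxiliary general divisor $E\in A$ disjoint from $D$ and $D'$, and run your two-line argument in $|D+E|$ rather than in $|2D|$. Since $E$ is chosen after $x$, $D$ and $D'$, the pair $(x,D+E)$ genuinely is a general point of $X\times A^{(2)}$, so $x$ maps to a smooth point of the image; moreover $\Span_{D+E}(D)$ now meets the image in the $2d$ distinct points of $D\sqcup E$ instead of being doubly tangent along $D$, so the injectivity you need, $\iota^{-1}(\iota(x))=\{x\}$, really does follow from generality. With that substitution the rest of your write-up (including the treatment of \eqref{dagger} and of the finitely many divisors through $x$ when $\dim A=1$) goes through; without it, the key injectivity claim is unsupported.
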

\begin{proof}
Choose a general divisor $E$ disjoint from $D$ and $D'$. The pair \((x, D + E)\) is a general point of \(X \times A^{(2)}\).  In particular, \(x\) is a smooth point on the image of \(X \subset |D+E|\simeq \pp^2\).  The span of $D$ in $|D+E|$ is a line $\ell$ that intersects the curve in $D \cup E$. The span of $D'$ is a line $\ell'$ that does not equal $\ell$ since $D' \not\subset D \cup E$. A pair of distinct lines shares exactly one point, and so $D \cap D' \subset \ell \cap \ell' = \{x\}.$
\end{proof}
Under condition \eqref{dagger} the linear configuration of $\Span D'$ in $|nD|$ has interesting incidence structure, as we show in Proposition \ref{extra-intersections}. We first need to prove the following linear nondegeneracy property of $D \subset \Span D$. 
\begin{lemma}\label{transitive-monodromy}
Suppose $n \geqslant 2$ is an integer such that $s(n) \leqslant d-2$. Let $D \in A$ be a general divisor. Then for a general divisor $D' \in A$ and any point $x \in D'$ we have $\Span_{|nD|} (D'\setminus\{x\})=\Span_{|nD|} D'$. 
\end{lemma}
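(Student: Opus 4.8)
The plan is to exploit the transitivity of the monodromy of the degree \(d\) cover \(I \to U\) (equivalently, the irreducibility of \(I\)) to reduce the statement to a single numerical dichotomy, which the hypothesis \(s(n) \leqslant d-2\) then resolves.

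First I would work on the incidence correspondence \(I = \{(P, [D']) \in X \times U : P \in D'\}\) and consider the constructible function \(F(P, [D']) \colonequals \dim \Span_{nD}(D' \setminus \{P\})\), the dimension of the span of the images of the remaining \(d-1\) points of \(D'\) under the fixed map \(X \to \PP^{r(n)}\) associated to \(|nD|\). Since the monodromy of \(I \to U\) is transitive, \(I\) is irreducible, and so \(F\) is equal to a single value \(\sigma\) on a dense open \(I^{\circ} \subset I\). The complement \(I \setminus I^{\circ}\) is a proper closed subset, hence has dimension strictly less than \(\dim I = \dim U\); therefore its image under the projection \(I \to U\) is contained in a proper closed subset of \(U\), and for a general \([D'] \in U\) the \emph{entire} fiber (all \(d\) points of \(D'\)) lies in \(I^{\circ}\). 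Thus for general \(D'\) we have \(F(P, [D']) = \sigma\) for every \(P \in D'\).

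Next I would pin down \(\sigma\). Removing a single point from a spanning set decreases the dimension of the span by \(0\) or \(1\); since \(\dim \Span_{nD}(D') = s(n)\) for general \(D'\) by definition of \(s(n)\), this gives \(\sigma \in \{s(n)-1,\ s(n)\}\). If \(\sigma = s(n)\), then \(\Span_{nD}(D' \setminus \{x\}) = \Span_{nD}(D')\) for every \(x \in D'\), which is exactly the claim, so it remains only to rule out \(\sigma = s(n)-1\). Suppose \(\sigma = s(n)-1\). Then for general \(D'\) (whose \(d\) image points are distinct by the birational very ampleness of \(|nD|\), Theorem \ref{2D-is-birational}) and every \(x \in D'\), removing \(x\) strictly drops the span, i.e.\ \(x \notin \Span_{nD}(D' \setminus \{x\})\). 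In terms of homogeneous coordinate vectors \(v_1, \dots, v_d\) of the \(d\) image points, no \(v_i\) lies in the linear span of the others, so there is no nontrivial linear relation among them; hence \(v_1, \dots, v_d\) are linearly independent and \(s(n) = \dim \Span_{nD}(D') = d-1\). This contradicts \(s(n) \leqslant d-2\), completing the argument.

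The step I expect to be the main obstacle is the spreading-out in the first paragraph: one must ensure that the generic value \(\sigma\) of \(F\) on the irreducible variety \(I\) is attained simultaneously at \emph{all} \(d\) points of a general divisor, and not merely at one of them. The key point is purely dimension-theoretic, namely that the locus of bad points is a proper closed subset of \(I\) and so projects into a proper closed subset of the base \(U\) (a finite set when \(\dim A = 1\)). Once this is in place, the linear-algebra dichotomy and its resolution via \(s(n) \leqslant d-2\) are routine.
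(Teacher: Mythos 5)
Your proof is correct and is essentially the paper's argument: the paper likewise propagates the ``span drops after removing a point'' property to every point of a general divisor $D'$ using transitivity of monodromy --- implemented there via the Galois action on a $k$-rational degree $d$ divisor $D' \in U(k)$ rather than via irreducibility of the incidence correspondence $I$ and your dimension count, which are two formulations of the same fact --- and then derives a contradiction from $s(n) \leqslant d-2$ by elementary linear algebra. Your endgame (no point lies in the span of the others, forcing projective independence and $s(n) = d-1$) and the paper's (a hyperplane $H_x$ containing $s(n)+1$ spanning points of $D'$ chosen to avoid $x$, hence containing all of $\Span_{nD} D'$) are the same linear algebra in different packaging.
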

\begin{proof}
Suppose that for a general divisor $D'$ there is a point $x \in D'$ such that the set $D' \setminus \{x\}$ is contained in a hyperplane inside $\Span D'$. Choose a divisor $D' \in U(k)$ such that the Galois group $G_k$ acts transitively on $D'$ and the complement of a point $x \in D'$ belongs to a hyperplane $H \subset \Span D'$. By transitivity of the Galois action, for every $x \in D'$ there exists a hyperplane $H_x$ that contains $D' \setminus \{x\}$. Choose points $x_1, ..., x_{s(n)+1} \in D'$ that span $\Span D'$.  Since $s(n)+1 \leqslant d-1$ there is a point $x \in D'$ such that $x \neq x_i$. Then $H_x$ would contain the points $x_1, ..., x_{s(n)+1}$, so $H_x$ contains their span $\Span D'$. This is a contradiction.  
\end{proof}

\begin{proposition}\label{extra-intersections}
Suppose $X$ is a $d$-minimal curve, condition \eqref{dagger} holds, and $n \geqslant 2$ is an integer. 
Suppose that for a general $D' \in A$,  we have $\dim \Span_{|nD|}D' \leqslant d-2$. Then for a general pair of divisors $D_1, D_2 \in A$ we have $\Span_{|nD|} D_1 \cap \Span_{|nD|} D_2 \neq V_{|nD|}.$
\end{proposition}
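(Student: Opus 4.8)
The plan is to show that the generic pairwise intersection strictly contains $V_{|nD|}$. Since $V_{|nD|}\subseteq \Span_{nD}D_1\cap\Span_{nD}D_2$ for any $D_1,D_2$ in the locus where the span has maximal dimension, it suffices to exhibit a point of the intersection lying off $V_{|nD|}$; equivalently, writing $\lambda=\dim(\Span_{nD}D_1\cap\Span_{nD}D_2)=s(n)-s(n-1)-1$ for the generic value (Lemma \ref{lem:s_n_minus_one}), I must show $\lambda>\dim V_{|nD|}$. Phrased yet another way, the statement asks that the common base locus of the family $\{\Span_{nD}D'\}$ not already be reached by two general members. The first input I would record is that the image of $X$ avoids $V_{|nD|}$: by Lemma \ref{lem:spans_meets_divisor} a general $\Span_{nD}D'$ meets $X$ only in $D'$ (resp.\ in $D'\sqcup(2D-D')$ when $n=2$), and since $V_{|nD|}$ lies in every such span while general divisors have disjoint support by \eqref{eq:disjoint_support}, the set $V_{|nD|}\cap X$ is empty.

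The case $n=2$ is then immediate and does not use \eqref{dagger}: by the proof of Lemma \ref{lem:spans_meets_divisor} each $\Span_{2D}D'$ is a hyperplane, so two distinct such hyperplanes meet in codimension $2$, whereas $V_{|2D|}$ has codimension at least $3$ by Lemma \ref{elliptic-case}; hence the pairwise intersection strictly contains $V_{|2D|}$. (The spans are distinct for a general pair, as otherwise $V_{|2D|}$ would be a hyperplane.)

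For $n\geq 3$ the spans are small, with $\Span_{nD}D'\cap X=D'$, so the extra intersection point is forced off the curve, and this is where condition \eqref{dagger} enters. For a general point $P$, choosing $F,F'\in A$ with $F\cap F'=\{P\}$, the image of $P$ lies in $\Span_{nD}F\cap\Span_{nD}F'$ but off $V_{|nD|}$, so the conclusion already holds for the special pair $(F,F')$. The heart of the matter is to upgrade this to a general pair, which I would attempt by contradiction: assume that for a general pair the intersection equals $V_{|nD|}$ and project $X$ from $V_{|nD|}$. This yields a family of pairwise disjoint linear spaces $L_{D'}:=\pi_{V}(\Span_{nD}D')$ over $A$ with empty common intersection, into which the projected curve maps; condition \eqref{dagger} and Lemma \ref{two-divisors-through-a-point} place the image of every general point in at least two distinct members, while the nondegeneracy of $D'$ inside its span (Lemma \ref{transitive-monodromy}, available because $\dim\Span_{nD}D'\leq d-2$) controls the incidences of the curve with the configuration. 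I then expect a dimension count on $\bigcup_{D'}L_{D'}$, using the abelian-translate coplanarity (any $n-1$ spans lie in a common hyperplane) to bound how the family spreads, to contradict $d$-minimality by producing a map of degree at most $2d$, in the spirit of Lemma \ref{elliptic-case}.

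The main obstacle is precisely this transfer from special to general pairs. The difficulty is structural: $\dim(\Span_{nD}D_1\cap\Span_{nD}D_2)$ is upper semicontinuous in $(D_1,D_2)$, so its generic value is the \emph{minimum}, while the pairs furnished by \eqref{dagger} (which share a point of $X$) form a proper closed subvariety on which this dimension jumps \emph{up}. Thus the special pairs bound the generic intersection dimension only from the wrong side, and the genuine content is the estimate forcing a general pair to meet beyond $V_{|nD|}$ as well. I anticipate that the hypothesis $\dim\Span_{nD}D'\leq d-2$, through Lemma \ref{transitive-monodromy}, and $d$-minimality, through the impossibility of low-degree maps as in Lemma \ref{elliptic-case}, are the decisive ingredients that make the dimension count close.
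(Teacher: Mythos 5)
Your reduction via Lemma \ref{lem:s_n_minus_one} to the inequality $s(n)-s(n-1)-1 > \dim V_{|nD|}$ is correct, your $n=2$ case matches the paper's, and you have correctly located both the role of \eqref{dagger} and the structural difficulty: pairs sharing a point of $X$ are special, so semicontinuity bounds the generic intersection dimension from the wrong side. But your proposal stops exactly at that difficulty. The contradiction scheme you sketch for $n \geqslant 3$ (project from $V_{|nD|}$, study the family of pairwise disjoint spaces $L_{D'}$, and ``expect a dimension count'' producing a degree $\leqslant 2d$ map) is not carried out, and it is not the mechanism that makes the statement true; nothing in your sketch explains how the incidences forced by \eqref{dagger} interact with generic disjointness to yield a contradiction. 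As written, this is a plan, not a proof, and the decisive step is missing.

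The missing idea is to project not from $V_{|nD|}$ but from $\Span_{nD}F$, one member of the special pair. Take $E, F \in A$ general, realize $|nD| = |(n-1)E + F|$, and use \eqref{dagger} to choose $F'$ with $F \cap F' = \{x\}$. Although the pair $(F, F')$ is special, $F'$ is \emph{individually} general, so $\dim \Span_{nD}F' = s(n)$ and, crucially, $\dim \Span_{(n-1)E}F' = s(n-1)$ exactly; this is how information about the special pair gets transferred to the generic quantity $s(n-1)$. Now compute the projection $\pi$ from $\Span_{nD}F$ in two ways. On one hand, $\Span_{nD}F \cap \Span_{nD}F'$ contains both $V_{|nD|}$ and the point $x \notin V_{|nD|}$, so it has dimension at least $\dim V_{|nD|}+1$, whence $\dim \pi(\Span_{nD}F') \leqslant s(n) - \dim V_{|nD|} - 2$. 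On the other hand, since the hypothesis $s(n) \leqslant d-2$ permits Lemma \ref{transitive-monodromy}, removing $x$ from $F'$ does not change spans, and $F'\smallsetminus\{x\}$ is disjoint from $\Span_{nD}F$ by Lemma \ref{lem:spans_meets_divisor}, so $\pi(\Span_{nD}F') = \Span_{nD-F}\left(F'\smallsetminus\{x\}\right) = \Span_{(n-1)E}F'$, of dimension exactly $s(n-1)$. Combining, $s(n-1) < s(n) - \dim V_{|nD|} - 1$, i.e., the generic intersection dimension $s(n)-s(n-1)-1$ exceeds $\dim V_{|nD|}$, which is the claim by Lemma \ref{lem:s_n_minus_one}. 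Your semicontinuity worry is thus circumvented: the special pair's large intersection is used to bound $s(n-1)$ from above, never to bound the generic intersection directly.
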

\begin{proof}
    If $n=2$, $D' \in A$ is general, and $\Span_{|2D|} D'$ is not a hyperplane, then projection from $\Span_{|2D|} D'$ defines a morphism from $X$ to a positive-dimensional projective space of degree at most $d$. Therefore $\Span_{|2D|} D'$ is a hyperplane, and so for a general pair $D_1, D_2$, we have that $\Span_{|2D|} D_1 \cap \Span_{|2D|} D_2$ has codimension $2$.  Since \(V_{|2D|}\) has codimension at least \(3\) by Lemma \ref{elliptic-case}, the conclusion holds.
        
 Assume for the remainder of the proof that $n \geqslant 3$.
    Consider the linear system $(n-1)E$ for a general $E \in A$ and another general divisor $F \in A$. Choose a point $x \in F$ and a divisor $F'$ such that $F \cap F'=\{x\}$; this is possible since \eqref{dagger} holds. Since $F$ was general, $F'$ is general as well (although the pair \(F, F'\) is not general). In particular $\dim\Span_{|(n-1)E|}F'=s(n-1)$. Consider the linear system $|(n-1)E+F|$. Since $E$ and $F$ are general, $|(n-1)E+F|=|nD|$ for a general $D$ and $F, D$ form a general pair.  The points of $F$ do not belong to $V_{|nD|}$ (for example, by Lemma \ref{lem:spans_meets_divisor}). Therefore, both $\Span_{|nD|}F$ and $\Span_{|nD|}F'$ contain the point $x$, which is outside of $V_{|nD|}$, so
    \[\Span_{|nD|}F \cap \Span_{|nD|}F' \neq V_{|nD|}.\]
    We have $F \cap F'=x$ by construction.  Considering the projection \(\pi\) from \(\Span_{|nD|} F\), we have
    \begin{equation}\label{dimproj1}\dim \pi(\Span_{|nD|} F') = \dim \Span_{|nD|} F' - \dim(\Span_{|nD|}F \cap \Span_{|nD|}F') - 1 < s(n) - \dim V_{|nD|} - 1. \end{equation}
    On the other hand, 
 \begin{align}\label{dimproj2}
 \pi(\Span_{|nD|} F') &=  \pi(\Span_{|nD|} (F' \smallsetminus \{x\})) & (\text{Lemma  \ref{transitive-monodromy}})\\
 &=  \Span_{|nD-F|} \left( F' \smallsetminus \{x\}\right) &  \text{(Lemma \ref{lem:spans_meets_divisor})} \notag \\
 &= \Span_{|(n-1)E|} \left(F' \smallsetminus \{x\} \right) \notag \\
 &= \Span_{|(n-1)E|} F'  & (\text{Lemma  \ref{transitive-monodromy}}).\notag
 \end{align}
    Combining \eqref{dimproj1} and \eqref{dimproj2} we see
    \[s(n-1) = \dim \Span_{|(n-1)E|} F' <  s(n) - \dim V_{|nD|} - 1.\]
    Therefore, by Lemma \ref{lem:s_n_minus_one}, the intersection of a general pair of divisor spans is larger than \(V_{|nD|}\).
\end{proof}

We are now in the position to analyze the geometry of linear systems obtained by projecting \(|nD|\) from the subspace \(V_{|nD|}\).
To do so we introduce the following definition.

\begin{definition}
Suppose $n\geqslant 2$ is an integer. We denote by $|nD|'$ the linear system obtained from $|nD|$ by projection from $V_{|nD|}$. Similarly, let $r'(n)=\dim |nD|'$ and $s'(n)=\dim_{|nD|'}\Span D'$ for general $D, D' \in A$.
\end{definition}

\noindent Proposition \ref{extra-intersections} immediately implies:
\begin{corollary}\label{cor:s(n)-grows}
Suppose \eqref{dagger} holds. Then we have $s'(n) \geqslant \min(s(n-1)+1, d-1)$.
\end{corollary}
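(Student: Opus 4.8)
The plan is to convert the purely set-theoretic conclusion of Proposition \ref{extra-intersections} into the numerical inequality by combining the projection description of \(s'(n)\) with Lemma \ref{lem:s_n_minus_one}. First I would record the projection formula: since \(V_{|nD|} \subseteq \Span_{nD} D'\) for every \(D'\) in the locus defining \(V_{|nD|}\), projecting the \(s(n)\)-dimensional span \(\Span_{nD} D'\) from \(V_{|nD|}\) gives
\[ s'(n) = s(n) - \dim V_{|nD|} - 1. \]
Next, for a general pair \(D_1, D_2 \in A\) both spans contain \(V_{|nD|}\), so \(V_{|nD|} \subseteq \Span_{nD} D_1 \cap \Span_{nD} D_2\); together with Lemma \ref{lem:s_n_minus_one}, which gives \(\dim(\Span_{nD} D_1 \cap \Span_{nD} D_2) = s(n) - s(n-1) - 1\), this already yields the unconditional bound \(s'(n) \geqslant s(n-1)\). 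The entire task is therefore to upgrade this to the strict inequality \(\dim V_{|nD|} < \dim(\Span_{nD} D_1 \cap \Span_{nD} D_2)\), which is precisely the content of Proposition \ref{extra-intersections}.

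I would then split on the value of \(s(n)\), noting that a general \(D'\) has \(d\) points, so \(s(n) \leqslant d-1\) always. When \(\dim \Span_{nD} D' = s(n) \leqslant d-2\), Proposition \ref{extra-intersections} applies as stated: \(\Span_{nD} D_1 \cap \Span_{nD} D_2 \neq V_{|nD|}\), so \(\dim V_{|nD|}\) is strictly smaller than \(\dim(\Span_{nD}D_1 \cap \Span_{nD}D_2)\), and the projection formula delivers \(s'(n) \geqslant s(n-1) + 1 \geqslant \min(s(n-1)+1, d-1)\). The only remaining possibility is \(s(n) = d-1\), where I further distinguish the value of \(s(n-1)\). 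If \(s(n-1) = d-1\), then Lemma \ref{lem:s_n_minus_one} forces \(\dim(\Span_{nD} D_1 \cap \Span_{nD} D_2) = -1\); hence \(V_{|nD|} = \emptyset\) and \(s'(n) = s(n) = d-1 = \min(d, d-1)\), as required.

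The hard part will be the boundary case \(s(n) = d-1\) with \(s(n-1) \leqslant d-2\), since here the stated hypothesis of Proposition \ref{extra-intersections} fails: Lemma \ref{transitive-monodromy} is unavailable for \(|nD|\). I would resolve this by observing that the proof of Proposition \ref{extra-intersections} actually only invokes Lemma \ref{transitive-monodromy} for the system \(|(n-1)E|\), so that \(s(n-1) \leqslant d-2\) alone suffices. Concretely, in the chain \eqref{dimproj2} the first equality \(\pi(\Span_{nD} F') = \pi(\Span_{nD}(F' \smallsetminus \{x\}))\) holds formally rather than by Lemma \ref{transitive-monodromy}: because the projection center \(\Span_{nD} F\) already contains the point \(x = F \cap F'\), we have \(\Span_{nD} F' = \langle \Span_{nD}(F' \smallsetminus \{x\}), x \rangle\) and \(\pi\) collapses \(x\), so \(\pi(\Span_{nD} F') = \pi(\Span_{nD}(F' \smallsetminus \{x\}))\) regardless of whether \(F' \smallsetminus \{x\}\) already spans \(\Span_{nD} F'\). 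The subsequent identifications then use only Lemma \ref{lem:spans_meets_divisor} (to realize the projection from \(\Span_{nD} F\) as the map attached to \(|nD - F| = |(n-1)E|\) on the disjoint support \(F' \smallsetminus \{x\}\)) and Lemma \ref{transitive-monodromy} applied to \(|(n-1)E|\), which is legitimate since \(s(n-1) \leqslant d-2\). This preserves the inequality \(s(n-1) < s(n) - \dim V_{|nD|} - 1 = s'(n)\) from \eqref{dimproj1}, yielding \(s'(n) \geqslant s(n-1)+1 = \min(s(n-1)+1, d-1)\) and completing every case.
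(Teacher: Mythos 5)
Your proposal is correct, and its engine is the same as the paper's: Proposition \ref{extra-intersections} plus a linear projection count. The bookkeeping differs superficially — the paper assumes $s'(n) \leqslant d-2$, cites the Proposition, and projects $\Span_{|nD|'}D_2$ from $\Span_{|nD|'}D_1$ inside the projected system (using Lemma \ref{lem:spans_meets_divisor} to keep $D_2$ off the center), while you stay in $|nD|$ and combine the identity $s'(n) = s(n) - \dim V_{|nD|} - 1$ with Lemma \ref{lem:s_n_minus_one}; these are equivalent computations. The substantive difference is your case analysis. Read literally, the paper's proof invokes Proposition \ref{extra-intersections}, whose stated hypothesis is $s(n) \leqslant d-2$, under the weaker assumption $s'(n) \leqslant d-2$; these can genuinely diverge when $V_{|nD|} \neq \emptyset$ and $s(n) = d-1$, which is exactly your boundary case. (The paper's proof can be repaired by re-running the Proposition's argument inside $|nD|'$, whose maximal common subspace is empty, but this is left implicit.) Your resolution of the boundary case is sound on both branches: when $s(n-1) = d-1$, Lemma \ref{lem:s_n_minus_one} makes general spans disjoint, forcing $V_{|nD|} = \emptyset$ and $s'(n) = d-1$; and when $s(n-1) \leqslant d-2$, your observation that the first equality of \eqref{dimproj2} is formal — projection from a center containing $x$ identifies $\pi(\Span_{nD}F')$ with $\pi(\Span_{nD}(F' \smallsetminus \{x\}))$ without any monodromy input — is correct, and it shows the Proposition's hypothesis can be weakened to $s(n-1) \leqslant d-2$, which is all the chain \eqref{dimproj1}--\eqref{dimproj2} needs (for $n=2$ there is nothing to patch, as that case of the Proposition's proof never uses the $d-2$ bound). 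So your proof is not merely a valid alternative: it makes airtight a step the paper's two-line derivation glosses over, at the cost of being longer.
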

\begin{proof}
Suppose $s'(n) \leqslant d-2$. By Proposition \ref{extra-intersections}, for general $D_1, D_2 \in A$ the spaces \(\Span_{|nD|'} D_1\), \( \Span_{|nD|'} D_2\) share a point. Since \(D_1, D_2\) are general, no point of \(D_2\) belongs to \(\Span_{|nD|'} D_1\) by Lemma \ref{lem:spans_meets_divisor}.  Projecting from \(\Span_{|nD|'} D_1\) we get 
\[\dim \Span_{|nD|'} D_2 \geqslant \dim \Span_{|nD - D_1|} D_2+1.\]  Therefore  \(s'(n) \geqslant s(n-1)+1.\)
\end{proof}

\begin{theorem}\label{thm:nD_lower_bound}
Suppose that we are in Setup \ref{main_setup} and $X$ is $d$-minimal. Suppose \eqref{dagger} holds. Then for a general divisor $D \in A(k)$ and every number $n \leqslant d$ we have 
\[\dim |nD| \geqslant \dim |nD|' \geqslant \frac{n(n+1)}{2} - 1.\]
\end{theorem}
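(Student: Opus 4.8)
The first inequality $\dim |nD| \geqslant \dim |nD|'$ is immediate, since $|nD|'$ is by definition a linear projection of $|nD|$ and projection cannot increase the dimension of a projective space. The case $n=1$ is trivial (both sides equal $0$), so the plan is to prove $r'(n) \geqslant \frac{n(n+1)}{2}-1$ for $2 \leqslant n \leqslant d$ by a two-part induction. First I would establish a recursion $r'(n) \geqslant r'(n-1) + s'(n) + 1$ relating consecutive primed systems, and then show that the span dimensions grow maximally, namely $s'(n) \geqslant n-1$. Summing the recursion telescopically then yields the quadratic bound.

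For the recursion I would start from the definitional identity $r(n) - r(n-1) = s(n)+1$. Writing $v \colonequals \dim V_{|nD|}$, the definitions of the primed quantities give $r'(n) = r(n) - v - 1$ and $s'(n) = s(n) - v - 1$, so a short computation eliminates $v$:
\[ r'(n) = r(n) - v - 1 = \left(r(n-1)+s(n)+1\right) - v - 1 = r(n-1) + \left(s(n) - v\right) = r(n-1) + s'(n) + 1. \]
Since $|(n-1)D|'$ is a projection of $|(n-1)D|$ we have $r(n-1) \geqslant r'(n-1)$, and hence $r'(n) \geqslant r'(n-1) + s'(n) + 1$. Thus this step is pure bookkeeping once the unprimed identity and the definitions of $r'$, $s'$ are in hand.

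The heart of the argument is the bound $s'(n) \geqslant n-1$ for $2 \leqslant n \leqslant d$, proved by induction on $n$. For the base case $n=2$, Lemma \ref{elliptic-case} shows $V_{|2D|}$ has codimension at least $3$ in $|2D|$, so $r'(2) \geqslant 2$; since a general span $\Span_{2D}D'$ is a hyperplane (as in Lemma \ref{lem:spans_meets_divisor}), the same cancellation as above gives $s'(2) = r'(2)-1 \geqslant 1$. For the inductive step, Corollary \ref{cor:s(n)-grows} (which is precisely where hypothesis \eqref{dagger} enters) gives $s'(n) \geqslant \min(s(n-1)+1, d-1)$; combining with $s(n-1) \geqslant s'(n-1)$ (projection only decreases span dimension) and the trivial bound $s'(n-1) \leqslant d-1$ (a degree $d$ divisor spans at most a $\pp^{d-1}$) produces $s'(n) \geqslant \min(s'(n-1)+1, d-1)$. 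If $s'(n-1) \leqslant d-2$ the right-hand side is $s'(n-1)+1 \geqslant (n-2)+1 = n-1$; if instead $s'(n-1) = d-1$, it equals $d-1 \geqslant n-1$ because $n \leqslant d$. Either way $s'(n) \geqslant n-1$.

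Finally, feeding $s'(j) \geqslant j-1$ into the recursion and telescoping from the base value $r'(2) \geqslant 2$ gives
\[ r'(n) \geqslant r'(2) + \sum_{j=3}^n \left(s'(j)+1\right) \geqslant 2 + \sum_{j=3}^n j = \frac{n(n+1)}{2}-1,\]
which is the claimed bound. I expect the main obstacle to be the span-growth induction: everything hinges on Corollary \ref{cor:s(n)-grows}, hence on condition \eqref{dagger}, and one must track the saturation threshold at $d-1$ (where a degree $d$ divisor becomes linearly nondegenerate) carefully enough to ensure the inequality $s'(n)\geqslant n-1$ survives all the way to $n=d$ rather than stalling at a smaller index.
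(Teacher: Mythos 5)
Your proof is correct and follows essentially the same route as the paper's: the base case $r'(2)\geqslant 2$, $s'(2)\geqslant 1$ from Lemma \ref{elliptic-case}, the growth bound $s'(n)\geqslant\min(s(n-1)+1,d-1)$ from Corollary \ref{cor:s(n)-grows} (the only place \eqref{dagger} is used), and telescoping the projection identity $r'(n)=s'(n)+1+r(n-1)$. The only cosmetic difference is that you telescope in the primed quantities via $r(n-1)\geqslant r'(n-1)$, while the paper expands $r(n-1)$ through the unprimed identities down to $r(2)$; the two computations are equivalent.
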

\begin{proof}
By Lemma \ref{elliptic-case}, we have $r'(2) \geqslant 2$ and $s'(2)\geqslant 1$. Combining \(s'(2) \geqslant 1\) with Corollary \ref{cor:s(n)-grows}, we have \(s(n) \geqslant s'(n) \geqslant \min(d-1, n-1)\). Therefore for $2<n\leqslant d$,  
\begin{align*}
r'(n) &= (s'(n) + 1) + r(n-1) = (s'(n)+1) + (s(n-1) + 1) + \cdots + (s(3) + 1) + r(2) \\
&\geqslant \frac{n(n+1)}{2} - 1.\qedhere
\end{align*}
\end{proof} 
Theorem \ref{thm:nD_lower_bound} can be used to bound the genus of curves for which  condition \eqref{dagger} holds. When \eqref{dagger} does not hold, we use the following lemma instead.

\begin{lemma}\label{genus-fix-1}
Suppose we are in Setup \ref{main_setup} and $X$ is $d$-minimal. Suppose $r(2)\geqslant 3$ and $d \geqslant 4$. Then $r'(3) \geqslant 7$.
\end{lemma}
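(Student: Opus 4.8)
The plan is to reduce the statement to a single lower bound on $s'(3)$ and then to split into cases according to whether condition \eqref{dagger} holds. Throughout, $D \in A(k)$ is a general divisor and all spans are taken in $|nD|$ unless decorated otherwise.

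\textbf{Reduction.} Exactly as in the proof of Theorem \ref{thm:nD_lower_bound}, one has the identity $r'(3) = r(2) + s'(3) + 1$, so that $r'(3) \geqslant 7$ is equivalent to $s'(3) \geqslant 6 - r(2)$. Since $\gon_k X > d$ forces $h^0(X,\O(D)) = 1$, we have $r(1) = \dim|D| = 0$, and the relation $r(n) - r(n-1) = s(n)+1$ gives $s(2) = r(2) - 1$. Moreover $s'(3) \geqslant s(2)$ holds unconditionally: writing $s'(3) = s(3) - \dim V_{|3D|} - 1$ and combining Lemma \ref{lem:s_n_minus_one} with the containment $V_{|3D|} \subseteq \Span_{3D}D_1 \cap \Span_{3D}D_2$ for a general pair $D_1, D_2$ yields $s(3) = s(2) + \dim(\Span_{3D}D_1 \cap \Span_{3D}D_2) + 1 \geqslant s(2) + \dim V_{|3D|} + 1$. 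Substituting gives $r'(3) \geqslant r(2) + s(2) + 1 = 2r(2)$, which already settles every case with $r(2) \geqslant 4$.

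\textbf{The case $r(2) = 3$.} Now $s(2) = 2$, and I must improve the unconditional bound by one, i.e. show $s'(3) \geqslant 3$; equivalently, that a general pair of divisor spans meets strictly above $V_{|3D|}$, so that $\dim(\Span_{3D}D_1 \cap \Span_{3D}D_2) \geqslant \dim V_{|3D|} + 1$. If \eqref{dagger} holds this is immediate from Corollary \ref{cor:s(n)-grows}, which gives $s'(3) \geqslant \min(s(2)+1, d-1) = \min(3, d-1) = 3$ because $d \geqslant 4$. It therefore remains to handle the case in which \eqref{dagger} fails, which is the heart of the lemma.

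\textbf{The case where \eqref{dagger} fails.} I would work directly in the projected system $|3D|'$ (where $V_{|3D|}$ has been contracted) and argue $s'(3) \geqslant 3$ without reference to \eqref{dagger}. By Lemma \ref{two-divisors-through-a-point} a general point $P \in X$ lies on two distinct divisors $F, F' \in A$; set $T = F \cap F' \ni P$. The $s'(3)$-planes $\Span_{|3D|'}F$ and $\Span_{|3D|'}F'$ meet along (at least) $\Span_{|3D|'}T$. Choosing a general $E$ with $3D \sim 2E + F$ and projecting $\Span_{|3D|'}F'$ away from $\Span_{|3D|'}F$, Lemma \ref{lem:spans_meets_divisor} identifies the image with the span of $F' \smallsetminus T$ inside the embedding $|2E| \colon X \hookrightarrow \PP^3$, giving
\[ \dim \Span_{2E}(F' \smallsetminus T) \leqslant s'(3) - \dim \Span_{|3D|'}T - 1. \]
Assuming the $|T|$ points of $T$ are linearly independent (so $\dim \Span_{|3D|'}T = |T| - 1$) and the $d - |T|$ points of $F' \smallsetminus T$ span $\min(d - |T| - 1,\, 2)$ dimensions in their plane, a short case analysis over $|T| \in \{2, \dots, d-1\}$, using $d \geqslant 4$, forces $s'(3) \geqslant 3$ in every case (for instance when $d - |T| \geqslant 3$ one gets $s'(3) \geqslant |T| + 2 \geqslant 4$, and when $d - |T| \leqslant 2$ one gets $s'(3) \geqslant d - 1 \geqslant 3$).

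\textbf{Main obstacle.} The crux is supplying the two general-position inputs used above: that the shared points $T$ remain linearly independent in $|3D|'$, and that deleting $T$ from $F'$ does not drop the span of the remaining points below $\min(d-|T|-1,2)$. Since $T$ necessarily lies on the line $\Span_{2E}F \cap \Span_{2E}F'$ in $\PP^3$, this is genuinely a question of how many points of a ``half'' hyperplane section of $X \subset \PP^3$ can be collinear. I expect to establish these from a linear general position (uniform position) statement for the points of a general divisor in $A$, extracted from the transitivity -- and higher transitivity -- of the monodromy of the incidence correspondence $I$ of Section \ref{sec:Mordell-Lang}, in the same spirit in which Lemma \ref{transitive-monodromy} derives single-point nondegeneracy from transitivity. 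Controlling the size of $T$ when \eqref{dagger} fails, equivalently bounding the multisecancy of these special lines, is the part most likely to require care.
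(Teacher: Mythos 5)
Your reduction (to the single bad case $r(2)=3$, $s'(3)=s(2)=2$) and your handling of the case where \eqref{dagger} holds coincide with the paper's proof. The genuine gap is exactly where you flag it: the case where \eqref{dagger} fails, which is the heart of the lemma. Your case analysis requires (a) that the points of $T=F\cap F'$ be linearly independent in $|3D|'$, and (b) a lower bound on the span of $F'\smallsetminus T$, and you propose to extract both from a uniform-position statement for general divisors of $A$. That route cannot work, because the pair $(F,F')$ is by construction \emph{not} general: $T$ is cut out by the special coincidence of two divisors sharing at least two points, and monodromy transitivity for a single general divisor (the input to Lemma \ref{transitive-monodromy}) says nothing about the position of such a distinguished subset. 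In fact $T$ sits in the opposite of general position: in the embedding $|2D|\simeq\PP^3$ its points all lie on the line $\Span_{2D}F\cap\Span_{2D}F'$; and under the hypothesis $s'(3)=2$ that you are trying to contradict, $\Span_{|3D|'}F\cap\Span_{|3D|'}F'$ is at most a line, so for $|T|\geqslant 3$ your assumption (a) is false in the very configuration being analyzed --- any proof of (a) must already contain the desired contradiction, so it cannot be supplied as an independent input.

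The repair is to run your displayed projection inequality with much weaker inputs, which is what the paper does. First, $|T|\leqslant d-2$ (the point you leave open): if $F\smallsetminus T$ were a single point, then $F$ minus one point would lie on the line $\Span_{2D}F\cap\Span_{2D}F'$, contradicting Lemma \ref{transitive-monodromy}, which applies since $s(2)=2\leqslant d-2$ for $d\geqslant 4$. Second, you do not need independence of $T$, only $\dim\bigl(\Span_{|3D|'}F\cap\Span_{|3D|'}F'\bigr)\geqslant 1$: the two $2$-planes are distinct by Lemma \ref{lem:spans_meets_divisor} and contain the $\geqslant 2$ distinct points of $T$, hence meet along a line. (That distinct points of $T$ stay distinct after projection from $V_{|3D|}$ follows by composing with projection from a third, general divisor span, landing in a general system $|2E''|$ which separates the points of the general divisor $F'$.) Then projection from $\Span_{|3D|'}F$ sends the $\geqslant 2$ points of $F'\smallsetminus T$ to a single point of $|2E|$, contradicting that $|2E|$ separates the points of $F'$, since $(F',E)$ is a general pair. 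With these substitutions your argument closes and becomes precisely the paper's proof, with $(F,F',E)$ in the roles of $(D_2,D_1,D)$ and $|3D|'=|2E+F|'$ in the role of $|2D+D_2|'$.
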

\begin{proof}
Because \(X\) does not admit a degree \(d\) map to \(\pp^1\), we have \(s(2)=r(2)-1 \geqslant 2\).  We also have $s'(3) \geqslant s(2)$, and $r'(3)=s'(3)+r(2)+1$. Therefore the only case in which $r'(3)=6$ is $r(2)=3$, $s'(3)=s(2)=2$.

By Lemma \ref{two-divisors-through-a-point}, for a general point $P \in X$ there exists a pair of divisors $D_1, D_2 \in U$, $D_1 \neq D_2$ both containing $P$. We can assume that both $D_1, D_2$ are general divisors. If condition \eqref{dagger} holds, then $s'(3)> s(2)$ by Corollary \ref{cor:s(n)-grows}, contradicting our calculation that \(s'(3)=s(2) =2\) above.  We may therefore assume that $D_1 \cap D_2$ contains at least $2$ points, but that there exists some point \(y \in D_2 \smallsetminus D_1\). Choose a general divisor $D \in U$. By Lemma \ref{lem:spans_meets_divisor}, the point \(y\) is in \(\Span_{|2D|}D_2\) but not in $\Span_{|2D|}D_1$. Therefore the planes $\Span_{|2D|} D_1$ and $\Span_{|2D|}D_2$ intersect along a line.

 By Lemma \ref{transitive-monodromy} the intersection $D_1 \cap D_2$ contains at most $d-2$ points.  Consider now the embedding of $X$ into $\PP^6$ given by the linear system $|2D+D_2|'$. Since $D$ was general, the spans of $D_1$ and $D_2$ in this linear system have dimension $s'(3)=2$. However $\Span_{|2D+D_2|'}D_1$ and $\Span_{|2D+D_2|'} D_2$ share at least $2$ points, and therefore intersect along a line, again applying Lemma \ref{lem:spans_meets_divisor}.  Therefore the projection from $\Span_{|2D+D_2|'}D_2$ maps all points of $D_1 \smallsetminus D_2$ onto a single point in $|2D|$. Since $D_1 \smallsetminus D_2$ contains at least $2$ points, this is a contradiction.
\end{proof}

A similar argument can be used to improve the estimate for the value of $r(4)$; it will be useful in our considerations of low values of $d$.

\begin{lemma}\label{genus-fix-2}
Suppose we are in Setup \ref{main_setup} and $X$ is $d$-minimal, and $d \geqslant 5$ is odd. Suppose \eqref{dagger} does not hold. Then $r'(4) \geqslant 12$
\end{lemma}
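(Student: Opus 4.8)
The plan is to run the strategy of Lemma~\ref{genus-fix-1} one step higher: bound $r'(4)$ from below by the generic recursion, reduce to a single extremal configuration, and exclude that configuration by a multisecant/collapse argument in which the parity of $d$ is what breaks the tie.

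I would first record the recursion. Since $X$ is $d$-minimal every $\Span_{2D}D'$ is a hyperplane, so $s(2)=r(2)-1$, and the generic inequalities $s(n)\geqslant s'(n)\geqslant s(n-1)$ (the second being the estimate behind Corollary~\ref{cor:s(n)-grows}) together with $r'(4)=s'(4)+1+r(3)$ and $r(3)=s(3)+1+r(2)$ give
\[
r'(4)=s'(4)+s(3)+r(2)+2\geqslant 2s(3)+r(2)+2\geqslant 3\,r(2).
\]
Because \eqref{dagger} fails, Lemma~\ref{lemma:two_divisors_one_point} rules out $r(2)=2$, so $r(2)\geqslant 3$; and if $r(2)\geqslant 4$ the display already yields $r'(4)\geqslant 12$. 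Thus I may assume $r(2)=3$, whence $s(2)=2$. Now Lemma~\ref{genus-fix-1} (whose hypotheses $r(2)\geqslant 3$, $d\geqslant 4$ hold) gives $r'(3)\geqslant 7$, hence $s'(3)\geqslant 3$ and $s(3)\geqslant 3$; feeding this back, $r'(4)\geqslant 2s(3)+5\geqslant 11$. The task is therefore to exclude the single extremal case $r'(4)=11$, which forces $s(3)=s'(3)=3$ (so $V_{|3D|}=\emptyset$ and $r(3)=7$) and $s'(4)=s(3)=3$.

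To exclude it I mimic the collapse in Lemma~\ref{genus-fix-1}. Since \eqref{dagger} fails, Lemma~\ref{two-divisors-through-a-point} produces two individually general divisors $D_1,D_2\in A$ through a general point with $t\colonequals \#(D_1\cap D_2)\geqslant 2$; inside $|2D|\simeq\PP^3$ the planes $\Span_{2D}D_1$, $\Span_{2D}D_2$ meet in a line, and Lemma~\ref{transitive-monodromy} bounds $t\leqslant d-2$, so $D_1\smallsetminus D_2\neq\emptyset$. I then pass to the twisted system $|3D+D_2|'=|4\widetilde D|'\simeq \PP^{11}$, in which $\Span D_1$ and $\Span D_2$ are $3$-planes sharing the $\geqslant 2$ points of $D_1\cap D_2$ (none lying on $V$, by Lemma~\ref{lem:spans_meets_divisor}), hence meeting in at least a line. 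Projecting from $\Span D_2$ and arguing exactly as in Proposition~\ref{extra-intersections} (via Lemmas~\ref{lem:spans_meets_divisor} and~\ref{transitive-monodromy}) identifies the image of $\Span D_1$ with $\Span_{|3D|}(D_1\smallsetminus D_2)$ and simultaneously forces its dimension down to at most $1$. Thus the $d-t$ points of $D_1\smallsetminus D_2$ become collinear in the degree-$3d$ model $X\subset\PP^7$, while the $t$ points of $D_1\cap D_2$ are collinear in the degree-$2d$ model $X\subset\PP^3$.

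The main obstacle is turning this double collinearity into a contradiction, and this is exactly where the hypothesis that $d$ is odd is needed. The two secant lines should assemble into a low-degree auxiliary map---either a pencil cutting out $D_1\smallsetminus D_2$ or a projection realizing $X$ on a plane conic---and oddness of $d$ forbids a pointless-conic target, forcing a genuine map of degree $\leqslant d$ to $\PP^1$ or an elliptic curve and so contradicting $d$-minimality (or the standing assumption $r(2)=3$); this is the same parity dichotomy already exploited in the proof of Theorem~\ref{2D-is-birational}. I expect the delicate points to be: (i) checking that $D_1,D_2$ stay general enough in $|3D+D_2|'$ for their spans to retain the extremal dimension $s'(4)$, and (ii) pinning down precisely which multisecant configuration the collinearities produce so that the parity argument applies, where the sub-case $t=d-2$ (so that $D_1\smallsetminus D_2$ has only two points) will likely require separate and most careful treatment.
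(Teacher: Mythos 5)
Your reduction to the extremal case matches the paper exactly: the recursion $r'(4)=s'(4)+s(3)+r(2)+2\geqslant 3r(2)$, the use of Lemma~\ref{lemma:two_divisors_one_point} to force $r(2)\geqslant 3$, and Lemma~\ref{genus-fix-1} to get $r'(3)\geqslant 7$, leaving only the case $r(2)=3$, $r(3)=r'(3)=7$, $s(3)=s'(4)=3$, $r'(4)=11$ to exclude. Your derivation that $D_1\smallsetminus D_2$ lies on a line in $|3D|$ (projecting inside $|3D+D_2|'$ from $\Span D_2$) is also the paper's step. But the endgame has a genuine gap, in two respects. First, you place the collinearity of $D_1\cap D_2$ in the wrong model: you only obtain it inside $|2D|$, where it is automatic (two distinct hyperplanes of $\pp^3$ meet in a line) and carries no information, and it cannot be transported to $|3D|$, since projections only ever decrease spans. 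What is needed is collinearity of $D_1\cap D_2$ \emph{in} $|3D|$, which the paper proves by a separate dichotomy: if $\dim\left(\Span_{|3D|}D_1\cap\Span_{|3D|}D_2\right)=2$, then projection from $\Span_{|3D|}D_2$ collapses $D_1\smallsetminus D_2$ to a single point of $|3D-D_2|$, contradicting the generality of the pair $(D_1,3D-D_2)$; hence that intersection is a line, and it contains $D_1\cap D_2$ by Lemma~\ref{lem:spans_meets_divisor}.

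Second, and more importantly, your concluding mechanism --- that the two secant lines ``should assemble into a low-degree auxiliary map'' to $\pp^1$ or a conic, with oddness of $d$ forbidding a pointless-conic target --- is a guess (you flag it yourself as the main obstacle), and it is not how parity actually enters. The paper's conclusion is a Galois-symmetry argument internal to $\Span_{|3D|}D_1\simeq\pp^3$: the $d$ points of $D_1$ are nondegenerate there (since $s(3)=3$), partitioned into $D_1\cap D_2$ and $D_1\smallsetminus D_2$, each of size at least $2$ and each lying on a line, so the two lines are skew; a nondegenerate set of $d\geqslant 5$ points in $\pp^3$ is contained in at most one pair of skew lines each holding at least two of the points; hence this pair of lines is stable under the Galois action on $D_1$, which is transitive because $D_1$ is a degree $d$ point. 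Transitivity then forces some element to interchange the two lines, so each carries exactly $d/2$ points of $D_1$, contradicting $d$ odd. No auxiliary map and no case analysis on $t=\#(D_1\cap D_2)$ is involved; without this (or an equivalent) final step, your argument does not close.
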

\begin{proof}
Since \eqref{dagger} does not hold, we have $r(2)\geqslant 3$ by Lemma \ref{lemma:two_divisors_one_point}.
Because \(X\) does not admit a degree \(d\) map to \(\pp^1\), we have \(s(2) = r(2)-1\).  Furthermore, by considering projections from divisor spans, we see that \(s'(n) \geqslant s(2)\) for all $n>2$, and that \(r'(4) \geqslant r(2) + 2s(2)+2\).
Combining these, if $r(2)\geqslant 4$, then $r'(4) \geqslant 3r(2) \geqslant 12$. It therefore suffices to consider the case $r(2)=3$.  In this case $r'(3)\geqslant 7$ by Lemma \ref{genus-fix-1}, and so $s(4) \geqslant s(3) \geqslant 3$. 
Again considering projection, we have
\[r'(4) = s'(4) + 1 + r(3) \geqslant s(3) + 1 + r'(3) \geqslant 3 + 1 + 7 =11.\]
Therefore the only way to have $r'(4) < 12$ is to have equality everywhere, and hence $r(2)=3$, $r(3) = r'(3)=7$, $s(3)=s'(4)=3$, and $r'(4)=11$.

Consider a general pair of divisors $D, D_1 \in A$. Suppose $D_2\in A$ is a divisor that shares points with $D_1$.  Note that $D_2$ is a general divisor in $A$ since $D_1$ is general, and moreover, since \eqref{dagger} does not hold, we can assume that $D_1 \cap D_2$ contains at least $2$ points.

Consider the linear system $|3D| = |3D|'$ and the subspaces $\Span_{|3D|}D_1$ and $\Span_{|3D|} D_2$. By assumption these are both $3$-dimensional. Since $\Span_{|3D|}D_1$ and $\Span_{|3D|} D_2$ have nontrivial intersection but do not coincide by Lemma \ref{lem:spans_meets_divisor}, Lemma \ref{transitive-monodromy} implies that $D_1 \setminus D_2$ contains at least $2$ points. Suppose $\dim \Span_{|3D|}D_1 \cap \Span_{|3D|}D_2 = 2$. Projecting from $\Span_{|3D|}D_2$ we see that in the linear system $|3D-D_2|$, the points of $D_1 \setminus D_2$ map to a single point. Since $D_1$ and $|3D-D_2|$ is a general pair of divisors, this is a contradiction. Therefore $\dim \Span_{|3D|}D_1 \cap \Span_{|3D|}D_2 = 1$, and so all points of $D_1 \cap D_2$ in $|3D|$ belong to a single line.

Consider the linear system $|3D + D_2|'$.  Since \(D\) is general, \(3D + D_2\) is a general point of \(A^{(4)}\). By assumption the subspaces $\Span_{|3D + D_2|'}D_1$ and $\Span_{|3D + D_2|'} D_2$ are $3$-dimensional, distinct, and meet in at least \(2\) points. Therefore the projection of $D_1 \setminus D_2$ from $\Span_{|3D + D_2|'} D_2$ maps to a space of dimension at most $1$ in \(|3D|\). Thus for a general divisor $D$, the image of $D_1$ in $|3D|$ is contained in a pair of skew lines each containing at least $2$ points (since $D_1 \setminus D_2$ and $D_1 \cap D_2$ both contain at least $2$ points). A nondegenerate set $S$ of $d \geqslant 5$ distinct points in \(\pp^3\) is contained in at most one pair of skew lines with each line containing at least $2$ points. Therefore the pair of lines $\Span_{|3D|} (D_1 \setminus D_2)$ and $\Span_{|3D|} (D_1 \cap D_2)$ are preserved by the Galois action on $D_1$, and, in particular, each line has to contain the same number of points of $D_1$. This contradicts the assumption that $d$ is odd.
\end{proof}

We now prove Theorem \ref{genus-bound} from the introduction.

\begin{theorem}\label{thm:weak-genus-bound}
Given an integer $d$, let $m \colonequals \lceil d/2\rceil -1$ and let $\varepsilon \colonequals 3d-1-6m<6$. Suppose $X$ is a $d$-minimal curve. If \eqref{dagger} holds, then the genus of $X$ is bounded by \[d(d-1)/2 + 1.\] If \eqref{dagger} does not hold, then the genus is at most \[ 3m(m-1)+m\varepsilon.\]
\end{theorem}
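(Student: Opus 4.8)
The plan is to use the lower bounds on $\dim|nD|$ from the previous results together with a Castelnuovo-type genus bound for nondegenerate curves in projective space. Since $X$ is $d$-minimal, Theorem \ref{2D-is-birational} guarantees that for a general $D \in A(k)$, the linear system $|nD|$ (for $n \geqslant 2$) is birationally very ample, so the image of $X$ in $\PP^{\dim|nD|}$ is a nondegenerate curve of degree $nd$ whose geometric genus equals $g(X)$. Castelnuovo's bound then controls $g$ in terms of the degree $nd$ and the ambient dimension $\dim|nD|$; the strategy is to pick the value of $n$ that gives the best such bound given our lower bounds on $\dim|nD|$.

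First I would treat the case where \eqref{dagger} holds. Here Theorem \ref{thm:nD_lower_bound} gives $\dim|nD| \geqslant \frac{n(n+1)}{2}-1$ for all $n \leqslant d$, and in particular taking $n = d$ embeds $X$ birationally as a nondegenerate degree $d^2$ curve in a projective space of dimension at least $\frac{d(d+1)}{2}-1$. However, a cleaner approach is to exploit the fact that a nondegenerate curve in $\PP^r$ has genus at most $\binom{r}{2}$ when its degree equals $r+1$ (the extremal rational normal curve situation), or more generally to apply Castelnuovo with a carefully chosen $n$. I would aim to show that the optimal choice recovers the bound $g \leqslant \frac{d(d-1)}{2}+1$; this matches the shape of taking $n$ near $d/2$ so that the embedding is by a complete-enough linear system to force the Castelnuovo estimate to collapse to this quantity. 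The key computation is to verify that among $n \in \{2,\dots,d\}$, the combination of $\deg = nd$ and $\dim|nD| \geqslant \frac{n(n+1)}{2}-1$ minimizes the resulting genus bound at the claimed value.

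For the case where \eqref{dagger} fails, the lower bounds from Theorem \ref{thm:nD_lower_bound} are unavailable, so I would instead feed the improved estimates of Lemmas \ref{genus-fix-1} and \ref{genus-fix-2} into the same Castelnuovo machinery. These lemmas give $r'(3) \geqslant 7$ (when $r(2) \geqslant 3$, which holds by Lemma \ref{lemma:two_divisors_one_point} since \eqref{dagger} fails) and, for odd $d \geqslant 5$, $r'(4) \geqslant 12$. Since the projected linear systems $|nD|'$ still birationally embed $X$ (projection from $V_{|nD|}$ does not change the generic behavior of the map on $X$, as $V$ is contained in almost every divisor span), these give nondegenerate embeddings of degrees $3d$ and $4d$ in $\PP^7$ and $\PP^{12}$ respectively. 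Bounding the geometric genus via Castelnuovo in each regime and optimizing over the parity of $d$ (using $m = \lceil d/2\rceil - 1$ and $\varepsilon = 3d-1-6m$ to package the even/odd cases uniformly) should yield $g \leqslant 3m(m-1)+m\varepsilon$.

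The main obstacle I anticipate is the bookkeeping in the Castelnuovo estimate itself: one must correctly invoke the classical bound $\pi(e,r)$ on the genus of a nondegenerate degree $e$ curve in $\PP^r$, confirm that the birational (rather than embedded) nature of the map still permits its application to the geometric genus, and then carry out the optimization over $n$ so that the two stated bounds emerge exactly. The failure-of-\eqref{dagger} case is the more delicate one, since the available dimension estimates are sparser (only $n = 3, 4$), forcing a parity-sensitive analysis; reconciling the even and odd cases of $d$ into the single uniform expression $3m(m-1)+m\varepsilon$ is where I expect the arithmetic to be most error-prone, and where Lemma \ref{genus-fix-2}'s oddness hypothesis must be carefully tracked.
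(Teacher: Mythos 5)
Your proposal follows the paper's own route: combine the dimension lower bounds on the linear systems with Castelnuovo's bound, using Theorem \ref{thm:nD_lower_bound} at $n=d$ when \eqref{dagger} holds, and Lemma \ref{genus-fix-1} (available because $r(2)\geqslant 3$ by Lemma \ref{lemma:two_divisors_one_point}) when it fails. Carried out, it proves the theorem. Two of your anticipated computations, however, resolve differently than you expect. First, the optimal choice in the \eqref{dagger} case is $n=d$ itself, not ``$n$ near $d/2$'': for $n<d$ the degree $nd$ exceeds twice the codimension bound, so the Castelnuovo parameter $m$ jumps above $1$ and the bound worsens; only at $n=d$ does one get $m=1$, where Castelnuovo collapses to $\deg - \dim = d^2 - \left(\tfrac{d(d+1)}{2}-1\right) = \tfrac{d(d-1)}{2}+1$. (Equivalently, as the paper notes, $|dD|$ is forced to be nonspecial by Clifford's theorem, and Riemann--Roch gives the bound directly.) Second, no parity optimization and no appeal to Lemma \ref{genus-fix-2} are needed in the other case: the expression $3m(m-1)+m\varepsilon$ with $m=\lceil d/2\rceil -1$, $\varepsilon = 3d-1-6m$ is \emph{exactly} the Castelnuovo function $\pi(3d,7)$, since $3d-1 = 6m+\varepsilon$ is just division with remainder by $6$; the even/odd cases of $d$ only manifest as $\varepsilon = 5$ or $2$. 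Lemma \ref{genus-fix-2} is reserved in the paper for sharpening the $d=5$ classification, not for this theorem. Finally, your parenthetical justification that $|nD|'$ is still birational should be replaced by the factoring argument: projection from $\Span_{|nD|}E \supset V_{|nD|}$ recovers the birational system $|nD-E|$, so the intermediate projection from $V_{|nD|}$ is birational too, and $V_{|nD|}\cap X = \emptyset$ ensures the image degree is still $nd$.
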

\begin{proof}
If \eqref{dagger} holds, this follows from Theorem \ref{thm:nD_lower_bound} for $n=d$ and Castelnuovo's genus bound (see \cite[Chapter III, page 116]{ACGH} for the proof of the bound, and Section \ref{sec:Applications} Equation \eqref{Castelnuovo-function} for the statement).  Alternatively, since any nondegenerate special curve in \(\pp^r\) has degree at least \(2r\), the linear system \(|dD|\) on \(X\) is nonspecial for \(d > 2\) and the genus bound follows from Riemann--Roch.

If \eqref{dagger} does not hold, this is Castelnuovo's bound for a degree $3d$ curve in $\PP^7$, which applies by Lemma \ref{genus-fix-1}.
\end{proof}

An immediate corollary of this bound is the theorem of Abramovich--Harris on degree $3$ points on curves.
\begin{corollary}\label{cor:3-min}
Suppose $\min(\dendegs(X/k)) =3$. Then $\bar{X}$ is a degree $3$ cover of $\pp^1$ or an elliptic curve.
\end{corollary}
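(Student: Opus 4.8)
The plan is to combine the genus bound from Theorem~\ref{thm:weak-genus-bound} with the foundational results of Harris--Silverman and Abramovich--Harris for small degree. First I would reduce to the $d$-minimal case: if $\airr_k X = 3$ but $X$ is not $3$-minimal, then by definition there exists a cover $\phi\colon X \to Y$ of degree at least $2$ with $\airr_k Y \cdot \deg \phi = 3$. Since $3$ is prime and $\deg \phi \geqslant 2$, we must have $\deg \phi = 3$ and $\airr_k Y = 1$. A curve $Y$ with $\airr_k Y = 1$ has infinitely many rational points, so by Faltings it has genus $0$ or $1$; in the genus $1$ case it must have positive rank. Thus $X$ is a degree $3$ cover of $\PP^1$ or of a positive-rank elliptic curve, and in particular $\bar X$ is a degree $3$ cover of $\PP^1$ or an elliptic curve, as desired.

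It therefore remains to rule out, or directly classify, the $3$-minimal case. Here I would invoke Theorem~\ref{thm:weak-genus-bound} with $d=3$. Computing the relevant constants: $m = \lceil 3/2 \rceil - 1 = 1$ and $\varepsilon = 3\cdot 3 - 1 - 6 = 2$. If condition~\eqref{dagger} holds, the genus is bounded by $d(d-1)/2 + 1 = 3\cdot 2/2 + 1 = 4$. If \eqref{dagger} fails, the bound is $3m(m-1) + m\varepsilon = 0 + 2 = 2$. So a $3$-minimal curve has genus at most $4$. The key remaining work is to examine curves of genus $2$, $3$, and $4$ with $\airr_k X = 3$ and show each either fails to be $3$-minimal or does not genuinely arise. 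By Lemma~\ref{lem:genus-at-least-2}, a $3$-minimal curve has genus at least $2$, so only genera $2,3,4$ are in play.

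The main obstacle is the low-genus case analysis, and I expect it to parallel the argument in Theorem~\ref{thm:no_2_min}. In each genus I would use the birational very ampleness of $|2D|$ (Theorem~\ref{2D-is-birational}) for a general degree $3$ divisor $D$ to realize $X$ as a curve of degree $6$ in $\PP^{\dim|2D|}$ and study its geometry. Since this corollary only claims the \emph{geometric} statement for $\bar X$ (a degree $3$ map to $\PP^1$ or an elliptic curve, without rationality or rank hypotheses), the analysis should be cleaner than the full arithmetic classification: it suffices to produce, over $\bar k$, a $g^1_3$ or a degree $3$ map to a genus $1$ curve. For genus $2$ every curve is hyperelliptic and hence has a $g^1_2$, giving a $g^1_3$ by adding a base point, so $\bar X$ maps $3$-to-$1$ to $\PP^1$. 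For genus $3$ and $4$ one uses the standard Brill--Noether theory of trigonal and hyperelliptic curves (as in the use of Mumford's theorem in Theorem~\ref{2D-is-birational}) together with the degree-$6$ model to exhibit the required low-degree map, the remaining case being a smooth plane quintic-type obstruction that is excluded by the genus bound. The crux is verifying that no genuinely $3$-minimal curve survives, i.e. that in every surviving genus the curve already carries a geometric $g^1_3$ or degree $3$ elliptic map.
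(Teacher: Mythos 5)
Your reduction to the $3$-minimal case and your genus bound $g \leqslant 4$ from Theorem~\ref{thm:weak-genus-bound} (with $m=1$, $\varepsilon=2$) are exactly the paper's first two steps. The gap is in what you yourself flag as ``the crux.'' The paper finishes in one line: a curve of genus $g \leqslant 4$ has \emph{geometric} gonality at most $3$ (the standard bound $\gon \bar{X} \leqslant \lfloor (g+3)/2 \rfloor$; concretely, genus $2$ curves are hyperelliptic, genus $3$ curves are hyperelliptic or smooth plane quartics, and genus $4$ curves are hyperelliptic or carry a $g^1_3$ cut out by a ruling of the quadric containing the canonical model). Since the conclusion of the corollary is purely geometric, no examination of which curves ``genuinely arise,'' no degree-$6$ model, and no appeal to Mumford's theorem is needed; you defer precisely this finishing step to an unspecified Brill--Noether analysis, so as written the proof is incomplete.

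Two further problems with the sketch you do give. First, for genus $2$, adding a base point to the $g^1_2$ does \emph{not} yield a $3$-to-$1$ map: the morphism associated to a $g^1_3$ with a base point is still the degree-$2$ map. (A correct argument: a general degree-$3$ divisor on a genus-$2$ curve is nonspecial, so $h^0 = 2$, and is base point free; or simply quote the gonality bound.) Second, your stated goal of ``verifying that no genuinely $3$-minimal curve survives'' is conceptually off: $3$-minimal curves do exist in genus $3$ and $4$ (pointless smooth plane quartics and genus-$4$ Debarre--Fahlaoui curves, as in Theorem~\ref{main-low-degrees}); the point is that even these curves have geometric gonality $3$, which is exactly what the gonality fact delivers and what your sketch does not establish.
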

\begin{proof}
If $X$ is not $3$-minimal the conclusion holds, so we can assume $X$ is $3$-minimal. Then by Theorem \ref{thm:weak-genus-bound} the genus of $X$ is at most $4$. The geometric gonality of a curve of genus $g \leqslant 4$ is at most $3$.
\end{proof}

\subsection{Summary of Setup and Notation}\label{sec:construction-summary}

We give a brief summary of the basic structures and properties introduced in the previous section. We fix a $d$-minimal curve $X$, and let $A \subset W_d X$ be a corresponding abelian variety with dense \(k\)-points. For every $D \in A$ and every integer $n \geqslant 2$ we consider the linear system $|nD|$ and the corresponding projective embedding of $X$. Within the projective space $\PP^{|nD|}$, we look at the linear spaces of the form $\Span_{|nD|} E$ for all divisors $E \in A$. The resulting system of subspace configurations enjoys a number of unusual properties. We use $V=V_{|nD|}$ to denote the maximal subspace shared by all spaces $\Span_{|nD|} E$ for a Zariski open family of $E \in A$. Projecting from $V$ defines the linear system $|nD|'$ on $X$ equipped with a similar family of linear spaces $\Span_{|nD|'} E$. The basic properties of these structures are the following:

\begin{enumerate}
    \item The dimensions of $|nD|$ and $|nD|'$ have fixed values $r(n), r'(n)$ for a generic choice of $D \in A$;
    \item For general $D, E$ the dimensions of $\Span_{|nD|}E$ and $\Span_{|nD'|} E$ have constant values $s(n), s'(n)$;
    \item If \eqref{dagger} holds, and $n$ is such that $s(n) \leqslant d-2$, then for a general $D$ and a general pair $E_1, E_2$ the subspaces $\Span_{|nD|'}E_1$ and $\Span_{|nD|'}E_2$ have nonempty intersection; 
    \item The intersection of all subspaces $\Span_{|nD|'} E$ as $E$ varies over any Zariski open subset in $A$ is empty. 
    \item\label{atp} (The \defi{abelian translate property}) For any $D\in A$ and any divisors $E_1, ..., E_{n-1} \in A$ there exists a divisor $E_n \in A$ such that the subspaces $\Span_{|nD|'}E_i$ all belong to the same hyperplane;
    \item For general $D, E \in A$ the projection of $X$ in $|nD|$ from $\Span_{|nD|}E$ is equivalent to the embedding given by $|nD - E|$; in particular we have the identities $r(n)-s(n)=r'(n)-s'(n)=r(n-1) + 1.$
\end{enumerate}
The presence of these linear configurations allows us to give various restrictions on the geometry of the curve $X$. In the next section we will use this structure to identify the curves with $r(2)=2$ with the curves constructed by Debarre and Fahlaoui \cite{Debarre-Fahlaoui1993}. 

\section{Debarre--Fahlaoui curves}\label{sec:Debarre-Fahlaoui}

Let $A$ be a positive rank elliptic curve over $k$.
For all $d \geqslant 4$, Debarre and Fahlaoui give examples of $d$-minimal curves lying on the smooth surface $\Sym^2 A$.  We first recall their construction, and then we show that any \(d\)-minimal curve with \(r(2) = 2\) naturally arises in this way. This shows that the simplest class of $d$-minimal curves is the one provided by the Debarre--Fahlaoui construction.

We begin by recalling the setup from \cite[Section 4.1]{Debarre-Fahlaoui1993}.
The addition law on $A$ induces a natural map $\pi \colon \Sym^2 A \to A$.
Let $o \in A(k)$ be the origin, and let $\EE$ be the unique nonsplit extension
\[ 0 \to \O_A \to \EE \to \O_A(o) \to 0.\]
Then the fibration $\Sym^2 A \xrightarrow{\pi} A$ is isomorphic to $\pp \EE \to A$.  Let $H$ denote the relative $\O_{\pp \EE}(1)$.   Then we have 
\[\Pic(\Sym^2 A) \simeq \pi^*\Pic(A) \oplus \mathbb{Z} H.\]
We will write $F_x$ for the divisor $\pi^*\O_A(x)$; in terms of the moduli description of $\Sym^2 A$, this consists of all degree $2$ effective divisors on $A$ that sum to $x$ under the group law.
The divisors $F_x$ for all $x \in A$ are numerically equivalent, and we simply write $F$ for this numerical class. 
Another natural divisor on $\Sym^2 A$, which we denote $H_x$ consists of all effective divisors of degree $2$ on $A$ that contain $x$.  The rational equivalence class of this divisor is
\(H_x = H - F_o + F_x\) \cite[Section 4.1 (ii)]{Debarre-Fahlaoui1993}.
(In particular, $H_o = H$.)
The numerical classes of divisors are spanned by $H$ and $F$, with the following intersection relations:
\[H^2 = 1, \qquad H \cdot F = 1, \qquad F^2 = 0.\]
The canonical class $K$ on $\Sym^2 A$ has numerical class
\(K = -2H + F\).
The Nef and effective cones both consist of all classes $aH + bF$ where $a \geqslant 0$ and $a + 2b \geqslant 0$ \cite[Chapter V, Proposition 2.21]{Hartshorne}.

\begin{definition}
A \defi{Debarre--Fahlaoui curve} is a geometrically integral curve on $\Sym^2 A$ in numerical class
\[(d+m)H - mF,\]
for some $1 \leqslant m \leqslant d$.
\end{definition}

This terminology comes from the fact that Debarre and Fahlaoui consider the case $m=1$ in \cite{Debarre-Fahlaoui1993} to give counterexamples to the conjecture of Abramovich--Harris \cite[page 229]{Abramovich-Harris1991}.  Let us recall this construction.

Let $X$ be a Debarre--Fahlaoui curve.  The family of divisors $H_x$ on $\Sym^2 A$ restricts to a family of degree $d$ effective divisors on $X$ parameterized by $A$, since $H \cdot ((d+m)H - mF) = d$.  This family gives rise to an embedding
\[\psi \colon A \hookrightarrow W_dX.\]
This family of degree $d$ divisors is \emph{not} induced by a map $X \to A$: if $H_x \cdot X$ contains the degree $2$ effective divisor $[x + x']$, then so does $H_{x'} \cdot X$; as such these cannot be (the necessarily disjoint) fibers of a map.

\begin{proposition}[{\cite[Propositions 5.7 and 5.14]{Debarre-Fahlaoui1993}}]\label{prop:DFs-are-minimalish}
Let $d \geqslant 4$ and $1 \leqslant m \leqslant d$ be integers.
Consider the numerical class $(d+m)H - m F$.
\begin{enumerate}
    \item If $m < d/2$, then for any nice curve $X$ in this class, we have $\gon \bar{X} > d$.
    \item If the class of $X$ is very ample (e.g., if \(m=1\)), then a general curve in this class admits no nontrivial maps of degree at most $d$ to a non-isomorphic curve of genus at least \(1\). 
\end{enumerate}
In particular, under both of these assumptions, such a curve $X$ is $d$-minimal.
\end{proposition}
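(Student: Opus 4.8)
The plan is to establish the two numbered assertions separately and then observe that each of them, together with the general machinery of Setup \ref{main_setup}, forces $d$-minimality. Throughout I will use the intersection theory on $\Sym^2 A$ recalled above: the numerical classes are spanned by $H$ and $F$ with $H^2 = H\cdot F = 1$, $F^2 = 0$, and $K = -2H + F$; and the Nef and effective cones consist of $aH + bF$ with $a \geqslant 0$ and $a + 2b \geqslant 0$. I will write $C$ for the class $(d+m)H - mF$ of our curve $X$.

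For part (1), I would argue by contradiction: suppose $X$ has a $g^1_e$ with $e \leqslant d$, so there is a nonconstant map $X \to \pp^1$ whose fibers are degree-$e$ effective divisors on $X$. Each such fiber spans (inside $\Sym^2 A \hookrightarrow \pp\EE$) and moves in a pencil, so the key is to produce a numerical contradiction from the existence of a base-point-free pencil of small degree. The natural tool is to compute the self-intersection or the intersection with $H$ and $F$ of a moving divisor coming from the pencil and compare against $C$. Concretely, I expect to intersect $C$ with the classes $F$ and $H_x = H - F_o + F_x$ (note $C \cdot F = d + m$ and $C \cdot H = d$) and to bound the gonality below by the minimal degree of such a cover; the hypothesis $m < d/2$ should be exactly what makes $C \cdot F = d+m$ and $C\cdot H = d$ incompatible with a pencil of degree $\leqslant d$ unless the pencil is cut out by one of the two natural fibrations, which one then rules out. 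Since this is cited as \cite[Proposition 5.7]{Debarre-Fahlaoui1993}, I would lean on that computation; the essential point is that $\gon \bar X > d$.

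For part (2), the claim is a genericity statement: a general member of a very ample linear system admits no nontrivial degree-$\leqslant d$ map to a non-isomorphic curve of genus $\geqslant 1$. Here I would invoke Theorem \ref{thm:kani} (the de Franchis--Kani finiteness) to reduce to controlling finitely many possible targets, and then use a Lefschetz-type or Bertini-type argument on the very ample system to show that a general smooth member has Picard/correspondence structure forcing any such map to be one of the tautological ones on $\Sym^2 A$, which are either $\pp^1$-bundle projections or have genus-$0$ image. Again this is cited as \cite[Proposition 5.14]{Debarre-Fahlaoui1993}, so the substance is a reference; the content I must supply is verifying that very ampleness holds (e.g.\ that the case $m = 1$ falls under the cited hypothesis) and that the excluded maps indeed cover all ways a Debarre--Fahlaoui curve could fail to be $d$-minimal.

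Finally, to conclude $d$-minimality under either hypothesis, I would combine the appropriate part with the definition of $d$-minimal. Recall that $X$ is $d$-minimal if there is no cover $\pi \colon X \to Y$ of degree $\geqslant 2$ with $\airr_k Y \cdot \deg \pi = d$. Under hypothesis (1), since $\gon \bar X > d$, any such $Y$ would have to have genus $\geqslant 1$ (a degree-$\leqslant d$ map to $\pp^1$ is excluded by the gonality bound), and then the factoring cover would contradict part (2)'s exclusion or the structure of the family $\psi \colon A \hookrightarrow W_d X$; under hypothesis (2) the exclusion of degree-$\leqslant d$ maps to non-isomorphic positive-genus curves directly rules out the nontrivial covers, while maps to $\pp^1$ are handled by the first remark after the definition of Debarre--Fahlaoui curves, namely that the family of divisors $H_x \cdot X$ is genuinely two-parameter-like and not the fibers of a map $X \to A$. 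The main obstacle I anticipate is the last step: cleanly showing that \emph{every} potential nontrivial factorization $X \to Y \to (\text{point family})$ is excluded, since the family $\psi$ is explicitly \emph{not} induced by a map $X \to A$, so one must carefully separate the genus-$0$ targets (killed by gonality or the odd-degree/pointless arguments) from the positive-genus targets (killed by de Franchis--Kani plus part (2)).
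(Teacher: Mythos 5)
The paper supplies no argument of its own for this proposition: both numbered statements are imported directly from \cite[Propositions 5.7 and 5.14]{Debarre-Fahlaoui1993}, and the closing sentence is left as an immediate consequence. Your proposal is in the same spirit. Your sketches of how those cited proofs might go (intersection numbers against \(H\) and \(F\) for part (1), Kani-plus-Bertini for part (2)) are speculative and not what Debarre--Fahlaoui actually do, but this is immaterial since you ultimately defer to the citations, exactly as the paper does. The content you must supply is the deduction of \(d\)-minimality, and there your case division \emph{under both hypotheses} is the right one and matches what the paper leaves implicit: any cover \(\pi \colon X \to Y\) of degree \(\geqslant 2\) with \(\airr_k Y \cdot \deg \pi = d\) has \(\deg \pi \leqslant d\); if \(Y\) has genus \(0\) then \(\gon \bar{X} \leqslant \deg \pi \leqslant d\), contradicting part (1); if \(Y\) has genus \(\geqslant 1\) then \(Y \not\isom X\) (a curve of genus \(\geqslant 2\) has no self-maps of degree \(\geqslant 2\)), contradicting part (2) for a general \(X\).

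Two defects are worth naming. First, your suggestion that \emph{either} hypothesis alone yields \(d\)-minimality is wrong, and the specific mechanism you propose under hypothesis (2) --- invoking the remark that the family \(H_x \cdot X\) is not the family of fibers of a map \(X \to A\) --- cannot do the job: that remark rules out one particular map and says nothing about maps \(X \to \pp^1\) of degree \(\leqslant d\). Excluding a \(g^1_e\) with \(e \leqslant d\) is precisely what the hypothesis \(m < d/2\) of part (1) buys, and it is not a consequence of very ampleness. Since the proposition asserts \(d\)-minimality only under both assumptions, this overreach does not invalidate your proof of the stated result, but it should be excised. Second, \(d\)-minimality as defined in the paper (see the definition following Theorem \ref{2D-is-birational}) presupposes \(\airr_k X = d\), which your proposal never verifies: one needs \(A\) of positive rank so that \(\psi(A(k))\) produces infinitely many \(k\)-rational effective degree-\(d\) divisors (giving \(\airr_k X \leqslant d\)), and one must exclude infinitely many points of degree \(d' < d\), which by Mordell--Lang requires --- beyond \(\gon_k X \geqslant \gon \bar{X} > d\) from part (1) --- knowing that \(W_{d'}X\) contains no positive-dimensional abelian translate with dense rational points for \(d' < d\). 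This last geometric input must again be extracted from the cited propositions of Debarre--Fahlaoui; it does not follow from anything written in your proposal.
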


\medskip

We now turn to \(d\)-minimal curves with \(r(2) = 2\).
We will analyze the geometry of such curves by looking at the induced subspace configurations in $|3D|'$.  We begin by showing, in Lemma \ref{lem:rprime_3}, that this structure is a configuration of $2$-planes in a $5$-space; in other words \(r'(3) = 5\) (which forces \(s'(3)=2\)). 

\begin{lemma}\label{lem:linear-algebra}
Let $V_i \subset \PP^n, i \in I$ be a collection of codimension \(2\) subspaces of $\PP^n$ spanning all of \(\pp^n\). Suppose that for any $i, j \in I$, the subspaces $V_i, V_j$ belong to a common hyperplane. Then there is a codimension $3$ subspace $\Lambda$ that belongs to $V_i$ for all $i \in I$. 
\end{lemma}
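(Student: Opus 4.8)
The plan is to dualize, so that the codimension $2$ subspaces become lines and the hypotheses become clean incidence conditions. Write $\pp^n = \pp(W)$ for an $(n+1)$-dimensional vector space $W$, and let $W_i \subset W$ be the $(n-1)$-dimensional subspace with $V_i = \pp(W_i)$. Passing to the dual, set $U_i \colonequals W_i^{\perp} \subset W^*$, a $2$-dimensional subspace, and let $\ell_i \colonequals \pp(U_i)$ be the corresponding line in $\pp(W^*)$. Under this dictionary the two hypotheses translate as follows. The spanning hypothesis $\sum_i W_i = W$ is equivalent to $\bigcap_i U_i = 0$, i.e. the lines $\ell_i$ have \emph{no common point}. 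The hypothesis that $V_i$ and $V_j$ lie in a common hyperplane says $\dim(W_i + W_j) \leqslant n$, which via $(W_i + W_j)^{\perp} = U_i \cap U_j$ is equivalent to $\dim(U_i \cap U_j) \geqslant 1$, i.e. the lines $\ell_i$ and $\ell_j$ \emph{meet}. Finally, the desired conclusion dualizes too: a codimension $3$ subspace $\Lambda = \pp(M^{\perp})$ contained in every $V_i$ corresponds to a $3$-dimensional $M \subset W^*$ containing every $U_i$, i.e. to a \emph{plane} in $\pp(W^*)$ containing every $\ell_i$.

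It therefore suffices to prove the classical fact that a family of pairwise-intersecting lines in projective space with no common point lies in a single plane. First I would fix two distinct lines $\ell_1 \neq \ell_2$ (these exist, since otherwise all lines coincide and share a point), and set $p \colonequals \ell_1 \cap \ell_2$, a single point. Because the family has no common point, some line $\ell_3$ avoids $p$. Let $\Pi$ be the plane spanned by $\ell_1$ and $\ell_2$. Now $\ell_3$ meets $\ell_1$ and $\ell_2$ in points $a, b \neq p$; if $a = b$ then $a \in \ell_1 \cap \ell_2 = \{p\}$, a contradiction, so $a$ and $b$ are distinct points of $\Pi$ lying on $\ell_3$, whence $\ell_3 \subset \Pi$.

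To finish I would show that every line $\ell_i$ of the family lies in $\Pi$. Such an $\ell_i$ meets each of $\ell_1, \ell_2, \ell_3 \subset \Pi$ in points $a_1, a_2, a_3 \in \Pi$. If all three coincided at a single point $q$, then $q \in \ell_1 \cap \ell_2 = \{p\}$ would give $q = p$, contradicting $q = a_3 \in \ell_3$ together with $p \notin \ell_3$. Hence at least two of the $a_j$ are distinct, so $\ell_i$ contains two distinct points of $\Pi$ and thus $\ell_i \subset \Pi$. Translating back through the dictionary above, the plane $\Pi$ yields a $3$-dimensional $M \subset W^*$ with $U_i \subset M$ for all $i$, and $\Lambda \colonequals \pp(M^{\perp})$ is then a codimension $3$ subspace with $\Lambda \subset V_i$ for every $i$, as required.

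I expect the only genuine subtlety to be the case analysis in the classical lemma—specifically, ruling out lines through the distinguished point $p$ that might a priori escape $\Pi$; the role of the auxiliary line $\ell_3 \not\ni p$ is precisely to close off this possibility, and this is why the no-common-point hypothesis is essential. The dualization itself is routine linear algebra, the one point meriting care being the translation of ``common hyperplane'' into $\dim(U_i \cap U_j) \geqslant 1$ and the degenerate low-dimensional cases (e.g. $n = 2$, where $\Lambda$ is empty), which the argument handles uniformly.
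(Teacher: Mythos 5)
Your proof is correct, and it is a genuine reformulation of the paper's argument rather than a copy of it: the paper never dualizes, but works directly with the codimension~$2$ subspaces. There, one fixes two distinct $V_i, V_j$ (their span is a hyperplane, so $V_i \cap V_j$ has codimension $3$), uses the spanning hypothesis to choose $V_k \not\subset \Span(V_i, V_j)$, shows by a dimension count that $V_k \supset V_i \cap V_j$, and then repeats the argument to conclude that every member of the family contains $\Lambda = V_i \cap V_j$. Under your dictionary the two proofs correspond step for step: $\Span(V_i,V_j)$ dualizes to the point $p = \ell_1 \cap \ell_2$, the choice of $V_k$ outside that span is exactly your choice of $\ell_3$ avoiding $p$, the containment $V_k \supset V_i \cap V_j$ is your $\ell_3 \subset \Pi$, and the final step handling an arbitrary member matches as well. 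What your route buys is recognizability and transparency: the hypotheses become the classical fact that pairwise-meeting lines with no common point are coplanar, and the incidence checks (a line through two distinct points of a plane lies in the plane) are easier to verify at a glance than the codimension bookkeeping in the primal space; you also handle the degenerate case $n=2$ explicitly, which the paper leaves implicit. What it costs is the setup of the dualization dictionary itself, which the paper's direct argument avoids, making it marginally shorter.
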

\begin{proof}
Since $V_i$ have codimension $2$, if $V_i, V_j$ are two distinct subspaces, then $\dim \Span (V_i, V_j)=n-1$, and so $\dim V_i \cap V_j = n-3$. Choose a subspace $V_k$ such that $V_k$ does not belong to $\Span (V_i, V_j)$. Then $\dim V_k \cap \Span (V_i, V_j) = n-3$, and on the other hand $V_k$ intersects each of $V_i$ and $V_j$ in a subspace of dimension $n-3$. Therefore $V_k$ contains $V_i \cap V_j$. Finally take any subspace $V_\ell$, $\ell \neq i,j,k$. Then $V_\ell$ does not belong to $\Span (V_u, V_w)$ for some $u,w \in \{i,j,k\}$ and by the previous argument $V_\ell$ contains $V_u \cap V_w = V_i \cap V_j$. Thus $\Lambda=V_i \cap V_j$ satisfies the conclusion of the lemma.
\end{proof}

\begin{lemma}\label{lem:no-extra-intersecions}
Let \(X\) be a \(d\)-minimal curve with \(r(2) = 2\).
Suppose that $P \in X$ is a general point, and $D \in A$ is general. Then there exists a pair of distinct divisors $D_1, D_2$  from $A$ through $P$ such that $\Span_{|3D|'}D_1$ and $\Span_{|3D|'}D_2$ span a hyperplane in $|3D|'$.
\end{lemma}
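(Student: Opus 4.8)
The plan is to produce the pair $D_1, D_2$ from condition \eqref{dagger} and then pin down $\dim\left(\Span_{|3D|'}D_1 \cap \Span_{|3D|'}D_2\right)$ by a single projection, in the style of the proof of Proposition \ref{extra-intersections}. Since $r(2) = 2$, condition \eqref{dagger} holds by Lemma \ref{lemma:two_divisors_one_point}, so for general $P$ there is a pair $D_1, D_2 \in A$ with $D_1 \cap D_2 = \{P\}$; because $P$ is general we may take $D_1$, and hence $D_2$, to be general as individual divisors, so that $\dim \Span_{|3D|'}D_i = s'(3)$ and Lemma \ref{lem:spans_meets_divisor} applies to each $D_i$ against an independently general $D$. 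Recall from property (6) of Section \ref{sec:construction-summary} that $r'(3) - s'(3) = r(2) + 1 = 3$, hence $r'(3) = s'(3) + 3$, and from Corollary \ref{cor:s(n)-grows} that $s'(3) \geq 2$. Spanning a hyperplane is then equivalent to the numerical identity $\dim\left(\Span_{|3D|'}D_1 \cap \Span_{|3D|'}D_2\right) = s'(3) - 2$.

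To establish this identity I would project $|3D|'$ from $L_1 \colonequals \Span_{|3D|'}D_1$ and follow the image of $L_2 \colonequals \Span_{|3D|'}D_2$. By Lemma \ref{lem:spans_meets_divisor} (applied to the individually general $D_1$ and independently general $D$), $X \cap \Span_{|3D|}D_1 = D_1$, so the only point of $D_2$ lying in the center $L_1$ is the shared point $P = D_1 \cap D_2$; consequently the projection of $L_2$ is the span of the images of $D_2 \setminus \{P\}$. By the projection identity (property (6)), projecting $X$ from $L_1$ realizes the linear system $|3D - D_1|$, and since $3D - D_1 \sim D + (2D - D_1)$ is a general point of $A^{(2)}$, this is the $r(2) = 2$ system, a plane $\PP^2$ (with $V_{|3D-D_1|}$ empty by Lemma \ref{elliptic-case}). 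In this plane $\Span_{|3D-D_1|}(D_2 \setminus \{P\}) = \Span_{|3D-D_1|}D_2$ is a line of dimension $s(2) = 1$, since the $d-1 \geq 2$ distinct points of $D_2 \setminus \{P\}$ still span it (Lemma \ref{transitive-monodromy}, applicable as $s(2) = 1 \leq d-2$). Hence the image of $L_2$ has dimension $1$.

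Comparing this with the projection-dimension formula $\dim \pi(L_2) = \dim L_2 - \dim(L_1 \cap L_2) - 1 = s'(3) - \dim(L_1 \cap L_2) - 1$ yields $\dim(L_1 \cap L_2) = s'(3) - 2$, and therefore $\dim \Span(L_1, L_2) = 2s'(3) - (s'(3)-2) = s'(3) + 2 = r'(3) - 1$, a hyperplane. As a sanity check, when $s'(3) = 2$ the two planes meet precisely in $P$ and span a $\PP^4 \subset \PP^5$, which matches the picture $r'(3) = 5$ targeted by the surrounding discussion.

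The main obstacle is managing the genericity rather than the linear algebra: the pair $D_1, D_2$ is forced to share $P$ and so is \emph{not} a general pair, yet the argument needs each $D_i$ individually general (for the span dimensions and for Lemma \ref{lem:spans_meets_divisor}) and needs $3D - D_1$ to land in the open locus of $A^{(2)}$ on which $\dim|3D - D_1| = r(2)$. I would handle this exactly as in Proposition \ref{extra-intersections}: first fix a general $P$, which makes $D_1, D_2$ individually general, and only then choose $D$ independently general. The one genuinely essential consequence of \eqref{dagger} is that $D_1 \cap D_2$ is the single point $P$; this is what forces projection from $L_1$ to kill exactly one point of $D_2$ and thereby pins the image dimension to $s(2) = 1$. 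Once these generality choices are secured, the remaining computation is routine.
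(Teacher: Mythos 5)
Your proof is correct and takes essentially the same route as the paper's: both produce $D_1, D_2$ meeting only at $P$ via Lemma \ref{lemma:two_divisors_one_point}, project from $\Span_{|3D|'}D_1$ onto the plane given by $|3D-D_1|$, and conclude by showing the image of $D_2 \setminus \{P\}$ spans a line rather than a single point. The only cosmetic difference is in that last step, where the paper uses that $|3D-D_1|=|2E|$ is an embedding along $D_2$ while you invoke Lemma \ref{transitive-monodromy}; both are valid.
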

\begin{proof}
By Lemma \ref{two-divisors-through-a-point} there exists a pair of distinct divisors $D_1, D_2$ through a general point of $X$, and moreover each $D_i$ individually is general in $A$. Choose such a pair.
Recall that since \(r(2)=2\) and \(r'(3) - r(2) = s'(3) + 1\), the spaces $\Span_{|3D|'}D_i$ have codimension \(3\) in \(|3D|'\).  By Lemma \ref{lemma:two_divisors_one_point}, we may assume that \(D_1\) and \(D_2\) meet only at \(P\). 

Consider the divisor $E \in A$ such that $3D-D_1=2E$; such $E$ is a general point of $A$, and in particular $|2E|$ is an embedding along each $D_i$. Note that $\Span_{|3D|'}(D_1+D_2)$ has codimension either \(1\) or \(2\).
If $\Span_{|3D|'}(D_1+D_2)$ has codimension \(2\), then the projection $\pi$ from $\Span_{|3D|'}D_1$ sends the points of $D_2 \smallsetminus D_1$ to the same point in $|2E|$. The set $D_2 \smallsetminus D_1$ contains at least $d-1$ points, contradicting the assumption that $|2E|$ is an embedding along \(D_2\).
\end{proof}

\begin{lemma}\label{lem:rprime_3}
Suppose $r(2)=2$. Then $r'(3)=5$.
\end{lemma}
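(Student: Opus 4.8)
The plan is to reduce the statement to a single numerical quantity. The identity $r'(n)-s'(n)=r(n-1)+1$ recorded in Section~\ref{sec:construction-summary}, together with $r(2)=2$, gives $r'(3)=s'(3)+3$, so proving $r'(3)=5$ is the same as proving $s'(3)=2$. Throughout I would use two immediate consequences of the hypothesis: since $X$ is $d$-minimal it has no degree $d$ map to $\pp^1$, so a general span $\Span_{2D}D'$ is a hyperplane in $|2D|=\pp^2$ and therefore $s(2)=r(2)-1=1$; and since $r(2)=2$, Lemma~\ref{lemma:two_divisors_one_point} shows that condition \eqref{dagger} holds, so every result of Section~\ref{sec:subspace-configurations} proved under \eqref{dagger} is available.

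The lower bound is immediate: Corollary~\ref{cor:s(n)-grows} gives, under \eqref{dagger}, the inequality $s'(3)\geq \min(s(2)+1,\,d-1)=\min(2,d-1)=2$, using $s(2)=1$ and $d\geq 4$. Hence $r'(3)\geq 5$, and the entire content of the lemma is the reverse bound $s'(3)\leq 2$. I would establish this by contradiction, assuming $s'(3)\geq 3$. Fix a general point $P\in X$ and, by Lemma~\ref{lemma:two_divisors_one_point}, a pair $D_1,D_2\in A$ meeting only at $P$; by Lemma~\ref{lem:no-extra-intersecions} I may take them so that $\Span_{|3D|'}D_1$ and $\Span_{|3D|'}D_2$ span a hyperplane $H$ of $|3D|'$. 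Since each span has codimension $r'(3)-s'(3)=3$, spanning a hyperplane forces the minimal intersection $\dim\left(\Span_{|3D|'}D_1\cap\Span_{|3D|'}D_2\right)=s'(3)-2$, which is positive under our assumption; moreover Lemma~\ref{lem:spans_meets_divisor} shows this intersection meets $X$ only in $D_1\cap D_2=\{P\}$.

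The crux, and where I expect the real difficulty, is to convert this positive-dimensional intersection into a contradiction. Inside the hyperplane $H\cong\pp^{r'(3)-1}$ the two spans have codimension $2$, which is exactly the setting of the linear-algebra Lemma~\ref{lem:linear-algebra}; the abelian translate property (item (5) of Section~\ref{sec:construction-summary}) moreover produces, for any two divisors whose spans span $H$, a third divisor whose span again lies in $H$, since $H$ is the unique hyperplane containing $\Span_{|3D|'}D_1\cup\Span_{|3D|'}D_2$. My plan is to organize the spans lying in $H$ into a configuration of codimension-$2$ subspaces satisfying the hypotheses of Lemma~\ref{lem:linear-algebra} (spanning $H$, and pairwise contained in a common hyperplane of $H$), extract the resulting common codimension-$3$ subspace of $H$, and contradict the emptiness of $\bigcap_{D'}\Span_{|3D|'}D'$ guaranteed by item (4) of Section~\ref{sec:construction-summary}. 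The delicate point is precisely the verification of these hypotheses: the very pair $D_1,D_2$ used to define $H$ spans $H$ and so cannot itself be placed in the configuration, so I would need to exhibit enough \emph{other} divisors with spans in $H$ whose pairwise spans are proper in $H$. Managing this sub-configuration, rather than the (routine) dimension bookkeeping, is the essential obstacle, and it is where the hypothesis $r(2)=2$ is used decisively, since it is what drops the codimension of the spans from $3$ in $|3D|'$ to $2$ inside $H$.
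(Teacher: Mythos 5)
Your lower bound is fine and agrees with the paper: $r(2)=2$ gives \eqref{dagger} via Lemma \ref{lemma:two_divisors_one_point}, and then Corollary \ref{cor:s(n)-grows} (equivalently Theorem \ref{thm:nD_lower_bound}) yields $s'(3)\geqslant 2$, i.e.\ $r'(3)\geqslant 5$. But the entire content of the lemma is the upper bound, and there your proposal stops at a plan that you yourself flag as unresolved --- and that plan, as stated, cannot be completed. You want to build a configuration, satisfying the hypotheses of Lemma \ref{lem:linear-algebra}, out of divisor spans \emph{contained in} the hyperplane $H=\Span(D_1+D_2)$. The obstruction is that such spans essentially do not exist beyond the ones you started with: $H$ cuts $X$ in the divisor $D_1+D_2+D_3$ with $D_3=3D-D_1-D_2$, so any $E'\in A$ with $\Span_{|3D|'}E'\subset H$ satisfies $E'\leqslant D_1+D_2+D_3$; by \eqref{eq:disjoint_support} and Lemma \ref{lem:spans_meets_divisor} the only such divisors of $A$ (generically) are $D_1$, $D_2$, $D_3$ themselves, and in any case there are only finitely many candidates supported on this fixed finite set of points. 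Two of these three span $H$, so no collection satisfying the pairwise-common-hyperplane hypothesis can be extracted. Worse, even if a handful of such spans shared a common subspace, this would not contradict property (4) of Section \ref{sec:construction-summary}: that property concerns the intersection of spans over a Zariski-\emph{dense} family of divisors, not over the finitely many divisors subordinate to one hyperplane.

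The paper's proof uses the same two tools you identified (Lemma \ref{lem:linear-algebra} and the emptiness property (4)) but in a different ambient space, and this is exactly the missing idea: instead of hunting for spans inside a fixed hyperplane, fix one general divisor $E$ and intersect the \emph{whole generic family} of spans with $\Span E$. Assuming $r'(3)\geqslant 6$, the subspaces $W_i=\Span D_i\cap\Span E$ for general $D_1,\dots,D_N$ have codimension $2$ in $\Span E$ (two general spans span a hyperplane of $|3D|'$, by Lemma \ref{lem:no-extra-intersecions} together with the abelian translate property, so each intersection has codimension $5$ in $|3D|'$), and any two of them lie in the common hyperplane $\Span(D_i+D_j)\cap\Span E$ of $\Span E$. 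This produces arbitrarily many subspaces in the configuration, and property (4) legitimately applies: for $N$ large the $W_i$ have empty common intersection, so Lemma \ref{lem:linear-algebra} forces all $W_i$ into a single hyperplane $\Lambda\subset\Span E$, hence (by closedness) every $D'$ whose span meets $\Span E$ in codimension $2$ has $\Span D'\cap\Span E\subset\Lambda$. Applying Lemma \ref{lem:no-extra-intersecions} at a point $P$ of $E$ outside $\Lambda$ then gives the contradiction. Note also that the hypothesis $r'(3)\geqslant 6$ is what makes the $W_i$ positive-dimensional, so that the trichotomy behind Lemma \ref{lem:linear-algebra} has content; in your formulation this quantitative role of the contradiction hypothesis never enters, which is a further sign the plan had not yet engaged with the actual mechanism of the proof.
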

\begin{proof}
We will consider the configuration of divisor spans in \(|3D|'\).
Since \(r(2) = 2\), we have $s'(3)=r'(3)-r(2)-1=r'(3)-3$.  Since two general divisor spans span a hyperplane by Lemma \ref{lem:no-extra-intersecions}, their intersection has codimension \(5\).
Furthermore, $r'(3) \geqslant 5$ by Lemma \ref{lemma:two_divisors_one_point} and Theorem \ref{thm:nD_lower_bound}. Suppose $r'(3) \geqslant 6. $   

Let \(E\) be a general divisor in \(A\).  Consider a general collection of divisors \(D_1, \dots, D_N\) in \(A\).  The spaces $W_{i} \colonequals \Span D_i \cap \Span E$
form a collection of distinct codimension \(2\) subspaces of \(\Span E\), any two of which span a hyperplane \(\Span(D_i + D_j) \cap \Span E\).  For \(N\) large enough, the intersection of all \(W_{i}\) is empty (since we are working in \(|3D|'\)).  Thus by Lemma \ref{lem:linear-algebra}, there is a hyperplane $\Lambda$ in \(\Span E \) containing all \(W_i\).  Since $N$ was arbitrarily large, for a general $D' \in A$ we have $\Span D' \cap \Span E \subset \Lambda.$ Being contained in $\Lambda$ is a closed condition; therefore every divisor $D'$ for which the codimension of $\Span D'\cap \Span E$ in $\Span E$ is $2$ satisfies $\Span D' \cap \Span E \subset \Lambda$. This contradicts Lemma \ref{lem:no-extra-intersecions} 
applied to $D$, $D_1=E$ and a point $P$ of $E$ outside $\Lambda$.
\end{proof}

We will relate curves with $r(2)=2$ to Debarre--Fahlaoui curves as follows. If $r'(3)=5$, then the resulting configuration of divisor spans in \(\pp^5\) is a family of $2$-planes, parametrized by $A$, pairwise sharing points. This will naturally gives rise to a rational map $\psi\colon \Sym^2 A \to \PP^5$ sending a pair of divisors to the intersection of their spans. Since there are at least two divisors from $A$ through every point on $X$ we expect $X$ to be in $\psi(\Sym^2 A)$. In this way $X$ ``wants to be'' a curve on $\Sym^2 A$.

To realize this idea, we first need to reduce to the case $\dim A = 1$ (in Lemma \ref{lem:dimg1}) and establish a  nondegeneracy property of our configuration (Lemma \ref{lem:generically-not-in-hyperplane}).

\begin{lemma}\label{lem:dimg1}
Suppose that \(X\) is \(d\)-minimal and \(\dim A > 1\).  Then \(r(2) > 2\).
\end{lemma}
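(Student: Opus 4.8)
The plan is to reduce to the boundary case $\dim A = 2$ and then derive a contradiction from a uniform-position (monodromy) argument on the plane model of $X$. Since $X$ is $d$-minimal it is in particular not a degree $d$ cover of an elliptic curve, so Proposition \ref{prop:elliptic-exceptions} gives $r(2) = \dim|2D| \geq \max(2,\dim A)$. When $\dim A \geq 3$ this already yields $r(2) \geq 3 > 2$, so the only case that requires work is $\dim A = 2$; I would assume $\dim A = 2$ and, for contradiction, that $r(2) = 2$. Note that $\dim A \leq \dim W_dX \leq d$ forces $d \geq 2$, and $d \neq 2$ by Theorem \ref{thm:no_2_min}, so in fact $d \geq 3$.

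Next I would set up the plane model. By Theorem \ref{2D-is-birational} the linear system $|2D|$ maps $X$ birationally onto a nondegenerate plane curve $C \subset \PP^2$ of degree $2d$. Since $\gon_k X > d$ forces $r(1) = \dim|D| = 0$, the projection identity $r(2)-s(2) = r(1)+1$ gives $s(2)=1$, so each divisor span $\Span_{2D}E$ for $E \in A$ is a line $\ell_E$. By the abelian translate property of Section \ref{sec:abelian-translate} together with the $n=2$ case of Lemma \ref{lem:spans_meets_divisor}, one has $\ell_E = \Span_{2D}(2D-E)$ and $\ell_E \cap C = E + (2D - E)$. This yields a map $\beta \colon A \to (\PP^2)^{\vee}$, $E \mapsto \ell_E$, which I would first show is dominant: otherwise a general fiber of $\beta$ is positive-dimensional, producing a positive-dimensional family of divisors $E \in A$ all supported on the fixed finite set $\ell \cap C$, which is impossible.

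With $\beta$ dominant, a general line section $\Gamma \colonequals \ell \cap C$ admits a splitting $\Gamma = E + E'$ with $[E], [E'] \in A$. The crux of the argument is then a monodromy step: by the uniform position principle the monodromy of the general line section of the nondegenerate plane curve $C$ is the full symmetric group $S_{2d}$, and this monodromy must permute among themselves the (algebraically defined, nonempty) collection of $d$-subsets $E \subseteq \Gamma$ with $[E] \in A$. Since $S_{2d}$ acts transitively on $d$-subsets, \emph{every} $d$-subset of $\Gamma$ has class in $A$. Swapping a single point between two such subsets then shows $[p - q] \in A^0$ for a general pair $p, q \in C$, hence for all pairs by a closedness argument; therefore the Abel--Jacobi image of $X$ lies in $A^0$, which forces $J(X) = A^0$ and $g = \dim A^0 = 2$.

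Finally, $g = 2$ makes $X$ hyperelliptic, so $\gon_k X = 2 \leq d$, contradicting $\gon_k X > d$ in Setup \ref{main_setup}; this contradiction establishes $r(2) > 2$. I expect the main obstacle to be the monodromy step, specifically justifying that the $A$-special $d$-subsets form a single $S_{2d}$-orbit: one must argue that membership in the fixed subvariety $A \subset \Pic^d X$ cuts out a sub-cover over the space of lines that the full uniform-position monodromy necessarily respects, so that a nonempty invariant collection of $d$-subsets must be all of them.
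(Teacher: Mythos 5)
Your proof is correct, and its first half coincides with the paper's own: the reduction to the case $\dim A = 2$, $r(2)=2$ via Proposition \ref{prop:elliptic-exceptions}, the degree-$2d$ birational plane model from Theorem \ref{2D-is-birational}, the dominance of $E \mapsto \Span_{2D}E$, and the uniform-position step showing that \emph{every} $d$-element subset of a general line section has class in $A$ are exactly the paper's moves (the paper runs this monodromy step only for $d \geqslant 4$, but, as you implicitly exploit, nothing in it requires that). Where you genuinely diverge is in extracting the contradiction. The paper, for $d \geqslant 4$, picks two $A$-divisors $E, E'$ in a common section with $\#(E \cap E') \geqslant 2$ and $\#(E' \smallsetminus E) \geqslant 2$ (which is why it needs $d \geqslant 4$) and derives a contradiction from the configuration of $2$-planes in $|2D+E|'$ supplied by Lemma \ref{lem:rprime_3}; the case $d=3$ then gets a separate, fairly involved argument using two families of pairwise-incident lines forced onto a quadric and a bidegree/monodromy contradiction. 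You instead swap a single point: $[q-p]=[E']-[E] \in A^0$ for any two points of a general section, and since the secant map $C \times C \dashrightarrow (\PP^2)^\vee$ has finite fibers (hence is dominant, so a general pair of points of $C$ spans a general line), all differences of points of $X$ lie in the abelian subvariety $A^0$; as such differences generate $\Pic^0 X$, this forces $\Pic^0X = A^0$, so $g = \dim A = 2$, whence $\gon_k X = 2 \leqslant d$, contradicting Setup \ref{main_setup}. This buys you a single argument uniform in $d \geqslant 3$ (no special case for $d=3$) that is independent of the configuration lemmas (Lemmas \ref{lem:no-extra-intersecions} and \ref{lem:rprime_3}); everything you cite precedes Lemma \ref{lem:dimg1}, so there is no circularity. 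The two points you should spell out in a final write-up are the ones you partly flag yourself: (i) the locus of $A$-subsets is a closed, dominating sublocus of the étale cover of $d$-subsets of line sections, which is irreducible by transitivity of the $S_{2d}$-monodromy, hence the two coincide; and (ii) the secant-dominance fact above, which is what converts ``pairs in a general section'' into ``general pairs on $X$.'' Both are easy and correct as sketched. (One shared pedantic point: both you and the paper invoke Proposition \ref{prop:elliptic-exceptions} under $d$-minimality alone, though its statement nominally excludes degree-$d$ covers of arbitrary elliptic curves; this is harmless, since for $\dim A \geqslant 2$ the relevant part of its proof never uses that hypothesis.)
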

\begin{proof}
By Proposition \ref{prop:elliptic-exceptions} we have $r(2) \geqslant \dim A$. Thus it suffices to show that the case $\dim A = r(2)=2$ does not occur. We consider the cases $d=3$ and $d \geqslant 4$ separately.

Suppose $d \geqslant 4$. Choose a general divisor $D\in A$ and consider the rational map $\phi: A \to (\PP^2)^\vee$ that sends a divisor class $E$ to the line $\Span_{|2D|}E$. 

The set of $2d$ points on a general linear section of $X \subset \PP^2$ is thereby equipped with a nonempty collection of $d$-element subsets coming from $A$. But the monodromy of the linear section is the symmetric group (see, for example, \cite[Lemma, Chapter III, page 111]{ACGH}), and so every $d$-element subset of a general linear section of $X$ is a divisor from $A$. Therefore, for a general divisor $E \in A$, there exists another divisor $E' \in A$ such that both $E \cap E'$ and $E' \smallsetminus E$ consist of at least two points. Choose a general $E$ and consider the linear system $|2D+E|'$. By  Lemma \ref{lem:rprime_3}, we have $\dim |2D+E|' = 5$ and $\dim \Span_{|2D+E|'}E = \dim \Span_{|2D+E|'} E' = r'(3)-r(2)-1=2$. Since $E$ and $E'$ share at least two points, the $2$-planes $\Span_{|2D+E|'}E$ and $\Span_{|2D+E|'}E'$ share a line $\ell$. Therefore the projection of $\Span_{|2D+E|'}E'$ from $E$ is a single point. Therefore all the points of $E' \smallsetminus E$ are mapped to the same point under $2D$, which is a contradiction.

Suppose now that $d=3$. Consider the linear system $|3D|'$ and the associated embedding of $X$ in $\PP^5$. Consider a general point $P \in X$ and a general pair of divisors $D_P, D_P'$ through $P$. Such a pair does not share any points on $X$ except for $P$ by Lemma \ref{lemma:two_divisors_one_point}. By Lemma \ref{lem:no-extra-intersecions}  we have
\[\dim \Span_{|3D|'} D_P \cap \Span_{|3D|'} D_P' = 0.\] 

Choose a pair of general points $P,Q \in X$. Since the pair is general, projection from the line $\ell=PQ$ realizes $X$ as a degree $7$ curve in $\PP^3$. The projection from $\ell$ maps the divisor spans that contain $P$ or $Q$ to lines in $\PP^3$. Since $P,Q$ are a general pair of points, a general divisor $D_P$ containing $P$ and a general divisor $D_Q$ containing $Q$ form a general pair of divisors (as $P,Q$ vary), and so $\Span_{|3D|'} D_P$ and $\Span_{|3D|'} D_Q$ intersect at a point. By the above description of generic intersections, we can choose an infinite collection of divisors $D_{P}^{1}, D_P^2, \dots$ and $D_Q^{1}, D_Q^{2}, \dots$ such that the projections $\ell_i=\pi_\ell(\Span_{|3D|'}D_P^i)$ and $\ell_i'=\pi_{\ell}(\Span_{|3D|'}D_Q^i)$ form two families of lines with lines in each family pairwise skew, and $\ell_i \cap \ell_j' \neq \emptyset$ for all $i,j$. Such a pair of families is always contained in a smooth quadric.  Since every line $\ell_i, \ell_i'$ contains points from $\pi_\ell(X)$, the curve $\pi_\ell(X)$ shares infinitely many points with the quadric, and so belongs to the quadric. Therefore projection from $PQ$ realizes $X$ as a degree $7$, $(e_1, e_2)$-curve on the quadric. As $(P,Q)$ varies, the value of $(e_1, e_2)$ achieves a generic value on an open subset of $\Sym^2 X$; by monodromy, for this generic value $e_1=e_2$. However, $X$ has degree $7=e_1+e_2$, contradiction.
\end{proof}

\begin{lemma}\label{lem:generically-not-in-hyperplane}
Suppose we are in Setup \ref{main_setup}, $X$ is $d$-minimal, and $r(2)=2$. Consider a general triple of divisors $D, D_1, D_2 \in A$ and let $D_3=3D-D_1-D_2$. Then \[\bigcap_i \Span_{|3D|'} D_i = \emptyset.\]
\end{lemma}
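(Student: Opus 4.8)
\emph{Setup and reduction.} By Lemma \ref{lem:dimg1} we have $\dim A = 1$, so $A$ is an elliptic curve; by Lemma \ref{lem:rprime_3} we have $r'(3)=5$ and $s'(3)=2$; and \eqref{dagger} holds by Lemma \ref{lemma:two_divisors_one_point}. Thus in $|3D|' \simeq \PP^5$ each $\Span_{|3D|'}E$ is a $2$-plane, the abelian translate property places $\Pi_i \colonequals \Span_{|3D|'}D_i$ ($i=1,2,3$) in a common hyperplane $H'\simeq \PP^4$, and any two general divisor spans meet in a single point and span $H'$ (proof of Lemma \ref{lem:rprime_3}). The plan is to argue by contradiction: I will suppose that for a general triple the common intersection is a point $p$, and first note that $p\notin X$, since $p\in \Span D_1\cap \Span D_2$ while $X\cap \Span D_i=D_i$ by Lemma \ref{lem:spans_meets_divisor} and $D_1\cap D_2=\emptyset$.

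\emph{The involution argument.} The key elementary input is that three divisor spans \emph{without} a sum relation are in general position. Indeed, if $E'$ is disjoint from $D_1,E$ and $\Span E'\subset H'':=\Span(\Span D_1,\Span E)$, then $E'$ lies in the hyperplane section $X\cap H''=D_1+E+G$, where $G$ is the residual $d$-point divisor with $D_1+E+G\sim 3D$; disjointness forces $E'=G$ and hence $D_1+E+E'\sim 3D$. So if $D_1+E+E'\not\sim 3D$ then $\Span E'\not\subset H''$, the three spans fill $\PP^5$, and their triple intersection is empty. Now fix general $D,D_1$ and consider $\gamma\colon A\dashrightarrow \Pi_1$, $E\mapsto \Span D_1\cap \Span E$, together with the involution $\iota(E)=3D-D_1-E$ of $A$ (four fixed points, quotient $A/\iota\simeq \PP^1$). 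The equality $\gamma(E)=\gamma(\iota E)$ is precisely the common point $p$ of the sum-relation triple $(D_1,E,\iota E)$, so the contradiction hypothesis forces $\gamma=\gamma\circ\iota$; by the general-position fact these are the only coincidences of $\gamma$, so $\gamma$ is generically two-to-one onto its image $C\colonequals \overline{\gamma(A)}\subset \Pi_1\simeq \PP^2$, which is therefore a \emph{rational} curve.

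\emph{The crux and the obstacle.} It remains to contradict the rationality of $C$, i.e.\ to show $\gamma$ is generically injective, so that $C\simeq A$ has genus one. This is the geometric heart of the argument and the step I expect to be the main obstacle; everything before it is linear algebra and bookkeeping. I would analyze the $1$-parameter family $\{\Span E\}_{E\in A}$ cutting the fixed plane $\Pi_1$ by projecting $|3D|'$ from the common point $p$: the three planes $\Pi_i$ become three pairwise \emph{skew} lines $\ell_1,\ell_2,\ell_3$ lying on a unique smooth quadric in $\overline{H'}\simeq \PP^3$, so every sum-relation hyperplane section $D_1+D_2+D_3$ lies on a quadric. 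Since each curve $C=C_{D_1}$ passes through the $d$ points of $D_1$ (for $q\in D_1$, a general $E\ni q$ with $E\cap D_1=\{q\}$ has $\gamma(E)=q$, using \eqref{dagger}) and $\bigcup_{D_1\in A}D_1=X$, these rational curves sweep a \emph{ruled} surface containing $X$; the ruling then exhibits $X$ as a cover of a rational base, and tracing the induced family of divisors as in Lemma \ref{lem:ramification_argument} would force $X$ to be a degree $d$ cover of the elliptic curve $A$, contradicting $d$-minimality. Equivalently, in the genuine Debarre--Fahlaoui situation the three classes $D_i+D_j$ are \emph{distinct} points of $\Sym^2A$ with distinct images, so no common point exists; the content of the lemma is exactly that this injectivity cannot fail.
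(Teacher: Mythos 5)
Your setup and the involution bookkeeping are fine (in particular the reduction to $\dim A=1$, $r'(3)=5$, $s'(3)=2$, and the observation that $p\notin X$), but the proof has two genuine gaps. First, your ``general-position fact'' is not established: from $\Span E'\not\subset H''$ you conclude that the three planes ``fill $\PP^5$, and their triple intersection is empty,'' but spanning does not imply empty intersection. In vector-space terms take $V_1=\langle e_1,e_2,e_3\rangle$, $V_2=\langle e_1,e_4,e_5\rangle$, $V_3=\langle e_1,e_6,e_2+e_4\rangle$: the three corresponding $2$-planes meet pairwise exactly in the point $[e_1]$, have triple intersection $[e_1]\neq\emptyset$, and still span $\PP^5$. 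What your hyperplane-section argument rules out is that $\Pi_3$ lies inside $H''$, which is much weaker than what you need; indeed, once $\Pi_1\cap\Pi_2$ is a point, a dimension count forces $\Pi_3\cap H''$ to contain a line in any case, and nothing you say prevents that line from passing through $\Pi_1\cap\Pi_2$. (This flaw only kills the ``exactly two-to-one'' claim; rationality of $C$ already follows from $\gamma=\gamma\circ\iota$.) Second, and fatally, your ``crux'' is not a smaller step --- it \emph{is} the lemma. Since general pairwise span intersections are single points, the triple $(D_1,E,3D-D_1-E)$ has nonempty common intersection if and only if $\gamma(E)=\gamma(\iota E)$; so proving that $\gamma$ does not factor through $\iota$ is literally equivalent to the statement being proved, and everything before it is repackaging. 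The sketch you offer for this step is not a proof: the ruled-surface argument leaves its key assertions unverified (``would force''), it is unclear that the hypotheses of Lemma \ref{lem:ramification_argument} (a degree $2d$ map to $\PP^1$ contracting every divisor of $A$, with $E$ and $2D-E$ in the same fiber) can be arranged, and the closing ``equivalently'' remark is circular, since the Debarre--Fahlaoui structure on $X$ is the conclusion of Theorem \ref{thm:main-debarre-fahlaoui}, whose proof uses this very lemma.

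The paper's proof is a two-line projection argument that bypasses all of this. Choose $E$ so that $D,D_1,D_2,E$ is a general quadruple, and project from $\Span_{|3D|'}E$ (which does not contain the hypothesized common point $P$). Then $P$ maps to a common point of the three \emph{lines} $\Span_{|3D-E|}D_i$ in the plane $|3D-E|\simeq\PP^2$ (here $r(2)=2$ is used), and the generality of the quadruple $(D_1,D_2,D_3,3D-E)$ rules out the concurrency of these three lines. The key move you missed is to project from a fourth, \emph{independently} general divisor span: this transports the question into the $r(2)=2$ plane, where the relation $D_1+D_2+D_3\sim 3D$ no longer ties the configuration together, because the plane's linear system $|3D-E|$ is unrelated to $3D$ when $E$ is general.
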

\begin{proof}
Suppose the spaces $\Span_{|3D|'} D_i$ share a common point $P$. Consider a divisor $E$ such that $D, D_1, D_2, E$ is a general quadruple; then $D_1, D_2, D_3, 3D-E$ is a general quadruple as well and $P$ does not belong to $\Span_{|3D|'} E$. Consider the projection $\pi_E$ from $\Span_{|3D|'}E$. We have \[\pi_E(P) \in \pi_E(\Span_{|3D|'}D_i)=\Span_{|3D-E|} D_i.\] By the generality of the quadruple $D_1, D_2, D_3, 3D-E$, the lines $\Span_{|3D-E|} D_i$ do not share a point, contradiction. 
\end{proof}

We now prove the main result of this section.
\begin{theorem}\label{thm:main-debarre-fahlaoui}
Suppose we are in Setup \ref{main_setup} and $X$ is $d$-minimal. If $r(2) =2$, then $X$ is birational to a Debarre--Fahlaoui curve.
\end{theorem}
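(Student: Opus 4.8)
The plan is to make rigorous the heuristic sketched just before the theorem: the configuration of $2$-planes $\Span_{|3D|'}D'$ in $\pp^5 = |3D|'$, parametrized by $A$ and pairwise meeting in points (by Lemma~\ref{lem:rprime_3} and Lemma~\ref{lem:no-extra-intersecions}), should be exactly the family of planes spanned by the rulings of the Veronese-type image of $\Sym^2 A$ in $\pp^5$, and $X$ should sit on this image. I would first reduce to the case $\dim A = 1$ using Lemma~\ref{lem:dimg1}, so that $A$ is a positive-rank elliptic curve and the family of planes is a genuinely $1$-parameter family.

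The key construction is the rational map $\psi\colon \Sym^2 A \dashrightarrow \pp^5$ sending an unordered pair $\{D_1,D_2\}$ to the point $\Span_{|3D|'}D_1 \cap \Span_{|3D|'}D_2$, which is a single point for general $D_1,D_2$ since the two planes span a hyperplane in $\pp^5$. One must check this is well-defined: the intersection point depends only on the pair, and Lemma~\ref{lem:generically-not-in-hyperplane} guarantees that three planes $\Span D_1, \Span D_2, \Span D_3$ with $D_1+D_2+D_3 \sim 3D$ have empty common intersection, which is what prevents the naive definition from being inconsistent and forces the image to be a genuine surface rather than collapsing. The plan is then to identify the line bundle on $\Sym^2 A$ pulling back $\O(1)$: since $\psi$ is built from the linear system $|3D|'$ and the divisor class $H_x$ on $\Sym^2 A$ cuts out the degree-$d$ divisors on $X$ through $x$, I expect $\psi$ to be (up to the projection defining $|nD|'$) the map given by a multiple of $H$, identifying the image with $\Sym^2 A$ embedded appropriately. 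Intersection-theoretic bookkeeping on $\Sym^2 A \simeq \pp\EE$, using the relations $H^2=1$, $H\cdot F = 1$, $F^2=0$, then pins down the numerical class.

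Next I would show $X$ lies on (the image of) $\Sym^2 A$. Each point $P \in X$ lies on at least two divisors $D_1,D_2 \in A$ (Lemma~\ref{two-divisors-through-a-point}), and for a general such $P$ these meet only at $P$ (Lemma~\ref{lemma:two_divisors_one_point}); hence the image of $P$ in $\pp^5$ is $\Span D_1 \cap \Span D_2 = \psi(\{D_1,D_2\})$, so $P$ lies in $\psi(\Sym^2 A)$. As $P$ ranges over a dense subset of $X$ this shows $X \subset \overline{\psi(\Sym^2 A)}$, and I would then argue $\psi$ is birational onto its image so that $X$ pulls back to a curve $\tilde X \subset \Sym^2 A$ birational to $X$. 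Finally, to see $\tilde X$ is a Debarre--Fahlaoui curve I must verify its numerical class is $(d+m)H - mF$ for some $1 \leqslant m \leqslant d$: the condition $H \cdot \tilde X = d$ follows because the family $H_x$ restricts to the degree-$d$ divisors $D' \in A$ parametrizing our configuration (so $H_x \cdot \tilde X = \deg D' = d$), which forces the $H$-coefficient to exceed the $F$-coefficient by exactly $d$; the bound $m \leqslant d$ and $m \geqslant 1$ come from effectivity ($a+2b \geqslant 0$, $a \geqslant 0$) together with irreducibility of $X$.

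**The main obstacle** I anticipate is not the set-theoretic containment $X \subset \psi(\Sym^2 A)$, which follows cleanly from the two-divisors-through-a-point lemmas, but rather establishing that $\psi$ is a birational morphism onto a surface genuinely isomorphic (or at least birational) to $\Sym^2 A$, and then correctly computing the numerical class of the pullback curve $\tilde X$. The subtlety is that $\psi$ is only a rational map a priori, and one must rule out degenerate behavior (e.g.\ the image being a curve, or $\psi$ having positive-dimensional fibers) using the nondegeneracy supplied by Lemma~\ref{lem:generically-not-in-hyperplane}; and the intersection-number computation identifying the class as $(d+m)H-mF$ requires carefully matching the linear system $|3D|'$ (after projection from $V$) to the tautological bundle $H$ on $\pp\EE$, keeping track of the twist by $F$ that distinguishes $H_x$ from $H$.
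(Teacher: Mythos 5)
Your outline matches the paper's up to a point: reduction to $\dim A = 1$ via Lemma \ref{lem:dimg1}, the configuration of $2$-planes in $|3D|' \simeq \pp^5$ (Lemmas \ref{lem:rprime_3} and \ref{lem:no-extra-intersecions}), the rational map $\psi \colon \Sym^2 A \dashrightarrow \pp^5$, the containment of $X$ in the closure of its image, and the final intersection-theoretic identification of the class $(d+m)H - mF$. But there is a genuine gap at the heart of your plan, exactly where the paper has to work hardest. You propose to obtain $\tilde X \subset \Sym^2 A$ by proving $\psi$ is birational onto its image $S$ and pulling $X$ back. Even granting birationality of $\psi$, this does not give a curve birational to $X$: birationality controls the fibers of (a resolution of) $\psi$ only over a dense open subset of $S$, while $X$ is a codimension-one subset of $S$ and can perfectly well lie in the non-injectivity (equivalently, non-normal) locus of $\psi$. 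Concretely, by Lemma \ref{lem:spans_meets_divisor} the fiber of $\psi$ over a general $P \in X$ consists of all unordered pairs of divisors from $A$ through $P$; if a general $P$ lay on $k \geqslant 3$ such divisors, then $\psi^{-1}(X)$ would be a multisection of degree $\binom{k}{2}$ over $X$ rather than birational to it, and your class computation would concern the wrong curve. So the statement you actually need is that a general point of $X$ lies on \emph{exactly} two divisors from $A$. Nothing in your proposal addresses this, and it does not follow from Lemma \ref{lem:generically-not-in-hyperplane}, which constrains only the special triple $D_1, D_2, 3D - D_1 - D_2$; for the same reason that lemma is far from sufficient to establish birationality of $\psi$ in the first place.

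This ``exactly two divisors'' statement is the bulk of the paper's proof. The paper studies, for fixed general $D_1$, the morphism $\eta \colon A \to \Span D_1$ sending $D_2 \mapsto \Span D_1 \cap \Span D_2$, and shows $\eta$ must embed $A$ as a plane cubic by eliminating three cases: $\deg \eta(A) = 2$ (a hyperplane-span argument); $\eta$ multiple-to-one onto a curve of degree $\geqslant 3$ (projection from a general line produces three pairs of skew lines forming the edges of a tetrahedron, ruled out by a $2$-torsion computation on $A^0$); and $\eta$ birational onto a curve of degree $\geqslant 4$ (ruled out using the classification of small Sylvester--Gallai configurations, the Hesse configuration, and plane-curve genus estimates). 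Only then does the paper define the morphism $\mu \colon X \to \Sym^2 A$, $P \mapsto (D_1, D_2)$, which is automatically birational onto its image since $\psi \circ \mu$ is generically the identity on $X$ --- note the paper never needs, and never proves, that $\psi$ itself is birational. Two smaller inaccuracies in your sketch: the equality $[\mu(X)] \cdot H = d$ requires a transversality argument (comparing $H_x$ and $H_{x'}$ at a general point $(x,x')$ of $\mu(X)$) to rule out multiplicities in $H_x \cap \mu(X)$; and the lower bound $m \geqslant 1$ does not come from effectivity or irreducibility, but from $d$-minimality: $[\mu(X)] \cdot F = d+m$ is the degree of the induced map $X \to A$, which must exceed $d$.
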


\begin{proof}
By  Lemma \ref{lem:dimg1}, we have \(\dim A = 1 \). By Lemma \ref{lem:rprime_3}, we have \(r'(3) = 5\). Let $D$ be a point of $A(k)$ achieving these generic values, so that the linear systems \(|nD|\) are basepoint-free for \(n \geqslant 2\), \(\dim |2D| = 2,\) and \(\dim |3D|' = 5\).  

Write $\varphi \colon X \to \pp^5$ for the morphism associated to $|3D|'$. 
We will now define a rational map from $\Sym^2 A$ to $\pp^5$, whose image contains $X$ in its closure.  Since $s'(3)=r'(3)-r(2)-1 = 2$, a general divisor  $[D_1] \in A$ has $2$-dimensional span.

   Given a general pair of divisors $[D_1], [D_2] \in A$, the spans $\Span_{|3D|'}D_1,$ $\Span_{|3D|'}D_2$ belong to a common hyperplane (by the abelian translate property \eqref{atp}). This means that for a general pair of divisors $[D_1], [D_2] \in A$, we must have $\dim \Span_{|3D|'} D_1 \cap \Span_{|3D|'} D_2 \geqslant 0$. Since by Lemma \ref{lem:no-extra-intersecions} there exists a pair of divisors with zero-dimensional intersection of spans, by semicontinuity we have in general $\dim \Span_{|3D|'} D_1 \cap \Span_{|3D|'} D_2 = 0$.
   
This yields a rational map
    \begin{align*}
        \psi \colon \Sym^2 A &\dashrightarrow \pp^5\\
        \qquad (D_1, D_2) & \mapsto \Span D_1  \cap \Span D_2.
    \end{align*}

If \(\Span D_1 \cap \Span D_2\) has dimension \(1\), then projecting from \(\Span(D_1 + D_2)\) yields a (geometric) degree \(d\) map \(X \to \pp^1\); in particular, by Setup \ref{main_setup}, the divisor \([3D - D_1 - D_2] \in A\) lies in a proper Zariski closed (dimension at most \(0\)) locus.  From this we observe 
\begin{multline} \label{positive-dim-intersections}
\text{For general \(D_1\), there exist finitely many lines \(\Sigma\) in \(\Span D_1\), such that for any \(P \not \in \Sigma\)},\\ \text{if \(D_2 \neq D_1\) and \(P \in \Span D_1 \cap \Span D_2\), then \(P = \Span D_1 \cap \Span D_2\).}
\end{multline}

    If the divisors \(D_1\) and \(D_2\) share a point, then the intersection of \(\Span D_1 \) and \(\Span D_2\) necessarily contains that point.  The closure of the image of \(\psi\) contains the image of \(X\) under \(\varphi\) since by Lemma \ref{lem:no-extra-intersecions} for a  general point \(P\) of \(X(\bar{k})\), there exist divisors \(D_1\) and \(D_2\) such that       
    \[ \Span D_1 \cap \Span D_2 = P.\]

    Our goal is to show that a general point $P \in X$ is contained in \emph{exactly} two divisors. Once we do so, we will have a natural map $X \to \Sym^2 A$, and we will then show via a simple argument that the image is indeed a Debarre--Fahlaoui curve. Fix a general divisor class $D_1$. We first analyze the image of the morphism $\eta: A \to \Span D_1$ that sends a divisor $D_2$ to $\Span D_2  \cap \Span D_1$. This map is 
    nondegenerate by Lemma \ref{lem:no-extra-intersecions}. 
 We will show that it must be the inclusion of \(A\) as a plane cubic curve by considering several cases based on the possible degrees of the image of \(\eta\).
\medskip

\noindent
\textbf{\boldmath Case 1: \(\deg\eta(A) =2\).} 
Consider divisors $D_2, D_3$ such that $D_1, D_2, D_3$ form a general triple. Let $P\colonequals \Span D_2\cap \Span D_1$ and $Q \colonequals \Span D_3 \cap \Span D_1$. Consider the divisor $D_4 \colonequals 3D-D_2-D_3$; since $D_1, D_2, D_3$ are a general triple we can assume that $D_4$ does not pass through $P$ or $Q$.
\begin{center}
\begin{tikzpicture}
\draw (0,0) -- (4,0) -- (5.5,1.25) -- (1.5,1.25) -- (0,0);
\draw (2.65,.55) ellipse (1cm and .3cm);
\filldraw (1.65, .55) circle[radius=0.05];
\draw (1.65, .55) node[left] {\(P\)};
\filldraw (3.65, .55) circle[radius=0.05];
\draw (3.65, .55) node[right] {\(Q\)};
\draw [thick, fill=gray!30] (1.65, .55) -- (1.25,3) -- (2.65,3) -- (1.65, .55);
\draw [thick, fill=gray!30] (3.65, .55) -- (3.65+.4,3) -- (2.65,3) -- (3.65, .55);
\draw [thick, fill=gray!30] (2.65,.85) -- (1.65+.4,.55+.98) -- (3.65-.4,.55+.98) --(2.65,.85);
\draw (5.1,.5) node{\(D_1\)};
\draw (1,2.5) node{\(D_2\)};
\draw (4.3,2.5) node{\(D_3\)};
\draw (2.65,1.8) node{\(D_4\)};
\end{tikzpicture}

\end{center}
On the other hand, $\Span(D_2, D_3)$ is a hyperplane that contains $\Span D_4$ and intersects $\Span D_1$ in the line $\overline{P Q}$.  Since \(\overline{PQ}\) meets \(\eta(A)\) only at \(P\) and \(Q\), this is a contradiction.
\medskip

\noindent
\textbf{\boldmath Case 2: \(\deg \eta(A)\geqslant 3\) and \(\eta\) multiple-to-one onto its image.} 

Choose a general line $\ell$ in $\Span D_1$ that intersects the image of $\eta$ in at least three smooth points $P_2, P_3, P_4$. 
Through each of the points $P_2, P_3, P_4$ passes at least two divisor-spans $\Span D_i, \Span D_i'$, $i=2,3,4$. 
Since the line \(\ell\) is general, the pair \(D_1, D_2\) is a general pair of divisors in \(A \times A\).  Hence, using the fact that \(\eta\) is nondegenerate, we can assume that the point where they meet is not in the finite set \(\Sigma\) guaranteed by \eqref{positive-dim-intersections} on either \(\Span D_1\) or \(\Span D_2\).  In particular
\[\Span D_1 \cap \Span D_2 = \Span D_1 \cap \Span D_2' = \Span D_2 \cap \Span D_2' = P_2.\]
By symmetry the same holds for \(i = 3\) and \(i=4\).

\begin{center}
\begin{tikzpicture}[scale=0.7]
	\begin{pgfonlayer}{nodelayer}
		\node [style=none] (0) at (-6, -1) {};
		\node [style=none] (1) at (-5, 3) {};
		\node [style=none] (2) at (2, 3) {};
		\node [style=none] (3) at (1, -1) {};
		\node [style=none] (4) at (-6, -1) {};
		\node [style=none] (5) at (-5.25, 1.5) {};
		\node [style=none] (6) at (1.25, 0.5) {};
		\node [style=none] (7) at (-4.25, 0.25) {};
		\node [style=none, label={below: $P_2$}] (8) at (-3.75, 1.25) {};
		\node [style=none] (9) at (-2.5, 1.75) {};
		\node [style=none] (10) at (-2.25, 1) {};
		\node [style=none] (11) at (-2, 0) {};
		\node [style=none] (12) at (-1, 0.75) {};
		\node [style=none] (13) at (0.5, 1.75) {};
		\node [style=none] (14) at (-5, 5) {};
		\node [style=none] (15) at (-3, 5) {};
		\node [style=none] (16) at (-4, 5.5) {};
		\node [style=none] (17) at (-2, 5.25) {};
		\node [style=none, label={above:$P_3$}] (18) at (-1.75, 0.75) {};
		\node [style=none, label={below:$P_4$}] (19) at (-0.5, 0.5) {};
		\filldraw (-3.75, 1.27) circle[radius=0.05];
		\filldraw (-1, .84) circle[radius=0.05];
		\filldraw (-2.23, 1.03) circle[radius=0.05];
		\draw (-4.7, 5.3) node{\(D_2\)};
		\draw (-3.8, 5.8) node{\(D_2'\)};
		\draw (7) node[left]{\(\eta(A)\)};
		\draw (6) node[above left]{\(\ell\)};
		\draw (2.2, 1.5) node {\(D_1\)};
	\end{pgfonlayer}
	\begin{pgfonlayer}{edgelayer}
		\draw (4.center) to (1.center);
		\draw (1.center) to (2.center);
		\draw (2.center) to (3.center);
		\draw (3.center) to (4.center);
		\draw (5.center) to (6.center);
		\draw [bend left, looseness=0.75] (7.center) to (8.center);
		\draw [bend left, looseness=0.75] (8.center) to (9.center);
		\draw [bend left=60] (9.center) to (10.center);
		\draw [bend right=45] (10.center) to (11.center);
		\draw [bend right=60, looseness=1.25] (11.center) to (12.center);
		\draw [bend left=60, looseness=0.75] (12.center) to (13.center);
		\draw[thick, fill=gray!30, opacity=0.7] (8.center) to (16.center) to (17.center) to (8.center);
		\draw[thick, fill=gray!30, , opacity=0.7] (14.center) to (8.center) to (15.center) to (14.center);
	\end{pgfonlayer}
\end{tikzpicture}

\end{center}

By Lemma \ref{lem:generically-not-in-hyperplane} applied to the triple $D, D_1, D_i$ we may choose \(D_i' \neq 3D - D_1 - D_i\).  
Since \(D_1, D_2\) is general, for every possible \(D_2'\) through \(P_2\), the divisor
$D_2-D_2'$ is not a $2$-torsion point on $A^0$, as we now explain.  Equivalently, for a general \(D_2\), and each of the finitely many possible \(2\)-torsion points \(T\) on \(A^0\) giving rise to \(D_2' = D_2 - T\), a general divisor span \(\Span D_1\) does not meet \(\Span D_2 \cap \Span D_2'\), which is clear since the the map \(\eta\) associated to \( D_2\) is nondegenerate.

We may further assume that the $15$ pairwise intersection points of $\Span D_i, \Span D_j'$ do not belong to $X$ (if \(i \neq j\) then \(D_i, D_j'\) are a general pair of divisors; if \(i=j\), then \(P_i\) is a general point on \(\eta(A)\) which contains only finitely many points of \(X\)).

Projection from \(\ell\) maps each \(D_i\) and \(D_i'\) for \(i=2,3,4\) to a line.
Since \(D_i' \neq 3D - D_1 - D_i\), we have that $\Span(D_i + D_i')$, for $i=2,3,4$, does not contain \(D_1\).  Further, since $\ell$ is a general line in \(\Span(D_1)\) through \(P_i\), we also have that $\Span(D_i + D_i')$ does not contain \(\ell\).
Since in addition \(\Span D_i\) and \(\Span D_i'\) meet only at \(P_i\), their images under projection from \(\ell\) are skew.  Moreover, since \(\Span D_i \cap \Span D_j'\) is a point off of \(\ell\) for \(i \neq j\), any two such lines meet.

The only such configuration of a triple of pairs of skew lines $\pi_\ell(\Span D_i), \pi_\ell(\Span D_i')$ is the configuration of edges of a tetrahedron. 

\begin{center}
\begin{tikzpicture}[scale=0.7]
	\begin{pgfonlayer}{nodelayer}
		\node [style=none] (0) at (-3, 0) {};
		\node [style=none] (1) at (2, 0) {};
		\node [style=none] (2) at (0, 5) {};
		\node [style=none] (3) at (0, 1) {};
		\node [left] (4) at (-1.8, 2) {$D_2$};
		\node [above] (5) at (0.75, 0.65) {$D_2'$};
		\node [below] (6) at (-0.5, 0) {$D_4$};
		\node [style=none] (7) at (0, 2.75) {};
		\node [left] (8) at (0, 2.5) {$D_4'$};
		\node [style=none] (9) at (-1.5, 0.5) {};
		\node [above] (10) at (-1.5, 0.5) {$D_3'$}; 
		\node [right] (11) at (1.25, 2) {$D_3$}; 
	\end{pgfonlayer}
	\begin{pgfonlayer}{edgelayer}
		\draw[thick] (0.center) to (2.center);
		\draw[thick] (2.center) to (3.center);
		\draw[thick] (3.center) to (0.center);
		\draw[thick] (3.center) to (1.center);
		\draw[thick] (1.center) to (2.center);
		\draw[thick] (0.center) to (1.center);
	\end{pgfonlayer}
\end{tikzpicture}
\end{center}
The triples of divisors corresponding to faces of the tetrahedron sum to $3D$, since they are coplanar. Summing the faces containing a shared edge and subtracting the two faces containing the opposite edge, we get $2(D_i-D_i')=0$ for all $i$, contradicting the generality assumption $D_2-D_2' \not\in A[2]$. 
\medskip

\noindent
\textbf{\boldmath Case 3: \(\deg \eta(A) \geqslant 4\) and \(\eta\) birational onto its image.} 
We proceed as before by choosing a general line $\ell \in \Span D_1$ and analyzing the projection from $\ell$. 
    Let $P_2, ..., P_n$, $n \geqslant 5$ be the points of $\ell \cap \eta(A)$. Since $\ell$ is general there is a unique divisor-span $\Span D_i$ through $P_i$. Projection from $\ell$ maps the $\Span(D_i)$ into a collection of lines in $\PP^3$ pairwise sharing points.  Since they cannot all belong to the same plane, by Lemma \ref{lem:linear-algebra} they have to share a point $P$. 
        
        Consider two divisors $D_i, D_j$ for $i,j>2$ and let $D_{ij}\colonequals 3D-D_i-D_j$ be the remaining divisor contained in the hyperplane $\Span(D_i, D_j)$.  Since \((D_1, D_i, D_j)\) is general, \(D_{ij} \neq D_1\); equivalently, \(D_1\) is not contained in $\Span(D_i, D_j)$.  In particular \(\Span(D_i, D_j) \cap \Span D_1 = \ell\), and so \(\Span D_{ij}\cap\Span(D_1) \in \ell\).  Thus $D_{ij}=D_k$ for some index $k$.   Projecting the configuration of lines $\pi_\ell(\Span(D_i))$ from $P$ gives a configuration of $n-1 \geqslant 4$ points $Q_2, ..., Q_n \in \PP^2$ with the following two properties:
        \begin{enumerate}
            \item\label{Sylvester-Gallai} for any two distinct points $Q_i, Q_j$ there exists a point $Q_k,$ $k \neq i,j$ collinear with $Q_i, Q_j$;
            \item no more than $3$ of $Q_i$ are collinear.
        \end{enumerate}
        Configurations of points satisfying Property (\ref{Sylvester-Gallai}) are known as Sylvester--Gallai configurations; see \cite[Theorems 3.1 -- 3.6]{kelly1973affine} for classification results for small values of $n$. In particular, either $n-1=9$ and the configuration is  Hesse configuration (and the points are a base locus for a pencil of cubic curves) or $n-1 \geqslant 12.$

We first show that for general choices of \(D_1\) and \(\ell\), the \(2\)-plane \(\Lambda \colonequals \pi_\ell^{-1}(P)\) does not meet \(X\).  Indeed, suppose to the contrary that \(\Lambda\) meets \(X\) in \(\lambda\) points for a general choice of \(D_1\) and \(\ell\).  Fixing \(D_1\) and varying \(\ell\), we obtain a map
\[(\PP^2)^\vee \dashrightarrow \Sym^\lambda X.\]
If this map is nonconstant, then \(\lambda \geqslant d+1\) since \(X\) is \(d\)-minimal.  Since \(\Span D_1\) meets \(X\) in \(d\) points, none of which are on \(\ell = \Lambda \cap \Span D_1\), we must have that \(\Span(D_1, \Lambda)\) meets \(X\) in at least \(2d + 1\) points.  If \(D_1\) and \(\ell\) are defined over the ground field, then so is the unique point \(P\), and hence so is the \(3\)-plane \(\Span(D_1, \Lambda)\).  Projection from this plane defines a degree at most \(d-1\) map to \(\pp^1\), contradicting \(d\)-minimality.

We may therefore assume that the plane \(\Lambda\) meets \(X\) in points independent of \(\ell\).  If it meets \(X\) in at least \(2\) distinct points in \(\pp^5\), then their span meets \(\Span D_1\) in a unique point, which does not lie on a general line \(\ell\).  Hence \(\Lambda\) meets the image of \(X\) in \(\pp^5\) in a unique point (possibly with multiplicity).  Varying \(D_1\), and noting that a \(d\)-minimal curve cannot have genus at most \(1\), we see that this unique point must also be independent of \(D_1\).  This is a contradiction, since a general pair of divisors \(D_1, D_2\) can be chosen so that their span \(\Span(D_1, D_2)\) misses any specific point.

        Consider the projection $\pi_\Lambda\colon X \to \PP^2$, and suppose that it factors as $X \to Y \to Y' \subset \PP^2$, where $X \to Y$ is a finite degree $s$ morphism of smooth curves and $Y \to Y'$ is birational. The plane curve $Y'$ has degree $3d/s$ and every one of the
        points $Q_2, ..., Q_n$ is a singular point of multiplicity at least $d/s$. Moreover, the point $Q_1\colonequals \pi_\Lambda(\Span D_1))$ is also a singular point of $Y'$ of multiplicity at least $d/s$.
        
        Suppose $n-1=9$. Then the configuration of points $Q_2, ..., Q_{10}$ is a Hesse configuration, in particular there is a pencil of cubics through $Q_2,..., Q_{10}$. We may therefore choose $Q$ to be a cubic though $Q_2, ..., Q_{10}$ and $Q_1$. The curve $Y'$ is not a cubic, since it has at least $10$ singular points, and so $Y' \cap Q$ is a finite scheme. However $Q$ intersects $Y'$ in at least $10$ points of multiplicity $d/s$, thus the total multiplicity of the intersection is at least $10d/s > 9d/s=\deg Q \deg Y'$, contradiction.
        
        Suppose $n-1 \geqslant 12$. Then the geometric genus of $Y'$ is at most 
        \[g_{Y'} \leqslant \frac{(3d/s - 1)(3d/s -2 )}{2} - 13 \frac{d/s(d/s-1)}{2}=1+ 2 \frac{d}{s} - 2\frac{d^2}{s^2}.\]
        Since $Y'$ has at least $13$ singular points, the degree of $Y'$ is at least $5$, and so $g_{Y'}<0$, contradiction.

\medskip

Thus the map $\eta\colon A \to \Span D_1$ is an isomorphism onto a plane cubic curve. This means that for every point $P \in D_1$ there is exactly one divisor $D_2 \neq D_1$, with $[D_2] \in A$ that contains $P$. Since $D_1$ was an arbitrary general divisor, we conclude that for a general point $P \in X$ there exist exactly two divisors $D_1,D_2$ from $A$ that contain $P$. Therefore we can define a birational morphism $\mu\colon X \to \Sym^2 A$ that sends a point $P \in X$ to the unique pair of divisors $(D_1, D_2) \in \Sym^2 A$ that contain $P$. We claim that $\mu$ is the birational equivalence of $X$ with a Debarre--Fahlaoui curve that we seek. To do this we need to identify the numerical class of $\mu(X)$ on $\Sym^2 A$. 

For $x \in A$, recall that \(H_x\) is the set of all pairs of divisors in \(\Sym^2 A\) that contain \(x\).  Hence
 the intersection $H_x \cap \mu(X)$ is supported on the points \(\mu(\supp(x))\).
 Since a general divisor \(x \in A\) is a degree \(d\) point (and hence a single monodromy orbit), $H_x \cap \mu(X)$ is a multiple of \(\mu(\supp(x))\). Consider a general point \((x, x')\) on \(\mu(X)\).
 Since the divisors $H_x$, and $H_{x'}$ for \(x \neq x'\) intersect transversely at $(x,x') \in \Sym^2 A$, the intersections \(H_x \cap \mu(X)\) and \(H_{x'} \cap \mu(X)\) cannot both be nontransverse at \((x, x')\).
 Hence the generic intersection $H_x \cap \mu(X)$ cannot consist of multiple points.  In other words, for a general $x \in A$ the intersection $H_x \cap \mu(X) \subset \Sym^2 A$ is smooth and \(H_x \cap \mu(X) = \mu(\supp(x))\), so $[\mu(X)]\cdot H=d$. Therefore, numerically, $[\mu(X)]=aH+(d-a)F$. Since $[\mu(X)]$ is effective, by the description of the effective cone we have $a \geqslant 0$ and $2d-a \geqslant 0$. The fibers of the addition map $\Sym^2 A \to A$ have numerical class $F$, and since $X$ does not admit maps of degree less than $d$ to $A$, we conclude that $d<[\mu(X)]\cdot F=a$. Thus $[\mu(X)]=(d+m)H-mF$ for some $m$ between $1$ and $d$ as claimed.
\end{proof}

\begin{remark}\label{rem:gonality-2}
As observed in Remark \ref{rem:gonality-1}, the geometric gonality of a curve with $\min(\dendegs(X/k))=d$ which is not a degree $d$ cover of an elliptic curve is at most $2d-1$. Theorem \ref{thm:main-debarre-fahlaoui} implies that if, in addition, such curves are not Debarre--Fahlaoui curves, then their geometric gonality is at most $2d-2$.
\end{remark}

\section{Applications and Extensions}\label{sec:Applications}
\subsection{Low degree points on projective curves}\label{sec:special-curves}
Our main strategy can be applied to study low degree points on  ``special'' curves. The geometry of configurations of divisor spans as summarized in Section \ref{sec:construction-summary} can be used to estimate the dimensions of various linear systems from below. Combining this with Castelnuovo's bound yields a bound on the genus of the curve. We now recall Castelnuovo's theorem \cite[Chapter III, page 116]{ACGH}. Given positive integers $\delta , n$, write
\[\delta-1 = m (n-1) + \varepsilon,\] for  integers $m$ and $0 \leqslant \varepsilon < n-1.$ Then the genus of a nondegenerate curve of degree $\delta$ in $\PP^n$ is bounded by 
\begin{equation}\label{Castelnuovo-function}
    \pi(\delta,n) = \frac{m(m-1)}{2}(n-1) + m\varepsilon.
\end{equation} For fixed $n$ and large $\delta$, the genus bound \(\pi(\delta, n)\) is roughly $\delta^2/(2n-2)$. 
\begin{theorem}\label{thms:special-curves}
Suppose $X \subset \PP^r$ is an irreducible (possibly singular) curve of degree $e$ and genus $g$. Suppose $X$ has infinitely many points of degree $d$ not contained in hyperplanes of $\PP^r$. Then 
\[g \leqslant \pi(e+2d, 2r+1).\]
\end{theorem}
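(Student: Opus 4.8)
The plan is to produce, from the hypothesis, a birational model of $X$ as an irreducible nondegenerate curve of degree at most $e+2d$ in a projective space of dimension at least $2r+1$, and then to invoke Castelnuovo's bound together with the monotonicity of $\pi(\delta,n)$ (nondecreasing in $\delta$, nonincreasing in $n$): once the image lives nondegenerately in $\PP^N$ with $N \geqslant 2r+1$, has degree $e+2d$ and geometric genus $g$, we get $g \leqslant \pi(e+2d,N) \leqslant \pi(e+2d,2r+1)$.

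First I would organize the degree $d$ points into a positive-dimensional family, since genericity arguments are unavailable over a merely countable set. The infinitely many degree $d$ points give infinitely many $k$-points of $\Sym^d X$; composing with Abel--Jacobi, either their classes span finitely many positive-dimensional linear systems, so that $X$ carries a $g^1_d$ and is a degree $\leqslant d$ cover of $\PP^1$ (a case treatable by the same construction with a pencil in place of the family), or else their image in $\Pic^dX$ is infinite and, by Faltings' theorem \cite{Faltings1991} exactly as in Section~\ref{sec:Mordell-Lang}, lies densely in a positive-dimensional abelian translate $A \subset \Pic^d X$. Since the condition that the support of a degree $d$ divisor spans $\PP^r$ is Zariski open and holds on a dense subset of $A$, the general member $D \in A$ is an effective degree $d$ point whose support spans $\PP^r$.

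Next, writing $L \colonequals \O_X(1)$ (of degree $e$) and fixing a general disjoint pair $D_1,D_2 \in A$, I would analyze the complete linear system of $\mathcal{M} \colonequals L(D_1+D_2)$, of degree $e+2d$. Birationality of the associated map $\phi_{\mathcal M}$ is the easy part: the inclusion $H^0(L) \hookrightarrow H^0(\mathcal M)$ given by multiplication by the canonical section of $D_1+D_2$ realizes the given embedding $X \hookrightarrow \PP^r$ as a linear projection of $\phi_{\mathcal M}$, and as that projection is already generically injective, so is $\phi_{\mathcal M}$. Hence the image is an irreducible nondegenerate curve in $\PP^{\dim|\mathcal M|}$ of degree $e+2d$ and geometric genus $g$.

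The crux, and the step I expect to be the main obstacle, is the dimension bound $\dim|\mathcal M| \geqslant 2r+1$, equivalently $h^0(\mathcal M) \geqslant 2r+2$. Since $\mathcal M(-D_1-D_2)=L$ and $h^0(L)\geqslant r+1$, this amounts to showing that $D_1+D_2$ imposes at least $r+1$ conditions on $|\mathcal M|$, i.e. $\dim\Span_{\mathcal M}(D_1+D_2) \geqslant r$. I would emphasize that this cannot be deduced from the spanning of the $D_i$ in $|L|$ alone: projecting $|\mathcal M|$ to $|L|$ places all of $\Span_{\mathcal M}(D_1+D_2)$ in the center of the projection, so spanning in $|L|$ yields only the tautological identity $\dim\Span_{\mathcal M}(D_1+D_2) = \dim|\mathcal M| - h^0(L)$; and by geometric Riemann--Roch \cite{ACGH} the extra sections of $L(D_1)$ beyond $H^0(L)$ are governed by how $D_1$ meets the adjoint system $|K-L|$, not by $|L|$ itself. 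This is exactly why two divisors (hence $e+2d$ rather than $e+d$) are used. The idea is to run the subspace-configuration analysis of Section~\ref{sec:subspace-configurations} for the twisted systems $|L+nD|$ in place of $|nD|$, exploiting the abelian-translate property of $A$ (Section~\ref{sec:abelian-translate}): for $D_3 \in A$ with $D_4 \colonequals D_1+D_2-D_3 \in A$ effective, the divisors $\operatorname{div}(v)+D_3+D_4$ (for $v$ in $|L|$ and $D_3$ varying) fill out a large subfamily of $|\mathcal M|$, and the configuration bounds should force $\dim\Span_{\mathcal M}(D_1+D_2)$, and hence $h^0(\mathcal M)$, up to the required value. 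Making this lower bound rigorous---transferring the spanning information across the twist by $L$---is the delicate point. With it in hand, the image of $\phi_{\mathcal M}$ is a nondegenerate curve of degree $e+2d$ in $\PP^N$ with $N \geqslant 2r+1$, and Castelnuovo's bound followed by monotonicity of $\pi$ gives $g \leqslant \pi(e+2d,2r+1)$.
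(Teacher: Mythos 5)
You have correctly identified the paper's overall reduction: twist $\O_X(1)$ by the moving degree $d$ points, show the resulting complete linear system has dimension at least $2r+1$, and conclude with Castelnuovo's bound plus monotonicity of $\pi$. But the step you yourself flag as the crux --- the bound $\dim \Span_{\mathcal{M}}(D_1+D_2) \geqslant r$, equivalently $h^0(\mathcal{M})\geqslant 2r+2$ --- is exactly the mathematical content of the theorem, and your proposal does not prove it. Deferring it to ``the subspace-configuration analysis of Section \ref{sec:subspace-configurations} for the twisted systems $|L+nD|$'' cannot be repaired by citation: Proposition \ref{extra-intersections} and Corollary \ref{cor:s(n)-grows} are proved under Setup \ref{main_setup} for a \emph{$d$-minimal} curve, for the untwisted systems $|nD|$, and under condition \eqref{dagger}; none of these hypotheses is available here (the present theorem makes no minimality or gonality assumption whatsoever), and no substitute argument is sketched. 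So there is a genuine gap at the central step.

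The missing idea is far more elementary than what you propose, and --- contrary to your claim --- it does come down to the spanning of $D$ in $|L|$, once combined with basepoint-freeness of the \emph{residual} system. The paper works with a single degree $d$ point $D$ (so with $|2D+H|$ rather than $|D_1+D_2+H|$), chosen not to lie in any hyperplane and so that $|2D|$ is basepoint free; this holds for all but finitely many of the given points, because either $D$ moves in a pencil (and then $|D|$ itself is basepoint free: its base locus is a Galois-stable divisor strictly contained in the single Galois orbit $D$, hence empty), or $[D]$ lies on a positive-dimensional abelian translate, where one argues as in Proposition \ref{prop:elliptic-exceptions}. Now let $H$ be the hyperplane class. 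For any $H'\in |H|$, the residual system of $H'$ in $|2D+H|$ is $|2D|$; basepoint-freeness gives the set-theoretic identity $X \cap \Span_{2D+H}H' = H'$. If $\Span_{2D+H}D$ had dimension $s<r$, choose $s+1\leqslant r$ points of $D$ spanning it and a hyperplane $H'\subset \PP^r$ through those points; then $\Span_{2D+H}D \subseteq \Span_{2D+H}H'$, hence $D \subseteq X\cap\Span_{2D+H}H' = H'$, contradicting that $D$ lies in no hyperplane. Therefore $s\geqslant r$, and since $h^0(2D+H) = s+1+h^0(D+H)$ with $h^0(D+H)\geqslant h^0(H)\geqslant r+1$, we get $\dim|2D+H|\geqslant 2r+1$; your Castelnuovo-plus-monotonicity endgame then finishes the proof. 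Note that your ``tautological identity'' objection only rules out the naive projection argument, not this one, which transfers the nondegeneracy hypothesis across the twist precisely because spans of hyperplane divisors meet $X$ only where they must.
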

\begin{proof}
By the Mordell--Lang Conjecture, as explained in Section \ref{sec:Mordell-Lang},
for all but finitely many degree $d$ points $D$ on $X$, either $D$ moves in a pencil, or the class of $D$ in $W_dX$ belongs to a translate of an abelian subvariety in $\Pic^d_X$. In either of those cases, the class $2D$ is basepoint-free.

Let $[H]$ denote the divisor class corresponding to the embedding $X \subset \PP^r$, choose a degree $d$ point $D$ for which $2D$ is basepoint-free and such that $D$ is not contained in divisors from $|H|$. Consider the linear system $|2D+H|$. Since $2D$ is basepoint-free, for a divisor $H' \in |H|$ we have $X \cap \Span_{|2D+H|} H'=H'$. Suppose the dimension of $\Span_{|2D + H|} D$ is equal to $s$ and choose a set $S \subset D$ of size $s+1$ such that $\Span_{|2D+H|} S = \Span_{|2D+H|} D$. If $s<r$, then there exists a divisor $H' \in |H|$ that contains $S$. Then $\Span_{|2D+H|} H'\supset \Span_{|2D+H|} S = \Span_{|2D+H|} D$, and so the points of $D$ are contained in \(X \cap \Span_{|2H + D|}H'=H'\), contradicting our assumption. Therefore $s\geqslant r$. Since the projection from $\Span_{|2D+H|} D$ maps to a space of dimension at least $r$, we have $\dim |2D+H| \geqslant 2r+1.$ By Castelnuovo's theorem applied to the embedding $|2D+H|$, the genus of $X$ is at most $\pi(e+2d, 2r+1)$.
\end{proof}

\subsection{Genus estimate for non-Debarre--Fahlaoui curves}
The argument of Theorem \ref{thm:weak-genus-bound} with the added assumption
 $r(2)>2$  gives a better genus bound for currves that satisfy \eqref{dagger}. Together Theorems \ref{thm:weak-genus-bound}, \ref{thm:main-debarre-fahlaoui}, \ref{2D-is-birational} and \ref{thm:genus-non-DF} below yield Theorem \ref{main-finer} from the introduction.
\begin{theorem}\label{thm:genus-non-DF}
Suppose $X$ is a $d$-minimal curve with $r(2)>2$. Suppose that condition \eqref{dagger} holds. Then the genus $g$ of $X$ satisfies \[g \leqslant \frac{(d-1)(d-2)}{2}+2\].
\end{theorem}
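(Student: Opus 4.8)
The plan is to run the proof of Theorem~\ref{thm:weak-genus-bound} for curves satisfying \eqref{dagger}, but to exploit the extra hypothesis $r(2)>2$ to improve the starting point of the inductive lower bound on the dimensions $r(n)=\dim|nD|$. Fix a general $D\in A(k)$ realizing the generic values of $r(n)$, $s(n)$, and $s'(n)$, so that the systems $|nD|$ are basepoint-free and birationally very ample for $n\geq 2$ by Theorem~\ref{2D-is-birational}.

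First I would upgrade the base case of the growth estimate. Since $\gon_k X>d$, a general degree $d$ divisor $D$ moves in a zero-dimensional linear system, so $r(1)=0$ and hence $s(2)=r(2)-1$. The hypothesis $r(2)>2$ therefore yields $s(2)\geq 2$, which is one better than the bound $s'(2)\geq 1$ used in the proof of Theorem~\ref{thm:nD_lower_bound}. Feeding $s(2)\geq 2$ into Corollary~\ref{cor:s(n)-grows} (whose hypothesis \eqref{dagger} is in force) and inducting exactly as in Theorem~\ref{thm:nD_lower_bound}, I obtain the shifted estimate
\[
s(n)\geq \min(n,\,d-1)\qquad\text{for }2\leq n\leq d,
\]
in place of $s(n)\geq\min(n-1,d-1)$. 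Concretely, each of $s(2),\dots,s(d-1)$ gains $1$, while the top value $s(d)\geq d-1$ is unchanged.

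Summing via $r(d)=\sum_{n=2}^{d}\bigl(s(n)+1\bigr)$ then improves the bound of Theorem~\ref{thm:nD_lower_bound} by exactly $d-2$, giving
\[
r(d)\geq \frac{d(d+1)}{2}+d-3=\frac{d^2+3d-6}{2}.
\]
To convert this into a genus bound I would argue that $|dD|$ is nonspecial: it is birationally very ample of degree $d^2$, and since $2r(d)>d^2$ while any nondegenerate special curve in $\pp^{r(d)}$ has degree at least $2r(d)$, speciality is impossible. Riemann--Roch then gives $\dim|dD|=d^2-g$, so
\[
g\leq d^2-r(d)\leq d^2-\frac{d^2+3d-6}{2}=\frac{(d-1)(d-2)}{2}+2,
\]
as claimed.

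I expect the only real subtlety to be the bookkeeping in the inductive step: one must verify that the improved base case $s(2)\geq 2$ propagates through the $\min$ in Corollary~\ref{cor:s(n)-grows} to $s(n)\geq\min(n,d-1)$ without eroding the gain, and that the accumulated gain of $1$ per index $n=2,\dots,d-1$ is exactly the $d-2$ needed to replace the weak bound $\tfrac{d(d-1)}{2}+1$ by $\tfrac{(d-1)(d-2)}{2}+2$. The nonspeciality argument and the closing Riemann--Roch computation are routine and identical in spirit to Theorem~\ref{thm:weak-genus-bound}; all of the genuine geometric input resides in the earlier configuration lemmas feeding Corollary~\ref{cor:s(n)-grows}.
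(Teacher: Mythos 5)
Your proposal is correct and follows essentially the same route as the paper: the key input in both is that $r(2)>2$ forces $s(2)=r(2)-1\geqslant 2$, which under \eqref{dagger} propagates through Corollary \ref{cor:s(n)-grows} to $s(n)\geqslant \min(n,d-1)$, improving the bound of Theorem \ref{thm:nD_lower_bound} by one at each stage. The only difference is the final conversion to a genus bound: the paper applies Castelnuovo's theorem to $|(d-1)D|$ (where $r(d-1)\geqslant \frac{d(d+1)}{2}-3$), while you go one step further to $|dD|$ and use Clifford's theorem (nonspeciality) plus Riemann--Roch --- a variant the paper itself notes is interchangeable in the proof of Theorem \ref{thm:weak-genus-bound}, and both yield exactly $\frac{(d-1)(d-2)}{2}+2$.
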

\begin{proof}
The argument is identical to the proof of Theorem \ref{thm:weak-genus-bound}. We have $r(2) \geqslant 3$, $s(2)\geqslant 2$ by assumption and $s(n) \geqslant \min(d-1, s(n-1)+1)$ by Proposition \ref{extra-intersections}. Therefore for $n\leqslant d - 1$ we have \[r(n) \geqslant \frac{(n+1)(n+2)}{2}-3.\]
By Castelnuovo's theorem applied to the linear system $|(d-1)D|$ we get the desired genus bound.
\end{proof}
\subsection{Classification results for low values of \texorpdfstring{$d$}{d}}\label{sec:small-airr}

We now summarize what the main results say about curves with small minimum density degree.

\begin{proposition}\label{prop:small-numbers}
The following Table \ref{tab:classification} summarizes the classification of curves \(X\) of genus \(g\) with small values of \(\min(\dendegs(X/k))\) or \(\min(\potdendegs(X/k))\).  We use the following shorthand:
\begin{itemize}
\item ``covers": a degree \(d\) cover of \(\pp^1\) or a (positive rank) elliptic curve
\item ``DF": a normalization of a Debarre--Fahaloui curve
\end{itemize}
\end{proposition}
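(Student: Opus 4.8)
The plan is to read the table off of the structural theorems already established, organising the argument by the value of \(d\) and by whether \(X\) is \(d\)-minimal. I would first dispose of curves that are not \(d\)-minimal. By the definition of minimality such a curve admits a cover \(\phi\colon X \to Y\) of degree \(e \geqslant 2\) with \(\airr_k Y = d/e\), and pulling back the degree \(d/e\) points of \(Y\) realises \(X\) as a degree \(e\) cover. For \(d \in \{2,3,4,5\}\) the factorisations of \(d\) are short enough that unwinding this (using the \(d=2\) classification to handle the intermediate step \(4 = 2\cdot 2\)) always lands on a degree \(d\) cover of \(\pp^1\) or a positive-rank elliptic curve, which is the ``covers'' column of Table \ref{tab:classification}.

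For \(d\)-minimal curves I would stratify by \(r(2) = \dim |2D|\). Since a \(d\)-minimal curve is not a degree \(d\) cover of the positive-rank elliptic curve \(A\) of Setup \ref{main_setup}, Proposition \ref{prop:elliptic-exceptions} gives \(r(2) \geqslant 2\), and Theorem \ref{thm:no_2_min} rules out \(d = 2\) altogether. When \(r(2) = 2\), Theorem \ref{thm:main-debarre-fahlaoui} identifies \(X\) with a Debarre--Fahlaoui curve of some numerical class \((d+m)H - mF\); adjunction on \(\Sym^2 A\) together with Riemann--Roch on the degree-\(2d\) class \(2D\) then fixes the genus and forces \(m\) (for \(d = 3\) one checks that \(2D\) is nonspecial for general \(D\), hence \(g = 4\) and \(m = 1\), which is case (c) of Theorem \ref{main-low-degrees}). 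When \(r(2) > 2\), the genus is bounded by Theorems \ref{thm:weak-genus-bound} and \ref{thm:genus-non-DF}, sharpened for odd \(d = 5\) by Lemma \ref{genus-fix-2}; one obtains \(g \leqslant 3\) for \(d = 3\), \(g \leqslant 5\) for \(d = 4\), and \(g \leqslant 8\) for \(d = 5\).

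To convert these genus bounds into the classification I would invoke the two-sided gonality estimate
\[d = \airr_{\bar{k}} X \leqslant \gon_{\bar{k}} X \leqslant \left\lceil \tfrac{g+2}{2} \right\rceil,\]
the first inequality because a gonality pencil pulls back infinitely many low-degree points and the second being the Brill--Noether bound. In the genus ranges above the right-hand side equals \(d\), so the gonality is pinched to exactly \(d\) and \(X_{\bar{k}}\) acquires a degree-\(d\) map to \(\pp^1\); combined with the \(r(2) = 2\) (Debarre--Fahlaoui) and non-minimal cases this yields parts (3) and (4) of Theorem \ref{main-low-degrees}. For \(d \leqslant 3\) the Debarre--Fahlaoui curves are themselves geometrically trigonal, so only ``covers'' survives, matching Corollary \ref{cor:3-min}. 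Throughout the geometric statements one works over a finite extension \(L\) with \(\airr_L X_L = d\), chosen so that \(\gon_L X_L > d\) whenever \(\gon_{\bar{k}} X > d\), so that Setup \ref{main_setup} applies.

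The step I expect to be the main obstacle is the purely arithmetic case (b), namely \(d = 3\) with \(r(2) > 2\), where the conclusions must be descended to \(k\) rather than merely produced geometrically. Here \(2 \leqslant g \leqslant 3\) by Lemma \ref{lem:genus-at-least-2} and the genus bound; a geometrically hyperelliptic curve (including all of genus \(2\)) carries a Galois-stable, hence \(k\)-rational, pencil \(g^1_2\), contradicting \(\gon_k X > 3\), so \(X\) is a canonically embedded smooth plane quartic over \(k\). A \(k\)-rational point would give a degree-\(3\) projection to \(\pp^1\), again contradicting \(\gon_k X > 3\), so \(X(k) = \emptyset\); a cubic point exists because \(\airr_k X = 3\); and \(\Jac X\) has positive rank because that is precisely the Mordell--Lang hypothesis underlying Setup \ref{main_setup}. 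Finally, the converse realisability of the Debarre--Fahlaoui entries of the table---that such curves genuinely attain the stated invariant---is supplied by Proposition \ref{prop:DFs-are-minimalish}.
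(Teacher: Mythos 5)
Your overall route---splitting off non-minimal curves via Theorem \ref{thm:no_2_min}, bounding the genus of $d$-minimal curves by stratifying on $r(2)$ (Theorems \ref{thm:main-debarre-fahlaoui}, \ref{thm:genus-non-DF}, \ref{thm:weak-genus-bound}, Lemma \ref{genus-fix-2}), and converting genus bounds into the classification through gonality---is essentially the paper's proof, and your geometric column (pinching $d = \airr_{\kbar}X \leqslant \gon \bar{X} \leqslant \lfloor (g+3)/2\rfloor$) matches the paper's use of the geometric gonality bound.

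The genuine gap is in the arithmetic rows for $d=4$ and $d=5$. The table does not assert merely $g\leqslant 5$ (resp.\ $g \leqslant 8$) for $d$-minimal non-Debarre--Fahlaoui curves; it asserts $g \in \{4,5\}$ (resp.\ $g\in\{5,6,7,8\}$), so the genera below $d$ must be excluded, or folded into ``covers,'' \emph{over $k$}. Your only mechanism for eliminating small genera is the Brill--Noether pinching, which produces a $g^1_d$ over $\kbar$ and hence says nothing about the arithmetic column, and your arithmetic descent is carried out only for $d=3$. As written, your argument does not rule out, say, a $4$-minimal curve of genus $3$ or a $5$-minimal curve of genus $4$, either of which would falsify the table. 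The paper closes this with its second gonality estimate: over $k$ one always has $\gon_k X \leqslant 2g-2$ (project the canonical model, which is defined over $k$, from a $k$-rational linear center), while $\gon_k X \geqslant \airr_k X = d$; for $d=4,5$ and $g=2,3$ this yields either a contradiction or $\gon_k X = d$, i.e.\ ``covers.'' For the one case this crude bound misses, $(d,g)=(5,4)$, Riemann--Roch finishes: a degree-$d$ point $D$ on a curve of genus $g \leqslant d-1$ satisfies $h^0(D)\geqslant 2$, and since $D$ is an effective $k$-rational divisor the pencil $|D|$ is an honest $k$-rational linear system, so again $\gon_k X \leqslant d$ and $X$ is a cover. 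Some argument of this kind is indispensable to get the stated genus lists.

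A secondary, fixable imprecision: in your $d=3$ analysis, ``Galois-stable, hence $k$-rational, pencil $g^1_2$'' glosses over the Brauer obstruction---a Galois-fixed class in $\Pic^2\bar{X}$ a priori gives only a degree-$2$ map to a conic over $k$. It is harmless here because $X$ carries a cubic point, whose image on that conic is a point of odd degree, so the conic is $\pp^1$ by Springer's theorem (this is exactly how the paper argues in the paragraph following the proposition); but the step as you state it is not literally correct.
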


\begin{proof}

If \(X\) is not \(d\)-minimal, then it is a cover of an \(s\)-minimal curve for some \(s \mid d\).  Since there are no \(2\)-minimal curves by Theorem \ref{thm:no_2_min}, and in our cases $d \leqslant 5$, we can assume that \(X\) is either \(d\)-minimal or a cover of a \(1\)-minimal curve (i.e., \(\pp^1\) or an elliptic curve of positive rank).   From now on, we assume that \(X\) is \(d\)-minimal.  If \(X\) is not a Debarre--Fahlaoui curve, but satisfies the condition \eqref{dagger}, then by Theorem \ref{thm:genus-non-DF}, the genus of \(X\) is at most \((d-1)(d-2)/2 + 2\). If \eqref{dagger} does not hold, then the genus bound from Theorem \ref{thm:weak-genus-bound} applies. Finally, when $d=5$ the genus of $X$ is bounded by the Castelnuovo function $\pi(20, 12)=8$ by Lemma \ref{genus-fix-2}.

Any curve of genus \(g\) has geometric gonality at most \(\lfloor (g+3)/2 \rfloor\) and gonality at most \(2g - 2\).  Combining this with the genus bounds described above gives the result.
\end{proof}

\begin{table}[ht!]
\scalebox{0.95}{
\begin{tabular}{c | c | c | c| c}
\(d\) & \(2\) & \(3\) & \(4\) & \(5\) \\ \hline
\(\min(\potdendegs) = d\) & covers & covers & covers + DF & covers + DF\\
\(\min(\dendegs) = d\) & covers & covers  + DF + \(g=3\)& covers + DF + \(g=4,5\) & covers + DF + \(g = 5, 6, 7, 8\)\\
\end{tabular}}
\vspace{5pt}
\caption{\label{tab:classification}The classification of curves with small minimum density degree}
\end{table}

In the case \(d=3\) and \(g=3\), a \(3\)-minimal curve \(X\) cannot be hyperelliptic, since any conic with a degree \(3\) point is isomorphic to \(\pp^1\).
Hence a \(3\)-minimal curve \(X\) of genus \(3\) is isomorphic to a smooth plane quartic.  Since the gonality of a plane quartic is \(3\) if and only if it has a rational point, we see that \(X\) must be pointless. In this case, \(\Sym^3 X(k) = \Pic^3_X(k)\), and so the Jacobian of \(X\) must have positive rank.  Conversely, any such curve with a single degree \(3\) point which is not a degree \(3\) cover of an elliptic curve (i.e., with simple Jacobian) is \(3\)-minimal.
Combining this with Proposition \ref{prop:small-numbers} proves Theorem \ref{main-low-degrees}.

\section{Questions and Problems}\label{sec:questions}

\subsection{Geometric questions}\label{sec:geometric-questions}

All of the questions we consider have a geometric analogue, that applies to curves $X$ over any field $k$, and concerns the existence of abelian translates in $W_d \bar{X}$. The resulting geometric questions are usually slightly easier then the arithmetic ones.

Given a curve \(X\) over any field \(k\), 
the union of all positive-dimensional abelian translates in \(W_d \bar{X}\) is the \defi{Kawamata--Ueno locus} \(\Ueno(W_d \bar{X})\). 
When \(k\) is a number field, the Mordell--Lang conjecture implies that 
\[\min(\potdendegs(X/k)) = \min(\gon(\bar{X}), \min(d : \Ueno(W_d \bar{X}) \neq \emptyset)).\]
However, this more general definition makes sense for complex curves.  It is thus natural to define the locus \(Z_d(g)\) of curves \([X ]\in M_g(\cc)\) with \(\min(\gon(\bar{X}), \min(d : \Ueno(W_d \bar{X}) \neq \emptyset)) = d\).  For \(d \geqslant \lfloor (g+3)/2 \rfloor\), the locus \(Z_d(g)\) is all of \(M_g(\cc)\).  The general theory of Kawamata--Ueno loci \cite[Theorem 1.2]{McQuillan1994} implies that $Z_d(g)$ is the set of complex points a subvariety of $M_g$.  For lower values of $d$ it is interesting to study the ubiquity of curves in \(Z_d(g)\).

\begin{question}
For \(d < \lfloor (g+3)/2 \rfloor\), what is the codimension of \(Z_d(g)\) in \(M_g\)?
\end{question}

\medskip

A geometrically \(d\)-minimal curve $X$ is a curve in \(Z_d(g(X))\) for which there does not exist a degree $s \geqslant 2$ covering of curves $X \to Y$ with $Y \in Z_{d/s}(g(Y))$. 
The next specific case in which we don't know the classification of geometrically \(d\)-minimal curves is \(d=6\) and \(g=11\).  
\begin{question}
Do there exist geometrically \(6\)-minimal curves of genus \(11\)?
\end{question}
Abramovich and Harris claimed \cite{Abramovich-Harris1991}[Theorem 2] that the genus of a geometrically $d$-minimal curve is bounded by $d(d-1)/2$; however the proof presented there is incomplete. The key to obtaining this bound is the inequality $s(n) \geqslant \min(s(n-1)+1, d-1)$, which we only prove in Corollary \ref{cor:s(n)-grows} under the assumption \eqref{dagger}. It seems likely that the genus bound holds without additional assumptions.

\begin{question}\label{que:no-daggers}
Do there exist geometrically $d$-minimal curves with genus larger than $d(d-1)/2$?
\end{question}

In the geometric situation, it is natural to treat $a=\dim A$ as an extra parameter together with \(d\) and \(g\). The key to the proof of the Main Theorem is Proposition \ref{extra-intersections} concerning the difference between the dimensions \(s(n)\) and \(s(n-1)\) of divisor spans in \(|nD|\) and \(|(n-1)D|\). It seems likely that such a bound can be strengthened to depend on $a$.

\begin{question}\label{que:geometric-dim-A}
If \(\dim A  = a\), is it true that when \(s(n) \leqslant d-2\), we have
\(s(n) - s(n-1) \geqslant a\) for \(n \geqslant 3\)?
\end{question}

If Question \ref{que:geometric-dim-A} has a positive answer, then one can obtained significant improvements on the genus bound for $d$-minimal curves with $a>1$. In particular it would imply that the $a=2$-family constructed by Debarre and Fahlaoui \cite{Debarre-Fahlaoui1993} achieves the largest possible genus. Such an estimate is claimed in \cite{Abramovich-Harris1991}, but the proof has a gap (as remarked in \cite{Debarre-Fahlaoui1993}).

\medskip

The problem of classifying curves in $Z_d(g)$ is interesting over any field $k$. The results of this paper use the assumption $\Char k = 0$ in a few places. For instance, Kani's version of de Franchis theorem (Theorem \ref{thm:kani}) requires a separability assumption in positive characteristic, the classification of small Sylvester--Gallai configurations used in the proof of Theorem \ref{thm:main-debarre-fahlaoui} is more complicated when $\Char k = 2,3$, and the monodromy argument in Lemma \ref{lem:dimg1} only works in characteristic zero. 

\begin{question}
Do the geometric analogues of our main Theorems \ref{thm:weak-genus-bound}, \ref{thm:main-debarre-fahlaoui} hold in positive characteristic?
\end{question}

\subsection{Arithmetic Questions}\label{sec:arithmetic-questions}

The smallest cases for which some questions are still left open concern $d$-minimal curves with $d=3$ and $g=3,4$. In both of these cases we expect that $3$-minimal curves exist. The case $d=g=3$ is discussed in Problem \ref{d=g-minimal}. In the case $d=3, g=4$, Proposition \ref{prop:small-numbers} implies that such a curve is a Debarre--Fahlaoui curve of class $4H-F$. However, a genus $4$ curve $X$ over $\kbar$ generically admits two maps of degree $3$ to $\PP^1$: the canonical embedding realizes $X$ as a complete intersection of a quadric and a cubic, and projections from rulings on the quadric give a pair of $g^1_3$'s. We expect that there exist Debarre--Fahlaoui curves in class \(4H-F\) for which these two maps are Galois conjugate, and that such Debarre--Fahlaoui curves are $3$-minimal.

It is in principle possible to verify this claim (if true) by exhibiting a specific curve on $\Sym^2 A$ and checking that the unique quadric containing the canonical curve is nonsplit (and independently verifying that it is not a triple cover of an elliptic curve), but such a computation is nontrivial in practice. We thus leave this question as a problem.

\begin{problem}
Show that a general curve in numerical class $4H-F$ on $\Sym^2 A$ is $3$-minimal.
\end{problem}

There is another natural source of low degree points on curves of genus $g$, as we now describe. Consider a general (in a non-technical sense) genus $g$ curve $X$ over a number field equipped with a degree $g$ point. The abelian variety $\Pic^g_X$ may have positive rank, at the same time it appears that there is no clear reason for $X$ to have maps to other curves or low gonality. If this is the case, then $\Sym^g X$, which is birational to $\Pic^g_X$, will have an infinite family of rational points. While we expect such curves to be abundant, we do not know if examples can be proved to exist, and thus leave this as a problem.
\begin{problem}\label{d=g-minimal}
Show that for every $d\geqslant 3$ there exists a $d$-minimal curve of genus $d.$
\end{problem}
The problem for $d=3$ (i.e., smooth plane quartics) is already interesting and should be computationally feasible. 
\medskip

Theorem \ref{thm:weak-genus-bound} shows that, under condition \eqref{dagger}, the genus of a $d$-minimal curve is bounded by $\frac{d(d-1)}{2}+1$; curiously, this number is exactly the (maximal) genus of a Debarre-Fahlaoui curve, and in Theorem \ref{thm:main-debarre-fahlaoui} we explained this conincidence by identifying curves with $r(2)=2$ with Debarre--Fahlaoui curves. Question \ref{que:geometric-dim-A} predicts a similar situation for $\dim A =2$:  the maximal genus for such a curve is $d^2/4 +1$, which is exactly the genus of $\dim A = 2$ examples constructed in \cite{Debarre-Fahlaoui1993}. We hope the this can also be explained in terms of geometry of configurations. 
\begin{question}
Is it true that a $d$-minimal curve with $\dim A = 2$ and $r(2)=3$ is birational to one of the curves constructed in \cite{Debarre-Fahlaoui1993}?
\end{question}

It would be very interesting to obtain better results for special curves; in particular, we do not expect Theorem \ref{thms:special-curves} to be close to optimal. One way to test optimality of such a genus bound is to compare it to known results in low dimension.
\begin{problem}
Suppose $X$ is a curve equipped with a $g^r_e$ linear system. Show that for a certain function $g(r,e,d)$ the following holds: if the genus of $X$ is larger than $g(r,e,d)$, and $X$ has infinitely many points of degree $d$, then all but finitely many of those points are contained in the divisors from $g^r_e$. Can the function $g(r,e,d)$ be such that the value of $g(1,e,d)$ implies Vojta's estimate \cite{Vojta1992generalization} for low degree covers of $\PP^1$, and the value of $g(2,e,d)$ implies the Debarre--Klassen theorem \cite{Debarre-Klassen1994} on smooth plane curves?
\end{problem}
\bibliographystyle{alpha}
\bibliography{bibliography}

\begin{thebibliography}{ACGH85}

\bibitem[ACGH85]{ACGH}
E.~Arbarello, M.~Cornalba, P.~A. Griffiths, and J.~Harris.
\newblock {\em Geometry of algebraic curves. {V}ol. {I}}, volume 267 of {\em
  Grundlehren der mathematischen Wissenschaften [Fundamental Principles of
  Mathematical Sciences]}.
\newblock Springer-Verlag, New York, 1985.

\bibitem[AH91]{Abramovich-Harris1991}
Dan Abramovich and Joe Harris.
\newblock Abelian varieties and curves in {$W_d (C)$}.
\newblock {\em Compositio Mathematica}, 78(2):227--238, 1991.

\bibitem[DF93]{Debarre-Fahlaoui1993}
Olivier Debarre and Rachid Fahlaoui.
\newblock Abelian varieties in {$W^r_d (C)$} and points of bounded degree on
  algebraic curves.
\newblock {\em Compositio Mathematica}, 88(3):235--249, 1993.

\bibitem[DK94]{Debarre-Klassen1994}
Olivier Debarre and Matthew~J. Klassen.
\newblock Points of low degree on smooth plane curves.:.
\newblock {\em J. Reine Angew. Math.}, 1994(446):81--88, 1994.

\bibitem[Fal91]{Faltings1991}
Gerd Faltings.
\newblock Diophantine approximation on abelian varieties.
\newblock {\em Annals of Mathematics}, 133(3):549--576, 1991.

\bibitem[Fre94]{frey1994}
Gerhard Frey.
\newblock Curves with infinitely many points of fixed degree.
\newblock {\em Israel Journal of Mathematics}, 85(1):79--83, 1994.

\bibitem[Har77]{Hartshorne}
Robin Hartshorne.
\newblock {\em Algebraic geometry}.
\newblock Graduate Texts in Mathematics, No. 52. Springer-Verlag, New
  York-Heidelberg, 1977.

\bibitem[Hin87]{Hindry1987}
Marc Hindry.
\newblock Points quadratiques sur les courbes.
\newblock {\em CR Acad. Sci. Paris}, 305:219--221, 1987.

\bibitem[HS91]{Harris-Silverman1991}
Joe Harris and Joe Silverman.
\newblock Bielliptic curves and symmetric products.
\newblock {\em Proceedings of the American Mathematical Society},
  112(2):347--356, 1991.

\bibitem[Kan86]{Kani1986}
Ernst Kani.
\newblock Bounds on the number of non-rational subfields of a function field.
\newblock {\em Inventiones mathematicae}, 85(1):185--198, 1986.

\bibitem[KN73]{kelly1973affine}
Leroy~Milton Kelly and Sonde Nwankpa.
\newblock Affine embeddings of {S}ylverter-{G}allai designs.
\newblock {\em Journal of Combinatorial Theory, Series A}, 14(3):422--438,
  1973.

\bibitem[Lev07]{levin2007}
Aaron Levin.
\newblock Vojta's inequality and rational and integral points of bounded degree
  on curves.
\newblock {\em Compositio Mathematica}, 143(1):73--81, 2007.

\bibitem[Lev16]{levin2016}
Aaron Levin.
\newblock Integral points of bounded degree on affine curves.
\newblock {\em Compositio Mathematica}, 152(4):754--768, 2016.

\bibitem[McQ94]{McQuillan1994}
Michael McQuillan.
\newblock Quelques compl\'ements \`a une d\'emonstration de {F}altings.
\newblock {\em C.R. Acad. Sci. Paris, Serie I}, 319:649 -- 652, 1994.

\bibitem[RY00]{RY00}
Zinovy Reichstein and Boris Youssin.
\newblock Essential dimensions of algebraic groups and a resolution theorem for
  {$G$}-varieties.
\newblock {\em Canad. J. Math.}, 52(5):1018--1056, 2000.
\newblock With an appendix by J\'{a}nos Koll\'{a}r and Endre Szab\'{o}.

\bibitem[ST01]{song-tucker2001}
Xiangjun Song and Thomas Tucker.
\newblock Arithmetic discriminants and morphisms of curves.
\newblock {\em Transactions of the American Mathematical Society},
  353(5):1921--1936, 2001.

\bibitem[SV20]{Smith-Vogt2020}
Geoffrey Smith and Isabel Vogt.
\newblock {Low Degree Points on Curves}.
\newblock {\em International Mathematics Research Notices}, 06 2020.

\bibitem[Voj91]{Vojta1991quadratic}
Paul Vojta.
\newblock Arithmetic discriminants and quadratic points on curves.
\newblock In {\em Arithmetic algebraic geometry}, pages 359--376. Springer,
  1991.

\bibitem[Voj92]{Vojta1992generalization}
Paul Vojta.
\newblock A generalization of theorems of {F}altings and
  {T}hue-{S}iegel-{R}oth-{W}irsing.
\newblock {\em Journal of the American Mathematical Society}, 5(4):763--804,
  1992.

\bibitem[VV24]{isolated_expository}
Bianca Viray and Isabel Vogt.
\newblock Isolated and parameterized points on curves.
\newblock \href{https://arxiv.org/pdf/2406.14353}{arXiv:2406.14353}, 2024.

\end{thebibliography}
\end{document}